\definecolor{rojo}{rgb}{1,0,0}
\theoremstyle{plain}
\newtheorem{thm}{Theorem}[section]
\newtheorem{lma}[thm]{Lemma}
\newtheorem{propdefn}[thm]{Proposition and Definition}
\newtheorem{prel}[thm]{}
\newtheorem{rem}[thm]{Remark}
\newtheorem{prop}[thm]{Proposition}
\newcommand{\qed}{\hfill\quad\fbox{\rule[0mm]{0,0cm}{0,0mm}}  \par\bigskip}
\newcommand{\w}{\hspace{-0,12cm}}
\newcommand{\ö}{\hspace{-0,03cm}}
\newcommand{\iso}{\cong}
\newcommand{\ot}{\otimes}
\newcommand{\M}{{\mathcal M}}
\newcommand{\YD}{{\mathcal YD}}
\newcommand{\R}{{\mathcal R}}
\newcommand{\C}{{\mathcal C}}
\newcommand{\D}{{\mathcal D}}
\newcommand{\F}{{\mathcal F}}
\newcommand{\G}{{\mathcal G}}
\newcommand{\A}{{\mathcal A}}
\newcommand{\B}{{\mathcal B}}
\newcommand{\E}{{\mathcal E}}
\def\S{{\mathcal S}}
\newcommand{\T}{{\mathcal T}}
\newcommand{\Ll}{{\mathcal L}}
\newcommand{\ch}{{\mathcal H}}
\newcommand{\U}{{\mathcal U}}
\newcommand{\Oo}{{\mathcal O}}
\def\K{{\mathcal K}}
\def\Dd{{\mathbb D}}
\newcommand{\crta}{\overline}
\newcommand{\ev}{\it ev}
\newcommand{\db}{\it db}
\newcommand{\Id}{\operatorname {Id}}
\newcommand{\alfa}{\alpha}
\newcommand{\ro}{\rho}
\newcommand{\Epsilon}{\varepsilon}
\newcommand{\id}{\it id}
\newcommand{\Aut}{\operatorname {Aut}}
\def\Z{{\mathcal Z}}
\def\r{{\mathfrak r}}
\newcommand{\tr}{\hspace{-0,08cm}\triangleright}
\newcommand{\tl}{\hspace{-0,08cm}\triangleleft}
\newcommand{\cref}[1]{C.~\ref{c:#1}}
\newcommand{\inlabel}[1]{\label{in:#1}}
\newcommand{\inref}[1]{~\w\ref{in:#1}}
\newcommand{\lelabel}[1]{\label{le:#1}}
\newcommand{\leref}[1]{Lemma~\ref{le:#1}}
\newcommand{\eqlabel}[1]{\label{eq:#1}}
\newcommand{\equref}[1]{(\ref{eq:#1})}
\newcommand{\prlabel}[1]{\label{pr:#1}}
\newcommand{\prref}[1]{Proposition~\ref{pr:#1}}
\newcommand{\rmlabel}[1]{\label{rm:#1}}
\newcommand{\rmref}[1]{Remark~\ref{rm:#1}}
\newcommand{\selabel}[1]{\label{se:#1}}
\newcommand{\seref}[1]{Section~\ref{se:#1}}
\newcommand{\sslabel}[1]{\label{ss:#1}}
\newcommand{\ssref}[1]{Subsection~\ref{ss:#1}}
\begin{document}

\title{{\bf Transparency condition in the categories of Yetter-Drinfel'd modules over Hopf algebras
in braided categories}}
\author{{\large B. Femi\'c \vspace{2pt}} \\
{\small Facultad de Ingenier\'ia, \vspace{-2pt}}\\
{\small  Universidad de la Rep\'ublica} \vspace{-2pt}\\
{\small  Julio Herrera y Reissig 565, \vspace{-2pt}}\\
{\normalsize 11 300 Montevideo, Uruguay}\\
{\normalsize e-mail: bfemic@fing.edu.uy}}

\date{June 10, 2011}
\maketitle

\vspace{-0.0cm}
\begin{abstract}
We study versions of the categories of Yetter-Drinfel'd modules over a Hopf algebra $H$
in a braided monoidal category $\C$. Contrarywise to Bespalov's approach, all our structures
live in $\C$. This forces $H$ to be transparent or equivalently to lie in M\"uger's center
$\Z_2(\C)$ of $\C$. We prove that versions of the categories of Yetter-Drinfel'd modules in
$\C$ are braided monoidally isomorphic to the categories of (left/right) modules over the
Drinfel'd double $D(H)\ö\in\C$ for $H$ finite. We obtain that these categories polarize into
two disjoint groups of mutually isomorphic braided monoidal categories. We conclude that if
$H\in\Z_2(\C)$, then ${}_{D(H)}\C$ embeds as a subcategory into the braided center category
$\Z_1({}_H\C)$ of the category ${}_H\C$ of left $H$-modules in $\C$. For $\C$ braided, rigid
and cocomplete and a quasitriangular Hopf algebra $H$ such that $H\in\Z_2(\C)$ we prove that
the whole center category of ${}_H\C$ is monoidally isomorphic to the category of left modules
over $\Aut({}_H\C)\rtimes H$ - the bosonization of the braided Hopf algebra $\Aut({}_H\C)$ which
is the coend in ${}_H\C$. A family of examples of a transparent Hopf algebras is discussed.
\end{abstract}

\vspace{0.2cm}
\indent \hspace{0.3cm} {\small 2000 Mathematics Subject Classification: 16T05, 18D10.}
\vspace{0.5cm}

{\bf Keywords:} {\small braided monoidal category, braided Hopf algebra, Yetter-Drinfel'd modules, center category.}

\setcounter{tocdepth}{1}

\renewcommand{\theequation}{\thesection.\arabic{equation}}

\section{Introduction}

Yetter introduced in \cite{Y} ``crossed bimodules'' generalizing to Hopf algebras the notion
of crossed modules over finite groups, which appeared in topology. These new objects are modules
and comodules over a Hopf algebra $H$ over a commutative ring with a certain compatibility condition.
In \cite{LR} they were used to generate solutions to the Yang-Baxter equation and accordingly were
called ``Yang-Baxter modules''. Yetter's construction and its variations were studied 
in \cite{RT} 
where they were termed  
Yetter-Drinfel'd structures. 
The initial Yetter's category is denoted by ${}_H ^H\YD$. 

For a finite-dimensional Hopf algebra $H$ Majid
proved that the category ${}_{D'(H)}\M$ of modules over the Drinfel'd double $D'(H)=H\bowtie H^{* op}$
is isomorphic to ${}_H ^H\YD$. 
In \cite[Proposition 2.4]{Rad3} the analogous result to the former is proved for the left-right
version of the Yetter-Drinfel'd category: ${}_{D(H)}\M\iso {}_H\YD^H$, where $D(H)=(H^{op})^*\bowtie H$.

Another, categorical interpretation of the Yetter-Drinfel'd categories is that they can be seen as the
{\em center} (or the {\em inner double}) of the category of modules over the Hopf algebra. The
center construction (which to any monoidal category assigns a braided monoidal category)
is a special case of Pontryagin dual monoidal category,
\cite{Maj1}. As observed by Drinfel'd \cite{Dp} and
proved in \cite[Example 1.3]{Maj4} and \cite[Theorem XIII.5.1]{K}
the left (resp. right) center of the category of left modules over $H$ is isomorphic to ${}_H ^H\YD$
(resp. ${}_H \YD^H$). 
For the details on the center construction we refer to \cite{K}.

In Radford biproduct Hopf algebra $B\times H$ \cite{Rad1},
Majid observed that $B$ is a Hopf algebra in the category ${}_H ^H\YD$. If $H$ is
quasitriangular, a left $H$-module $B'$ can be equipped with a left $H$-comodule structure in
such a way that one gets a Yetter-Drinfel'd module. In this particular case,
the Hopf algebra $B'\times H$ is named {\em bosonization} in \cite{Maj3}. The reversed process
- recovering a braided Hopf algebra out of an ordinary one -
was studied in \cite[Section 2]{Maj3} and is called {\em mutation}.

Yetter-Drinfel'd modules through their equivalence with Hopf bimodules, \cite{Sch},
emerge 
in Woronowicz's approach to bicovariant differential calculi on quantum groups, \cite{W}.
The first order differential calculi over a Hopf algebra $H$ over a field consist of a derivation
$d: H\to\Omega^1(H)$, where $\Omega^1(H)$ is the {\em bicovariant bimodule} and has a structure of a
Hopf bimodule. 
Another and exotic appearance of left-right Yetter-Drinfel'd modules we find in 3D-topological
quantum field theories, \cite[Theorem 3.4]{CY}.
\par\medskip

Some of the above-mentioned constructions have been generalized to any braided monoidal category.
For a Hopf algebra $H$ in a braided monoidal category $\C$ which admits split idempotents the
equivalence of the categories of Hopf bimodules 
and of Yetter-Drinfel'd modules $\YD(\C)_H ^H$ was proved in
\cite{BD1}. In the same paper the authors 
prove that the category of bialgebras in $\YD(\C)_H ^H$ is isomorphic to the category of admissible pairs
in $\C$. The proof relies on the previously generalized Radford-Majid theorems to the braided
case, \cite[Theorems 4.1.2 and 4.1.3]{Besp}. The former result provides a natural and easy
description for the Radford-Majid criterion for when a Hopf algebra is a cross product.
\par\medskip

In this paper we study categories of Yetter-Drinfel'd modules over a Hopf algebra $H$ in a braided
monoidal category $\C$ with a different approach than in \cite{Besp}. Moreover, we address the question
of their isomorphism with the categories of left and right modules over the Drinfel'd double in $\C$.
When studying the monoidal structures of the respective categories, one is tempted to impose the
symmetricitity of the base category $\C$ as a necessary condition. To avoid this obstacle, Bespalov
works in \cite{Besp} both with $\C$ and with its opposite and co-opposite categories, $\C^{op}$ and
$\C^{cop}$ respectively, and with a category $\crta\C$. The opposite category of $\C$ has the same
objects as $\C$, but the arrows go in the reversed order. The braiding in $\C^{op}$ is given by
$X\ot Y\stackrel{\Phi_{Y,X}}{\leftarrow} Y\ot X$, where $\Phi$ is the braiding of $\C$. The category
$\C^{cop}$ has reversed tensor product and the braiding
$X\ot^{cop} Y=Y\ot X\stackrel{\Phi_{Y,X}}{\to} X\ot Y=Y\ot^{cop} X$. The category $\crta\C$ has the
same tensor product and its braiding is $\Phi^{-1}$.
Contrarywise, 
in the present paper we work only with the base
category $\C$ and investigate which conditions we have to impose in order that the construction works.
We find that it is sufficient to require that the braiding $\Phi$ of $\C$ fulfills
$\Phi_{H,X}=\Phi_{X,H}^{-1}$ for every $X\in\C$. This condition we have encountered also in \cite{CF1}.
It had already appeared in the literature in \cite{Al1} and \cite[Definition 2.9]{M}.
In the terminology of the former reference we have that $H$ is {\em transparent}, while due to
the latter $H$ belongs to M\"uger's center
$\Z_2(\C)=\{X\in\C \vert \Phi_{Y,X}\Phi_{X,Y}=id_{X\ot Y} \hspace{0,2cm}\textnormal{for all} \hspace{0,2cm} Y\in\C\}$
of the braided monoidal category $\C$. The notation $\Z_1(\C)$ M\"uger reserved for the center of the monoidal
category $\C$ that we mentioned above. If $\Phi_{X,Y}=\Phi_{Y,X}^{-1}$ for some $X, Y\in\C$, we say that
{\em $\Phi_{X,Y}$ is symmetric}.

As a particular case of the bicrossproduct construction (with trivial coactions) in braided monoidal
categories, \cite{ZC}, we study the Drinfel'd double $D(H)$ of $H$ in $\C$. We obtain that
$D(H)=(H^{op})^*\bowtie H$ in $\C$ is a bicrossproduct Hopf algebra for finite $H$, if $\Phi_{H,H}$
is symmetric. 
Equivalent conditions for when $D(H)$ is (co)commutative are given. 
We prove that the category of modules over $D(H)$ in $\C$ is isomorphic to that of Yetter-Drinfel'd modules
over $H$ in $\C$ if $H$ is transparent. In particular, we get that the two diagrams
$$
\scalebox{0.84}{
\bfig
\putmorphism(-300,-160)(1,0)[{}_H\YD(\C)^{H^{op}}`{}^H\YD(\C)_{H^{cop}}`]{1000}1b
\putmorphism(650,330)(0,-1)[``]{480}1r
\putmorphism(-430,330)(1,0)[{}_{D(H)}\C`{}_H ^H\YD(\C)`]{1080}1a
\putmorphism(-350,330)(0,-1)[``]{460}1l
\putmorphism(-460,-160)(2,-1)[` `]{640}1b
\putmorphism(-570,-150)(2,-1)[`\YD(\C)^{H^{op}}_{H^{cop}}`]{700}0b
\putmorphism(330,-340)(2,1)[``]{90}1b
\put(110,60){\fbox{1}}
\efig}
\qquad\textnormal{ and}\qquad
\scalebox{0.84}{
\bfig
\putmorphism(-300,-160)(1,0)[{}_{H^{cop}}\YD(\C)^H`{}^{H^{op}}\YD(\C)_H`]{1000}1b
\putmorphism(620,330)(0,-1)[``]{480}1r
\putmorphism(-430,330)(1,0)[\C_{D(H)}`\YD(\C)_H ^H`]{1080}1a
\putmorphism(-380,330)(0,-1)[``]{460}1l
\putmorphism(-460,-160)(2,-1)[` `]{640}1b
\putmorphism(-570,-150)(2,-1)[` {}_{H^{cop}}^{H^{op}}\YD(\C)`]{700}0b
\putmorphism(330,-340)(2,1)[``]{90}1b
\put(80,60){\fbox{2}}
\efig}
$$
commute as arrows of mutually isomorphic braided monoidal categories. Our goal in this paper is not to
prove that all the above Yetter-Drinfel'd categories are braided monoidally isomorphic,
as it was proved in \cite[Corollary 3.5.5]{Besp} under the previously mentioned suppositions. Rather,
we set up a different approach and investigate how far we can get in the study of the above categories.

Bespalov proved in \cite[Proposition 3.6.1]{Besp} that the category of left-left (resp. right-right)
Yetter-Drinfel'd modules in $\C$ is braided monoidally isomorphic to a subcategory of the center of the
category of left $H$-modules (resp. right $H$-comodules). We differentiate the left and the right center
category and observe that the mentioned category isomorphism can be extended to the categories
in the rectangular diagrams $\langle 1\rangle$ and $\langle 2\rangle$ above yielding
two polarized groups of mutually isomorphic braided monoidal categories:
$$
\scalebox{0.84}{
\bfig
\putmorphism(-400,-130)(1,0)[\Z^{\C}_r({}_H\C)`\Z^{\C}_l({}^H\C)`]{1000}1b
\putmorphism(630,330)(0,-1)[``]{430}1r
\putmorphism(-430,330)(1,0)[\Z^{\C}_l({}_H\C)`\Z^{\C}_r({}^H\C)`]{1030}1a
\putmorphism(-380,320)(0,-1)[``]{420}1l
\efig}
\qquad\quad\textnormal{ and}\qquad
\scalebox{0.84}{
\bfig
\putmorphism(-400,-140)(1,0)[\Z^{\C}_r(\C^H)`\Z^{\C}_l(\C_H).`]{1020}1b
\putmorphism(660,330)(0,-1)[``]{450}1r
\putmorphism(-380,330)(1,0)[\Z^{\C}_r(\C_H)`\Z^{\C}_l(\C^H)`]{1000}1a
\putmorphism(-350,330)(0,-1)[``]{440}1l
\efig}
$$
As for the relation between the centers $\Z_1$ and $\Z_2$ in the notation of
M\"uger, we obtain in particular that if $H\in\Z_2(\C)$, 
then $
{}_{D(H)}\C \hookrightarrow \Z_{1, l}({}_H\C)$ (and similarly $
\C_{D(H)} \hookrightarrow \Z_{1, r}(\C_H)$). 

For the whole center category of a braided, rigid and cocomplete category $\C$ Majid proved
$\Z_{1, l}(\C)\iso\C_{\Aut(\C)}$ in \cite{Maj1a}, where $\Aut(\C)$ is the coend Hopf algebra in $\C$.
For 
a quasitriangular Hopf algebra $H\in\C$ \cite[Definition 1.3]{Maj2} such that
$H\in\Z_2(\C)$ we obtain $\Z_{1, l}(\C_H)\iso\C_{H\ltimes\Aut(\C_H)}$ as monoidal categories,
where $H\ltimes\Aut(\C_H)$ is the bosonization of the braided Hopf algebra $\Aut(\C_H)$
in $\C_H$. When $\C=Vec$ and $H$ is a finite-dimensional quasitriangular Hopf algebra, this
recovers the known isomorphism $\Z_l(\M_H)\iso\M_{D'(H)}$. We point out that a similar result to
ours was proved in \cite{Al} where the authors work with Hopf monads and construct a Drinfel'd
double in a fully non-braided setting.

At the end we present a family of transparent Hopf algebras in braided monoidal categories
which support our constructions.
\par\medskip

The paper is organized as follows. In \seref{prel} we present preliminaries on some structures
in any braided monoidal category $\C$. In the next section we study the braided monoidal category
of left-right Yetter-Drinfel'd modules ${}_H\YD(\C)^{H^{op}}$ (assuming that $H$ is transparent).
We point out that the categories ${}_H^H\YD(\C)$ and $\YD(\C)_H^H$ are  braided monoidal without any
symmetricity conditions on the braiding. \seref{bicruz} recalls the bicrossproduct construction
(with trivial coactions) in $\C$. We use it to study the Drinfel'd double $D(H)=(H^{op})^*\bowtie H$
in $\C$ for a finite $H$, when $\Phi_{H,H}$ is symmetric. \seref{YD-DH} is devoted to the braided monoidal
isomorphism ${}_{D(H)}\C\iso {}_H\YD(\C)^{H^{op}}$.
In \seref{other-cases} we compare different versions of the braided Yetter-Drinfel'd categories
in $\C$, connecting them with the categories of left and right modules over the Drinfel'd double
in $\C$. In the penultimate section we deal with the center construction and relate it to the Yetter-Drinfel'd
categories. The last section presents some examples.
\par\bigskip

{\bf Acknowledgements.} This work has partially been developed in the Mathematical Institute of
the Serbian Academy of Sciences and Arts in Belgrade (Serbia). The author wishes to thank to
Facultad de Ciencias de la Universidad de la Rep\'ublica in Montevideo for their worm
hospitality and provision of the necessary facilities.
My gratitude to Yuri Bespalov for clarifying me his proof of \cite[Proposition 3.6.1]{Besp},
and to Alain Brugui\`ere for the discussions on the construction of the Drinfel'd double via monads.

\section{Preliminaries} \selabel{prel}

We assume the reader is familiar with 
the theory of braided monoidal categories as well as with the notation of braided diagrams.
For the references we recommend \cite{K} and \cite{Besp}.
We recall that a Hopf algebra in a braided monoidal category $\C$ was introduced by Majid in \cite{Maj4}.
In the same paper it was proved that the categories of modules and comodules over a bialgebra in $\C$
are monoidal. We only outline some basic conventions. In view of Mac Lane's Coherence Theorem we
will assume that our braided monoidal category $\C$ is strict. Our braided diagrams
are read from top to bottom, the braiding $\Phi:X\ot Y\to Y\ot X$ and its inverse in $\C$ we denote by:
$$\Phi_{X, Y}=
\gbeg{1}{3}
\got{1}{X} \got{1}{Y} \gnl
\gbr \gnl
\gob{1}{Y} \gob{1}{X}
\gend
\quad\qquad\textnormal{and}\qquad
\Phi_{Y, X}^{-1}=
\gbeg{1}{3}
\got{1}{Y} \got{1}{X} \gnl
\gibr \gnl
\gob{1}{X} \gob{1}{Y.}
\gend
$$
For an algebra $A\in\C$ and a coalgebra $C\in\C$ 
the multiplication in the opposite algebra $A^{op}$ of $A$ and the comultiplication in the co-opposite
coalgebra $C^{cop}$ of $C$ we denote by:
$$\nabla_{A^{op}}=
\scalebox{0.86}{
\gbeg{3}{4}
\got{1}{A} \got{1}{A} \gnl
\gbr \gnl
\gmu \gnl
\gob{2}{A}
\gend} \qquad\textnormal{and}\qquad
\Delta_{C^{cop}}=
\scalebox{0.86}{
\gbeg{4}{4}
\got{2}{C} \gnl
\gcmu \gnl
\gibr \gnl
\gob{1}{C} \gob{1}{C}
\gend}
$$
respectively. The antipode $S$ of a Hopf algebra $H$ in $\C$ is a bialgebra map
$S: H\to H^{op, cop}$. Its compatibility with multiplication
and comultiplication is written as:
$$
\gbeg{2}{5}
\got{1}{H} \got{1}{H} \gnl
\gmu \gnl
\gvac{1} \hspace{-0,22cm} \gnot{S} \gmp \gnl \gnl
\gvac{1} \gcn{1}{1}{1}{1} \gnl
\gob{3}{H}
\gend=
\gbeg{2}{5}
\got{1}{H} \got{1}{H} \gnl
\gnot{S} \gmp \gnl \gnot{S} \gmp \gnl \gnl
\gbr \gnl
\gmu \gnl
\gob{2}{H}
\gend \qquad\textnormal{and}\qquad
\gbeg{2}{5}
\got{2}{H} \gnl
\gcmu \gnl
\gnot{S} \gmp \gnl \gnot{S} \gmp \gnl \gnl
\gcl{1} \gcl{1} \gnl
\gob{1}{H} \gob{1}{H}
\gend=
\gbeg{2}{5}
\got{2}{H} \gnl
\gvac{1} \hspace{-0,38cm} \gnot{S} \gmp \gnl \gnl
\gvac{1} \hspace{-0,22cm} \gcmu \gnl
\gvac{1} \gibr \gnl
\gvac{1} \gob{1}{H} \gob{1}{H}
\gend
$$
respectively. Moreover, $S$ is the antipode for $H^{op, cop}$.
Note that for a bialgebra $B\in\C$, neither $B^{op}$ nor $B^{cop}$ is a bialgebra, unless the
braiding $\Phi$ fulfills $\Phi_{B, B}=\Phi_{B, B}^{-1}$. \label{op,cop-bialg}
\par\medskip

We recall some basic facts.

\begin{prel}\inlabel{inner-hom}
A monoidal category $\C$ is called {\em right closed} if the functor $-\otimes M:
\C \rightarrow \C$ has a right adjoint, denoted by $[M,-]$, for all $M \in \C$. For $N\in\C$,
the object $[M, N]$ is called {\em inner hom-object}. The counit of the adjunction evaluated
at $N$ is denoted by $\ev_{M,N}:[M,N]\otimes M \rightarrow N$. It satisfies
the following universal property:
{\it for any morphism $f:T \otimes M \rightarrow N$ there is a unique morphism
$g:T \rightarrow [M,N]$ such that $f=\ev_{M,N}(g \otimes M)$}. If $f: N\to N'$ is a morphism in $\C$, then
$[M, f]:[M, N]\to [M, N']$ is the unique morphism such that $\ev_{M,N'}([M, f]\ot M)=f\hspace{0,1cm}\ev_{M,N}$.
The unit of the adjunction $\alfa: N\to [M, N\ot M]$ is induced by $\ev_{M,N\ot M}(\alpha\ot M)=id_{N\ot M}$.
A monoidal category $\C$ is called {\em left closed} if the functor $M\otimes -: \C \rightarrow \C$
has a right adjoint $\{M,-\}$ for
all $M\in\C$. The counit of this adjunction evaluated at $N\in\C$ is denoted by $\crta\ev_{M,N}:M\otimes \{M,N\}\rightarrow N$ and the unit by $\tilde\alpha: N\to \{M, M\ot N\}$. It obeys $\crta\ev_{M,M\ot N}(M\ot\tilde\alpha)=id_{M\ot N}$. When $\C$ is braided,
there is a natural equivalence of functors $[M, -]\iso \{M, -\}$ and $\C$ is right closed if and only
if it is left closed. Throughout we will write $[-, -]$ for both types of inner hom-bifunctors, the difference will be clear from the context.
The object $[M, M]$ is an algebra for all $M \in \C$.
\end{prel}

\begin{prel} \inlabel{r.adj. dual}
Let $P$ be an object in $\C$. An object $P^*\in\C$ together with a morphism
$e_P:P^*\ot P \to I$ is called a {\em left dual object} for $P$ if there exists a morphism
$d_P:I\to P\ot P^*$ in $\C$ such that $(P \otimes e_P)(d_P \otimes P)=id_P$ and
$(e_P \otimes P^*)(P^* \otimes d_P)=id_{P^*}$. The morphisms $e_P$ and $d_P$ are called
{\em evaluation} and {\em dual basis}, respectively. In braided diagrams the evaluation
$e_P$ and dual basis $d_P$ are denoted by: \vspace{-0,4cm}
$$e_P=
\gbeg{3}{1}
\got{1}{\hspace{0,12cm}P^*} \got{1}{\hspace{0,1cm}P} \gnl
\gev \gnl
\gob{1}{}
\gend
\qquad\textnormal{and}\qquad
d_P=
\gbeg{3}{3}
\got{1}{} \gnl
\gdb \gnl
\gob{1}{P} \gob{2}{\hspace{-0,2cm}P^*}
\gend$$
and the two identities they satisfy by:
\vspace{-0,7cm}
\begin{center} \hspace{-0,4cm}
\begin{tabular}{p{4.4cm}p{1.6cm}p{4.8cm}}
\begin{equation} \eqlabel{dbev=id}
\scalebox{0.9}[0.9]{
\gbeg{3}{4}
\got{1}{} \got{3}{P} \gnl
\gdb \gcl{1} \gnl
\gcl{1} \gev \gnl
\gob{1}{P}
\gend}=id_P
\end{equation} & &
\begin{equation} \eqlabel{evdb=id}
\scalebox{0.9}[0.9]{
\gbeg{3}{4}
\got{1}{P^*} \got{1}{} \gnl
\gcl{1} \gdb \gnl
\gev \gcl{1} \gnl
\gob{1}{} \gob{4}{\hspace{-0,1cm}P^*}
\gend}=\id_{P^*}.
\end{equation}
\end{tabular}
\end{center} \vspace{-0,7cm}
Symmetrically, one defines a right dual object ${}^*P$ for $P$ with morphisms $e'_P:P\ot {}^*P \to I$
and $d'_P:I\to {}^*P\ot P$. Left and right dual objects are unique up to isomorphism. In a braided
monoidal category the left and the right dual for $P$ coincide. The corresponding evaluation and dual basis
morphisms are related via:
\vspace{-0,7cm}
\begin{center} \hspace{-0,4cm}
\begin{tabular}{p{4.8cm}p{1cm}p{4.8cm}}
\begin{equation}\eqlabel{crta ev}
e_P'=e_P\Phi_{P^*, P}
\end{equation} & &
\begin{equation}\eqlabel{crta d}
d_P'=\Phi_{P, P^*}^{-1} d_P
\end{equation}
\end{tabular}
\end{center} \vspace{-0,7cm}
see e.g. \cite[Prop. 2.13, b)]{Tak2} (we take here the opposite sign of the first power of the braiding).
\end{prel}

\begin{prel}
An object $P\in\C$ is called {\em right finite}, if $[P,I]$ and $[P,P]$ exist
and the morphism $\db: P \ot [P,I] \to [P,P]$, called the {\em dual basis morphism} as well,
defined via the universal property of $[P,P]$ by $\ev_{P,P}(\db\ot P)=P\ot \ev_{P,I}$ is an
isomorphism. One may easily prove that {\it if $P$ is right finite, then $([P,I], e_P=\ev)$ is its left
dual}. The dual basis morphism is $d_P=db^{-1}\eta_{[P,P]}$, where $\eta_{[P,P]}$ is the unit for the
algebra $[P,P]$. A similar claim holds for a left finite object, which is defined similarly  as
a right finite object. In a braided monoidal category an object is
left finite if and only if it is right finite. If $P$ is a finite object, then so is $P^*$ and there is a natural isomorphism
$P\iso P^{**}$.
\end{prel}

\begin{prel} \inlabel{alg-coalg}
In the following we collect some facts about duality of Hopf algebras from
\cite[2.5, 2.14 and 2.16]{Tak2}.
Let $\C$ be a closed braided monoidal category.

(i) If $H$ is a coalgebra in $\C$, then $H^*:=[H, I]$ is an algebra.

(ii) If $H$ is a finite algebra in $\C$, then $H^*$ is a coalgebra.

(iii) If $H$ is a finite Hopf algebra in $\C$, then so is $H^*$.


We give here the structure morphisms. The finiteness condition in ii) and  iii)
is needed in order to be able to consider $H^*\ot H^*\iso(H\ot H)^*$, which allows to define a
comultiplication on $H^*$ using the universal property of $[H\ot H, I]$.
The multiplication, comultiplication, antipode $S^*$, unit and counit of $H^*$ are given by:
\vspace{-0,6cm} \begin{center} \hspace{-0,4cm}
\begin{tabular}{p{3.8cm}p{1cm}p{2,4cm}p{2cm}p{4cm}}
\begin{eqnarray} \label{multH*}
\scalebox{0.78}[0.78]{ \gbeg{4}{3}
\got{1}{H^*} \got{3}{H^*} \got{1}{\hspace{-0,66cm} H} \gnl \gwmu{3} \gcl{1} \gnl
\gvac{1} \gcl{1} \gcn{2}{1}{3}{1} \gnl \gvac{1} \gev \gvac{1} \gnl \gvac{2} \gob{2}{}
\gend} = \scalebox{0.78}[0.78]{ \gbeg{4}{3} \got{1}{H^*} \got{1}{H^*} \got{3}{\hspace{-0,38cm}H} \gnl
\gcl{2} \gcl{1} \gcmu \gnl \gvac{1} \gbr \gcl{1} \gnl \gev \gev \gnl \gvac{2} \gob{1}{} \gend}
\end{eqnarray} & & \begin{eqnarray} \label{codiagH*}
\scalebox{0.78}[0.78]{ \gbeg{4}{3}
\got{2}{H^*} \got{1}{H}\got{1}{H} \gnl \gcmu \gcl{1} \gcl{2} \gnl \gcl{1} \gbr \gnl \gev \gev \gnl
\gvac{2} \gob{1}{} \gend} = \scalebox{0.78}[0.78]{ \gbeg{4}{3} \got{1}{H^*} \got{1}{H} \got{3}{H} \gnl
\gcl{1} \gwmu{3} \gnl \gcn{2}{1}{1}{3} \gcl{1} \gnl \gvac{1} \gev \gvac{1} \gnl \gvac{2} \gob{2}{} \gend}
\end{eqnarray} & & \begin{eqnarray} \label{antipodeH*}
\scalebox{0.78}[0.78]{ \gbeg{2}{3}
\got{1}{H^*} \got{1}{H} \gnl \gnot{\hspace{0,12cm}S^*} \gmp \gnl \gcl{1} \gnl \gev \gnl \gob{1}{}
\gend} = \scalebox{0.78}[0.78]{ \gbeg{1}{3} \got{1}{H^*} \got{1}{H} \gnl \gcl{1} \gnot{S} \gmp \gnl \gnl
\gev \gnl \gvac{1} \gob{2}{} \gend} \end{eqnarray} \end{tabular} \end{center} \vspace{-1,6cm}
\begin{center} \hspace{-3,6cm} \begin{tabular}{p{3.44cm}p{0,44cm}p{7cm}} \begin{eqnarray} \label{unitH*}
\scalebox{0.78}[0.78]{ \gbeg{3}{3} \got{1}{} \got{1}{H} \gnl \gu{1} \gcl{1} \gnl \gev \gnl
\gvac{1} \gob{2}{} \gend} = \scalebox{0.78}[0.78]{ \gbeg{1}{3} \got{1}{H} \gnl \gcl{1} \gnl \gcu{1} \gnl
\gob{2}{} \gend} \end{eqnarray} & & \begin{eqnarray} \label{counitH*}
\scalebox{0.78}[0.78]{ \gbeg{1}{3}
\got{1}{H^*} \gnl \gcl{1} \gnl \gcu{1} \gnl \gob{1}{} \gend} = \scalebox{0.78}[0.78]{ \gbeg{4}{3}
\got{1}{H^*} \got{1}{} \gnl \gcl{1} \gu{1} \gnl \gev \gnl \gvac{1} \gob{2}{} \gend} \end{eqnarray}
\end{tabular} \end{center} \vspace{-0,8cm} respectively (one uses the universal property of $[H, I]$).
It is easy to see that a finite algebra $A$ in $\C$ is commutative if and only if $A^*$ is a
cocommutative coalgebra.
\end{prel}


For an algebra $A\in\C$ and a coalgebra $C\in\C$ we denote by ${}_A\C$ and $\C^C$ the categories of
left $A$-modules and right $C$-comodules, respectively. The proof of the following proposition is not
difficult. The first statement is proved in \cite[Proposition 2.7]{Tak2}.

\begin{prop} \prlabel{Hmod-H*comod}
Let $H\in \C$ be a finite coalgebra. If $M\in\C^H$, then $M\in {}_{H^*}\C$ with the structure morphism given
in (\ref{H*mod-S}). If $N\in {}_{H^*}\C$, then $N\in\C^H$ with the structure morphism given in (\ref{Hcomod}).
These assignments make the categories $\C^H$ and ${}_{H^*}\C$ isomorphic. \vspace{-1cm}
\begin{center}
\begin{tabular}{p{6.8cm}p{1cm}p{4.8cm}}
\begin{eqnarray} \label{H*mod-S}
\scalebox{0.9}[0.9]{
\gbeg{2}{3}
\got{1}{H^*} \got{1}{\hspace{0,1cm}M} \gnl
\glm \gnl
\gvac{1} \gob{1}{M}
\gend} = \scalebox{0.9}[0.9]{
\gbeg{2}{5}
\got{1}{H^*} \got{1}{M} \gnl
\gcl{1} \grcm \gnl
\gbr \gcl{1} \gnl
\gcl{1} \gev \gnl
\gob{1}{M} \gvac{1} \gob{1}{}
\gend}
\end{eqnarray} & &
\begin{eqnarray}\label{Hcomod}
\scalebox{0.9}[0.9]{
\gbeg{2}{3}
\got{1}{N} \gnl
\grcm \gnl
\gob{1}{N} \gob{1}{H}
\gend} = \scalebox{0.9}[0.9]{
\gbeg{7}{5}
\got{1}{} \got{1}{} \got{1}{N} \gnl
\gdb \gcl{1} \gnl
\gcn{1}{1}{1}{3} \glm \gnl
\gvac{1} \gbr \gnl
\gvac{1} \gob{1}{N} \gob{1}{H}
\gend}
\end{eqnarray}
\end{tabular}
\end{center}
\end{prop}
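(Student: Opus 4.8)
The plan is to verify the four assertions in sequence: (a) the map in \eqref{H*mod-S} turns a right $H$-comodule into a left $H^*$-module; (b) the map in \eqref{Hcomod} turns a left $H^*$-module into a right $H$-comodule; (c) these two constructions are mutually inverse on objects; and (d) both preserve the underlying morphisms, i.e.\ an $H$-comodule map is automatically an $H^*$-module map and conversely. Step (a) is the content cited from \cite[Proposition 2.7]{Tak2}, so I would only sketch it: one checks the associativity axiom for the $H^*$-action by combining the coassociativity of the $H$-coaction with the definition \eqref{multH*} of the multiplication on $H^*$ and a naturality square for the braiding, and checks unitality via the description \eqref{unitH*} of the unit of $H^*$ together with the counit axiom for the coaction. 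These are routine braided-diagram manipulations.

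For step (b) I would plug the formula \eqref{Hcomod} into the two comodule axioms. Counitality amounts to composing with $\varepsilon_H$, using \eqref{counitH*} to rewrite $\varepsilon_H$ as $\ev_{H,I}(H^*\ot\eta)$ paired against the dual basis, and then collapsing $d_P$ against an evaluation by \eqref{dbev=id} — what remains is exactly the unit axiom for the $H^*$-module structure on $N$. Coassociativity is the more involved diagram: after inserting \eqref{Hcomod} twice, one has two copies of $d_H$; using the defining relation \eqref{codiagH*} for the comultiplication of $H^*$ (equivalently, pushing one evaluation past the other and recognizing $\nabla_{H^*}$), one reduces the two iterated actions to a single action by the product in $H^*$, and the associativity of the $H^*$-action finishes it. Here one must be careful with the placement of braidings, but since $H^*$ is a genuine object of $\C$ no symmetricity hypothesis is needed.

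Step (c) is where the two dual-basis identities \eqref{dbev=id} and \eqref{evdb=id} do the real work. Starting from $M\in\C^H$, forming the $H^*$-module via \eqref{H*mod-S} and then the $H$-comodule via \eqref{Hcomod}, one gets a diagram containing a $d_H$ produced by \eqref{Hcomod} and an $e_H=\ev$ coming from \eqref{H*mod-S}; these are adjacent and cancel by \eqref{dbev=id}, returning the original coaction. The other round trip (module $\to$ comodule $\to$ module) similarly produces a $d_H$-then-$e_H$ pair that cancels by the same identity, leaving the original action; one may alternatively invoke \eqref{evdb=id} depending on which strand one slides. I would present one of these in a short chain of diagrams and assert the other by symmetry.

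Finally, for step (d): if $f:M\to M'$ is $H$-colinear, then precomposing \eqref{H*mod-S} for $M'$ with $H^*\ot f$ and using colinearity of $f$ to pull $f$ through the coaction shows $f$ is $H^*$-linear; the converse is identical using \eqref{Hcomod}. Since the two object-level constructions are inverse and both directions are morphism-preserving and clearly functorial (identities to identities, composites to composites), we obtain an isomorphism of categories. The main obstacle is bookkeeping in the coassociativity computation of step (b) and in the round-trip cancellations of step (c): one must track precisely which evaluation/dual-basis pair is being collapsed and keep the braidings in the correct order, but no new idea beyond \eqref{dbev=id}, \eqref{evdb=id}, \eqref{multH*}, \eqref{codiagH*}, \eqref{unitH*} and \eqref{counitH*} is required.
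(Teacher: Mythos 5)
Your plan is correct and is exactly the standard argument the paper has in mind (the paper omits the proof, citing Takeuchi for the module-structure half); the round-trip cancellations via \equref{dbev=id} and \equref{evdb=id}, the reduction of coassociativity of \eqref{Hcomod} to associativity of the $H^*$-action, and your observation that no transparency hypothesis is needed are all right. Two of your equation citations are swapped, however: in the counitality check you need (\ref{unitH*}), which identifies $\Epsilon_H$ with $\ev(\eta_{H^*}\ot H)$, not (\ref{counitH*}) (which computes $\Epsilon_{H^*}$); and in the coassociativity check the relevant defining relation is (\ref{multH*}) for the \emph{multiplication} of $H^*$ --- here $H$ is only a coalgebra, so $H^*$ carries an algebra, not a coalgebra, structure and (\ref{codiagH*}) does not apply --- although the maneuver you describe in the parenthetical (``recognizing $\nabla_{H^*}$'') is the correct one.
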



Throughout the paper $\C$ will be a braided monoidal category with braiding $\Phi$ and $H\in\C$ a Hopf
algebra having a bijective antipode. 


\section{Some braided monoidal categories of Yetter-Drinfel'd modules} \selabel{YD-mods}
\setcounter{equation}{0}

A left $H$-module and left $H$-comodule $N\in\C$ and a right $H$-module and right $H$-comodule
$L\in\C$ are called respectively {\em left-left} and {\em right-right Yetter-Drinfel'd modules} over $H$ in $\C$
if they obey the compatibility conditions:
\begin{center}
\begin{tabular}{p{6cm}p{0cm}p{7.4cm}}
\begin{equation} \eqlabel{left YD}
\gbeg{4}{8}
\got{2}{H} \got{1}{N} \gnl
\gcmu \gcl{1} \gnl
\gcl{1} \gbr \gnl
\glm \gcl{2} \gnl
\glcm \gnl
\gcl{1} \gbr \gnl
\gmu \gcl{1} \gnl
\gob{2}{H} \gob{1}{N}
\gend=
\gbeg{4}{5}
\got{2}{H} \got{3}{N} \gnl
\gcmu \glcm \gnl
\gcl{1} \gbr \gcl{1} \gnl
\gmu \glm \gnl
\gob{2}{H} \gob{3}{N}
\gend
\quad
\end{equation}
& &
\begin{equation} \eqlabel{right YD} \textnormal{and}\qquad\quad
\gbeg{3}{8}
\got{1}{L} \got{2}{H} \gnl
\gcl{1} \gcmu \gnl
\gbr \gcl{1} \gnl
\gcl{1} \grm \gnl
\gcl{1} \grcm \gnl
\gbr \gcl{1} \gnl
\gcl{1} \gmu \gnl
\gob{1}{L} \gob{2}{H}
\gend=
\gbeg{4}{5}
\got{1}{L} \got{4}{H} \gnl
\grcm \gcmu \gnl
\gcl{1} \gbr \gcl{1} \gnl
\grm \gmu \gnl
\gob{1}{L} \gob{4}{H}
\gend
\end{equation}
\end{tabular}
\end{center}
respectively. A {\em left-right Yetter-Drinfel'd module} over $H$ is a left $H$-module and  right
$H$-comodule $M\in\C$ whose $H$-structures are related via the relation:
\begin{eqnarray} \label{YD-mix}
\gbeg{4}{9}
\got{3}{H} \got{1}{M} \gnl
\gwcm{3} \gcl{1} \gnl
\gcn{1}{2}{1}{3} \gvac{1} \glm \gnl
\gvac{1} \gcn{1}{1}{5}{3} \gnl
\gvac{1} \gibr \gnl
\gcn{1}{1}{3}{1} \gcn{1}{1}{3}{5} \gnl
\grcm \gcn{1}{1}{3}{3} \gnl
\gcl{1} \gwmu{3} \gnl
\gob{1}{M} \gob{3}{H}
\gend=
\gbeg{3}{7}
\got{2}{H} \got{1}{M} \gnl
\gcn{1}{1}{2}{2} \gcn{1}{1}{3}{3} \gnl
\gcmu \grcm \gnl
\gcl{1} \gbr \gcl{1} \gnl
\glm \gmu \gnl
\gcn{1}{1}{3}{3} \gcn{1}{1}{4}{4} \gnl
\gob{3}{M} \gob{1}{\hspace{-0,26cm}H.}
\gend
\end{eqnarray}
In all the cases we will shorten the term ``Yetter-Drinfel'd module'' to {\em YD-module}.
The categories of left-left YD-modules and left $H$-linear and left $H$-colinear morphisms in $\C$
(which we denote by ${}_H ^H\YD(\C)$) and that of right-right YD-modules and right $H$-linear
and right $H$-colinear morphisms in $\C$ (denoted by $\YD(\C)_H^H$) respectively, are known to be
braided monoidal categories with braidings:
\begin{equation} \eqlabel{braid-LR}
\Phi_{X,Y}^L=
\gbeg{3}{5}
\got{1}{} \got{1}{X} \got{1}{Y} \gnl
\glcm \gcl{1} \gnl
\gcl{1} \gbr \gnl
\glm \gcl{1} \gnl
\gob{1}{} \gob{1}{Y} \gob{1}{X}
\gend
\quad\textnormal{and}\quad
\Phi_{W,Z}^R=
\gbeg{3}{5}
\got{1}{W} \got{1}{Z} \gnl
\gcl{1} \grcm \gnl
\gbr \gcl{1} \gnl
\gcl{1} \grm \gnl
\gob{1}{Z} \gob{1}{W}
\gend
\end{equation}
for objects $X, Y\in{}_H ^H\YD(\C)$ and $W, Z\in\YD(\C)_H^H$ respectively,
(see e.g. \cite{Besp}). However, in order that the category of left-right
YD-modules be braided monoidal, some symmetricity conditions on the braiding in $\C$
should be assumed, as we will see further below. Like in \cite[Thm. 3.4.3]{Besp} Bespalov
has that the category ${}_H\YD(\C)^{H^{op}}$ 
is braided monoidal, but there he considers the tensor product of two left-right YD-modules a
right $H^{op}$-comodule via the codiagonal structure in the category $\crta\C$,
whereas the $H$-module structure he considers in $\C$
(as in \cite[Lemma 3.3.2]{Besp}).
Thus for two objects $M, N$ of this category, the object $M\ot N$ has the $H$-comodule structure:
$$\ro_{M\ot N}=(M\ot N\ot\nabla_{H^{op}})(M\ot\Phi_{N, H}^{-1}\ot H)(\ro_M\ot\ro_N).$$
Bespalov considers $\nabla_{H^{op}}=\nabla\Phi_{H,H}^{-1}$ (instead, we regard here the positive
sign of the braiding) in order that $H^{op}$ be a bialgebra in $\crta\C$. In the present paper
we prefer to consider {\em all} the structures in $\C$.
Accordingly, we will have that the categories ${}_H\YD(\C)^{H^{op}}$ and ${}_{H^{cop}}\YD(\C)^H$
are braided monoidal if the braiding $\Phi$ in $\C$ fulfills $\Phi_{H, X}=\Phi_{X, H}^{-1}$ for
every corresponding YD-module $X\in\C$. We will say that {\em $\Phi_{H, X}$ is symmetric}.
As a matter of fact, if $\Phi_{H, H}$ and $\Phi_{H, X}$ are symmetric (indeed $H$ itself is a YD-module
over itself), then the upper structure coincides with the usual codiagonal comodule structure on
$M\ot N$ in $\C$. Nevertheless, we will prove explicitly the claims by our approach as this is
the general setting of our work and we will prove also other results in this manner.

\vspace{0,2cm}

Before proving that the category ${}_H\YD(\C)^{H^{op}}$ is braided monoidal, we will note some important
facts. Observe that:
\begin{eqnarray} \label{antipode2}
\gbeg{2}{6}
\got{2}{H} \gnl
\gcmu \gnl
\gmp{-} \gcl{1} \gnl
\gibr \gnl
\gmu \gnl
\gob{2}{H}
\gend=
\gbeg{2}{6}
\got{1}{H} \gnl
\gcl{1} \gnl
\gcu{1} \gnl
\gu{1} \gnl
\gcl{1} \gnl
\gob{1}{H}
\gend
\end{eqnarray}
since:
$$
\gbeg{2}{7}
\got{2}{H} \gnl
\gcmu \gnl
\gmp{-} \gcl{1} \gnl
\gibr \gnl
\gmu \gnl
\gvac{1} \hspace{-0,22cm} \gmp{+} \gnl
\gob{3}{H}
\gend=
\gbeg{2}{8}
\got{2}{H} \gnl
\gcmu \gnl
\gmp{-} \gcl{1} \gnl
\gibr \gnl
\gmp{+} \gmp{+} \gnl
\gbr \gnl
\gmu \gnl
\gob{2}{H}
\gend=
\gbeg{2}{5}
\got{2}{H} \gnl
\gcmu \gnl
\gcl{1} \gmp{+} \gnl
\gmu \gnl
\gob{2}{H}
\gend=
\gbeg{2}{6}
\got{1}{H} \gnl
\gcl{1} \gnl
\gcu{1} \gnl
\gu{1} \gnl
\gcl{1} \gnl
\gob{1}{H}
\gend=
\gbeg{2}{7}
\got{1}{H} \gnl
\gcl{1} \gnl
\gcu{1} \gnl
\gu{1} \gnl
\gmp{+} \gnl
\gcl{1} \gnl
\gob{1}{H.}
\gend
$$
From this point on we will assume that the antipode of $H$ is bijective (which is fulfilled for example
if $H$ is finite and $\C$ has equalizers, \cite[Theorem 4.1]{Tak2}).
The sign ``$+$'' stands for the antipode whereas ``$-$'' stands for the inverse of the antipode.
Furthermore, we have that the condition (\ref{YD-mix}) is equivalent to:
\begin{eqnarray} \label{YD-S}
\gbeg{6}{9}
\got{1}{H} \got{5}{M} \gnl
\gcn{2}{2}{1}{5} \gvac{1} \gcl{2} \gnl \gnl
\gvac{2} \glm \gnl
\gvac{3} \gcl{1} \gnl
\gvac{3} \grcm \gnl
\gvac{3} \gcl{2} \gcn{2}{2}{1}{5} \gnl
\gob{7}{M} \gob{1}{\hspace{-0,64cm}H}
\gend=
\gbeg{6}{10}
\gvac{1} \got{2}{H} \got{3}{M} \gnl
\gvac{1} \gcmu \gvac{1} \gcl{2} \gnl
\gvac{1} \gcl{1} \gcn{1}{1}{1}{2} \gnl
\gvac{1} \gmp{-} \gcmu \grcm \gnl
\gvac{1} \gcl{1} \gcl{1} \gbr \gcl{1} \gnl
\gvac{1} \gcn{1}{1}{1}{3} \glm \gmu \gnl
\gvac{1} \gvac{1} \gibr \gcn{1}{1}{2}{1} \gnl
\gvac{1} \gvac{1} \gcl{2} \gibr \gnl
\gvac{1} \gvac{2} \gmu \gnl
\gvac{1} \gob{3}{M} \gob{1}{\hspace{-0,26cm}H}
\gend
\end{eqnarray}
To prove this assume that (\ref{YD-mix}) holds. Then:
$$
\gbeg{6}{10}
\got{2}{H} \got{3}{M} \gnl
\gcmu \gvac{1} \gcl{2} \gnl
\gcl{1} \gcn{1}{1}{1}{2} \gnl
\gmp{-} \gcmu \grcm \gnl
\gcl{1} \gcl{1} \gbr \gcl{1} \gnl
\gcn{1}{1}{1}{3} \glm \gmu \gnl
\gvac{1} \gibr \gcn{1}{1}{2}{1} \gnl
\gvac{1} \gcl{2} \gibr \gnl
\gvac{2} \gmu \gnl
\gob{3}{M} \gob{1}{\hspace{-0,26cm}H}
\gend\stackrel{(\ref{YD-mix})}{=}
\gbeg{6}{13}
\got{3}{H} \got{3}{M} \gnl
\gwcm{3} \gvac{1} \gcl{2} \gnl
\gmp{-} \gwcm{3} \gcl{1} \gnl
\gcl{6} \gcn{1}{2}{1}{3} \gvac{1} \glm \gnl
\gvac{1} \gvac{1} \gcn{1}{1}{5}{3} \gnl
\gvac{1} \gvac{1} \gibr \gnl
\gvac{1} \gcn{1}{1}{3}{1} \gcn{1}{1}{3}{5} \gnl
\gvac{1} \grcm \gcn{1}{1}{3}{3} \gnl
\gvac{1} \gcl{1} \gwmu{3} \gnl
\gibr \gcn{1}{1}{3}{1} \gnl
\gcl{2} \gibr \gnl
\gvac{1} \gmu \gnl
\gob{1}{M} \gob{2}{H}
\gend\stackrel{coass.}{\stackrel{ass.}{\stackrel{nat.}{=}}}
\gbeg{5}{13}
\got{5}{H} \got{1}{\hspace{-0,6cm}M} \gnl
\gvac{1} \gwcm{3} \gcl{1} \gnl
\gwcm{3} \glm \gnl
\gmp{-} \gcn{1}{1}{3}{5} \gvac{2} \gcl{1} \gnl
\gcl{1} \gvac{2} \gibr \gnl
\gcn{1}{1}{1}{3} \gvac{1} \gcn{1}{1}{3}{1} \gvac{1} \gcl{3} \gnl
\gvac{1} \gcl{1} \grcm \gnl
\gvac{1} \gibr \gcl{1} \gnl
\gvac{1} \gcl{3} \gibr \gcl{1} \gnl
\gvac{2} \gcl{1} \gibr \gnl
\gvac{1} \gcl{2} \gcn{1}{1}{1}{2} \gmu \gnl
\gvac{3} \hspace{-0,22cm} \gmu \gnl
\gob{4}{M} \gob{1}{\hspace{-0,24cm}H}
\gend\stackrel{nat.}{\stackrel{(\ref{antipode2})}{\stackrel{unit}{\stackrel{counit}{=}}}}
\gbeg{7}{9}
\got{1}{H} \got{5}{M} \gnl
\gcn{2}{2}{1}{5} \gvac{1} \gcl{2} \gnl \gnl
\gvac{2} \glm \gnl
\gvac{3} \gcl{1} \gnl
\gvac{3} \grcm \gnl
\gvac{3} \gcl{2} \gcn{2}{2}{1}{5} \gnl
\gob{7}{M} \gob{1}{\hspace{-0,64cm}H.}
\gend
$$
Conversely, (\ref{YD-S}) implies:
$$
\gbeg{4}{9}
\got{3}{H} \got{1}{M} \gnl
\gwcm{3} \gcl{1} \gnl
\gcn{1}{2}{1}{3} \gvac{1} \glm \gnl
\gvac{1} \gcn{1}{1}{5}{3} \gnl
\gvac{1} \gibr \gnl
\gcn{1}{1}{3}{1} \gcn{1}{1}{3}{5} \gnl
\grcm \gcn{1}{1}{3}{3} \gnl
\gcl{1} \gwmu{3} \gnl
\gob{1}{M} \gob{3}{H}
\gend\stackrel{nat.}{=}
\gbeg{4}{8}
\got{2}{H} \got{1}{M} \gnl
\gcmu \gcl{1} \gnl
\gcn{1}{2}{1}{3} \glm \gnl
\gvac{2}\grcm \gnl
\gvac{1} \gibr \gcl{1} \gnl
\gvac{1} \gcl{2} \gibr \gnl
\gvac{2} \gmu \gnl
\gob{3}{M} \gob{1}{\hspace{-0,26cm}H}
\gend\stackrel{(\ref{YD-S})}{=}
\gbeg{6}{13}
\got{2}{H} \got{5}{M} \gnl
\gcmu \gvac{2} \gcl{4} \gnl
\gcl{6} \gcn{1}{1}{1}{2} \gnl
\gvac{1} \gcmu \gnl
\gvac{1} \gcl{1} \gcn{1}{1}{1}{2} \gnl
\gvac{1} \gmp{-} \gcmu \grcm \gnl
\gvac{1} \gcl{1} \gcl{1} \gbr \gcl{1} \gnl
\gvac{1} \gcn{1}{1}{1}{3} \glm \gmu \gnl
\gcn{1}{1}{1}{3} \gvac{1} \gibr \gcn{1}{1}{2}{1} \gnl
\gvac{1} \gibr \gibr \gnl
\gvac{1} \gcl{2} \gcn{1}{1}{1}{2} \gmu \gnl
\gvac{3} \hspace{-0,22cm}\gmu \gnl
\gob{4}{M} \gob{1}{\hspace{-0,32cm}H}
\gend\stackrel{coass.}{\stackrel{ass.}{=}}
\gbeg{6}{12}
\gvac{1} \got{3}{H} \got{2}{M} \gnl
\gvac{1} \gwcm{3} \gcn{1}{1}{2}{2} \gnl
\gvac{1} \hspace{-0,34cm} \gcmu \gcmu \grcm \gnl
\gvac{1} \gcl{2} \gmp{-} \gcl{1} \gbr \gcl{1} \gnl
\gvac{1} \gvac{1} \gcn{1}{1}{1}{3} \glm \gmu \gnl
\gvac{1} \gcn{1}{1}{1}{3} \gvac{1} \gibr \gcn{1}{1}{2}{1} \gnl
\gvac{1} \gvac{1} \gibr \gibr \gnl
\gvac{1} \gvac{1} \gcl{4} \gibr \gcl{1} \gnl
\gvac{1} \gvac{2} \gcn{1}{2}{1}{2} \gibr \gnl
\gvac{1} \gvac{3} \gmu \gnl
\gvac{1} \gvac{3} \hspace{-0,34cm} \gmu \gnl
\gvac{1} \gob{4}{M} \gob{1}{\hspace{-0,32cm}H}
\gend\stackrel{nat.}{\stackrel{(\ref{antipode2})}{\stackrel{unit}{\stackrel{counit}{=}}}}
\gbeg{3}{7}
\got{2}{H} \got{1}{M} \gnl
\gcn{1}{1}{2}{2} \gcn{1}{1}{3}{3} \gnl
\gcmu \grcm \gnl
\gcl{1} \gbr \gcl{1} \gnl
\glm \gmu \gnl
\gcn{1}{1}{3}{3} \gcn{1}{1}{4}{4} \gnl
\gob{3}{M} \gob{1}{\hspace{-0,26cm}H.}
\gend
$$

\begin{rem}
If $\Phi_{H, H}$ is symmetric, (\ref{antipode2}) can be considered with $\Phi_{H,H}$ instead of
$\Phi_{H,H}^{-1}$; then one proves that \vspace{-0,4cm}
\begin{center}
\hspace{-1,4cm} 
\begin{tabular}{p{6.1cm}p{0.9cm}p{8.7cm}} 
\begin{equation} \label{YD-mix*}
\gbeg{4}{9}
\got{3}{H} \got{1}{M} \gnl
\gwcm{3} \gcl{1} \gnl
\gcn{1}{2}{1}{3} \gvac{1} \glm \gnl
\gvac{1} \gcn{1}{1}{5}{3} \gnl
\gvac{1} \gbr \gnl
\gcn{1}{1}{3}{1} \gcn{1}{1}{3}{5} \gnl
\grcm \gcn{1}{1}{3}{3} \gnl
\gcl{1} \gwmu{3} \gnl
\gob{1}{M} \gob{3}{H}
\gend=
\gbeg{3}{7}
\got{2}{H} \got{1}{M} \gnl
\gcn{1}{1}{2}{2} \gcn{1}{1}{3}{3} \gnl
\gcmu \grcm \gnl
\gcl{1} \gbr \gcl{1} \gnl
\glm \gmu \gnl
\gcn{1}{1}{3}{3} \gcn{1}{1}{4}{4} \gnl
\gob{3}{M} \gob{1}{\hspace{-0,26cm}H}
\gend
\end{equation} & &
\vspace{-0,96cm}
\begin{eqnarray} \label{YD-S*}
\textnormal{\hspace{-1,2cm} is equivalent to}\hspace{0,2cm}
\gbeg{6}{9}
\got{1}{H} \got{5}{M} \gnl
\gcn{2}{2}{1}{5} \gvac{1} \gcl{2} \gnl \gnl
\gvac{2} \glm \gnl
\gvac{3} \gcl{1} \gnl
\gvac{3} \grcm \gnl
\gvac{3} \gcl{2} \gcn{2}{2}{1}{5} \gnl
\gob{7}{M} \gob{1}{\hspace{-0,64cm}H}
\gend=
\gbeg{7}{10}
\gvac{1} \got{2}{H} \got{3}{M} \gnl
\gvac{1} \gcmu \gvac{1} \gcl{2} \gnl
\gvac{1} \gcl{1} \gcn{1}{1}{1}{2} \gnl
\gvac{1} \gmp{-} \gcmu \grcm \gnl
\gvac{1} \gcl{1} \gcl{1} \gbr \gcl{1} \gnl
\gvac{1} \gcn{1}{1}{1}{3} \glm \gmu \gnl
\gvac{1} \gvac{1} \gbr \gcn{1}{1}{2}{1} \gnl
\gvac{1} \gvac{1} \gcl{2} \gbr \gnl
\gvac{1} \gvac{2} \gmu \gnl
\gvac{1} \gob{3}{M} \gob{1}{\hspace{-0,26cm}H}
\gend
\end{eqnarray}
\end{tabular}
\end{center} \vspace{-0,5cm}
(versions of the relations (\ref{YD-mix}) and (\ref{YD-S})).
\end{rem}

It is important to note that $H$ itself is a YD-module over itself with suitable structures. For example,
it is a left-right YD-module with the regular action and the adjoint coaction:
$$
\scalebox{0.88}{\gbeg{3}{5}
\got{1}{H} \gnl
\gcl{1} \gnl
\grcm \gnl
\gcl{1} \gcn{1}{1}{1}{3} \gnl
\gob{1}{H} \gob{3}{H}
\gend}=\scalebox{0.88}{
\gbeg{6}{7}
\got{3}{H} \gnl
\gwcm{3} \gnl
\gmp{-} \gwcm{3} \gnl
\gibr \gcn{1}{1}{3}{1} \gnl
\gcl{1} \gibr \gnl
\gcl{1} \gmu \gnl
\gob{1}{H} \gob{2}{H.}
\gend}
$$
For the other versions of a YD-module (see \seref{other-cases}) $H$ can be equipped with similar
structures - regular (co)actions and adjoint (co)actions.
\par\medskip

The last convention before the promissed proof is that throughout, by abuse of notation, we will
write {\em $\Phi_{H, M}$
is symmetric for all $M\in {}_H\YD(\C)^{H^{op}}$}, and similarly for other versions of the
YD-categories, when strictly speaking we should say {\em for all $M\in\C$}. Indeed, via the
forgetful functor $\U: {}_H\YD(\C)^{H^{op}} \to\C$ every $M\in {}_H\YD(\C)^{H^{op}}$ is an object
in $\C$, and every $N\in\C$ can be equipped with trivial $H$-(co)module structures to form a YD-module.

\begin{prop} \prlabel{prva cat}
Assume that 
$\Phi_{H, M}$ is symmetric for every left-right YD-module $M$ over $H$ in $\C$. The category
${}_H\YD(\C)^{H^{op}}$ is braided monoidal with braiding and its inverse given by:
$$
\Phi^*_{M, N}=
\gbeg{4}{6}
\gvac{1} \got{1}{M} \got{1}{N} \gnl
\gvac{1} \gbr \gnl
\gcn{1}{1}{3}{2} \gcn{1}{1}{3}{4} \gnl
\gvac{1} \hspace{-0,36cm} \grcm \gcn{1}{1}{1}{1} \gnl
\gcn{1}{1}{3}{3} \gvac{1} \glm \gnl
\gob{3}{N} \gob{1}{M}
\gend
\quad\textnormal{and}\quad
(\Phi^*_{M, N})^{-1}=
\gbeg{4}{7}
\got{2}{N} \got{2}{M} \gnl
\gvac{1} \hspace{-0,34cm} \grcm \gcn{1}{1}{1}{1} \gnl
\gvac{1} \gcl{1} \gmp{+} \gcl{1} \gnl
\gcn{1}{1}{3}{3} \gvac{1} \glm \gnl
\gcn{1}{1}{3}{4} \gcn{1}{1}{5}{4} \gnl
\gvac{2} \hspace{-0,34cm} \gibr \gnl
\gvac{2} \gob{1}{M} \gob{1}{N}
\gend
$$
for $M, N\in {}_H\YD(\C)^{H^{op}}$.
\end{prop}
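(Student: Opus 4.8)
The plan is to verify in turn that $\Phi^*_{M,N}$ is a morphism in ${}_H\YD(\C)^{H^{op}}$, that it is natural in $M$ and $N$, that it satisfies the two hexagon identities, and finally that the displayed inverse is indeed its inverse; after that the claim that the category is monoidal follows from the fact that ${}_H\C$ and $\C^{H^{op}}$ are monoidal (Majid) and one only has to check that the tensor product of two left-right YD-modules, with the module structure in $\C$ and the comodule structure $\ro_{M\ot N}=(M\ot N\ot\nabla_{H^{op}})(M\ot\Phi^{-1}_{N,H}\ot H)(\ro_M\ot\ro_N)$ introduced above, is again a left-right YD-module. Throughout I will use that $H$ is transparent relative to all the YD-modules in sight, i.e. $\Phi_{H,M}=\Phi_{M,H}^{-1}$, which is exactly the hypothesis; this is what allows the braiding strands involving $H$ to be slid freely and will be invoked repeatedly in the braided-diagram manipulations.

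First I would check that $\Phi^*_{M,N}\colon M\ot N\to N\ot M$ is left $H$-linear and right $H^{op}$-colinear. Linearity over $H$ uses that the module action on $M\ot N$ is the diagonal one in $\C$, together with the YD-compatibility \equref{YD-mix} (or its equivalent form \equref{YD-S}) to push the action of $H$ past the coaction $\ro_M$ and the braiding; the transparency of $H$ is needed to commute the $H$-strand with the $N$-strand. Colinearity over $H^{op}$ is the dual computation, using the comodule structure $\ro_{M\ot N}$ displayed above and again \equref{YD-mix}. Naturality is routine: both $\ro$ and $\rho$ are natural in their arguments, as is $\Phi$, so $\Phi^*$ is a natural transformation. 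The two hexagon axioms then reduce, after writing out the diagonal module and codiagonal comodule structures on triple tensor products, to coassociativity of $\ro$, associativity of the action, the comodule and module axioms, and the hexagon (braiding) axiom for $\Phi$ itself, once more sliding $H$-strands by transparency. To see that the stated $(\Phi^*_{M,N})^{-1}$ really inverts $\Phi^*_{M,N}$, I would compose the two diagrams and simplify: the coaction $\ro_M$ created by one factor meets the coaction used by the other, and after applying the antipode–comultiplication compatibility, the relation \equref{antipode2} (available since the antipode is bijective), and the counit and unit axioms, together with one use of transparency to cancel a braiding against its inverse, everything collapses to $\id_{M\ot N}$; the reverse composite is symmetric.

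The main obstacle I expect is the verification that $M\ot N$ is again a left-right YD-module, i.e. that the diagonal action and the ``twisted codiagonal'' coaction $\ro_{M\ot N}$ satisfy \equref{YD-mix}. This is the step where the precise placement of $\Phi^{-1}_{N,H}$ in the definition of $\ro_{M\ot N}$ matters, and where one genuinely needs $\Phi_{H,H}$ symmetric (so that $H^{op}$ is a bialgebra and $\nabla_{H^{op}}$ behaves well) in addition to $\Phi_{H,X}$ symmetric for the YD-modules $X$; the diagram chase interleaves two copies of the YD-axiom, coassociativity, and several transparency moves, and keeping the braiding signs consistent is the delicate part. Once this is established, the braided monoidal structure on ${}_H\YD(\C)^{H^{op}}$ follows, with unit object $I$ carrying trivial $H$-(co)module structures, and the coherence (pentagon, triangle, and the compatibility of $\Phi^*$ with the associativity and unit constraints) being inherited from $\C$ by the usual bookkeeping.
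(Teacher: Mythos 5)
Your plan matches the paper's proof in both structure and substance: the paper likewise equips $M\ot N$ with the diagonal action and codiagonal coaction (noting that under transparency Bespalov's twisted coaction coincides with the plain one), devotes the bulk of the argument to verifying the YD-compatibility \equref{YD-mix*} for $M\ot N$ and the $H$-linearity and $H^{op}$-colinearity of $\Phi^*$ via exactly the ingredients you list (the YD-axiom for each factor, coassociativity/associativity, \equref{antipode2}, and repeated transparency moves), and leaves the hexagon axioms and the inverse check to the reader as routine. Your identification of the tensor-product YD-compatibility as the delicate step, and of where $\Phi_{H,H}$ versus $\Phi_{H,X}$ symmetry is needed, is accurate, so the proposal is correct and essentially the same argument.
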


\begin{proof}
Because of the symmetricity assumption on $\Phi$ we will consider the
YD-compatibility condition from the above Remark. Let $M$ and $N$ be two
left-right YD-modules over $H$. We consider their tensor product as a left
$H$-module and right $H^{op}$-comodule with the (co)diagonal structures.
We now prove that the YD-compatibility of these $H$-structures holds for $M\ot N$:
$$
\gbeg{4}{9}
\got{3}{H} \got{1}{M\ot N} \gnl
\gwcm{3} \gcl{1} \gnl
\gcn{1}{2}{1}{3} \gvac{1} \glm \gnl
\gvac{1} \gcn{1}{1}{5}{3} \gnl
\gvac{1} \gbr \gnl
\gcn{1}{1}{3}{1} \gcn{1}{1}{3}{5} \gnl
\grcm \gcn{1}{1}{3}{3} \gnl
\gcl{1} \gwmu{3} \gnl
\gob{1}{M\ot N} \gob{3}{H}
\gend=
\gbeg{7}{13}
\gvac{1} \got{3}{H} \got{2}{M} \got{1}{\hspace{-0,34cm}N} \gnl
\gvac{1} \gwcm{3} \gcn{1}{2}{2}{2} \gcn{1}{3}{2}{2} \gnl
\gvac{1} \gcl{2} \gvac{1} \hspace{-0,34cm} \gcmu \gnl
\gvac{3} \gcl{1} \gbr \gnl
\gvac{1} \gcn{2}{1}{2}{5} \glm \glm \gnl
\gvac{3} \gbr \gcn{1}{1}{3}{1} \gnl
\gvac{2} \gcn{1}{1}{3}{1} \gvac{1} \gbr \gnl
\gvac{2} \grcm \grcm \gcn{1}{1}{-1}{1} \gnl
\gvac{2} \gcl{4} \gbr \gcl{1} \gcl{2} \gnl
\gvac{3} \gcl{3} \gbr \gnl
\gvac{4} \gmu \gcn{1}{1}{1}{0} \gnl
\gvac{5} \hspace{-0,34cm} \gmu \gnl
\gvac{2} \gob{2}{M} \gob{1}{\hspace{-0,32cm}N} \gob{2}{H}
\gend\stackrel{coass.}{\stackrel{ass.}{=}}
\gbeg{7}{13}
\gvac{1} \got{3}{H} \got{2}{M} \got{1}{\hspace{-0,34cm}N} \gnl
\gvac{1} \gwcm{3} \gcn{1}{2}{2}{2} \gcn{1}{3}{2}{2} \gnl
\gvac{1} \hspace{-0,34cm} \gcmu \gcn{1}{1}{2}{3} \gnl
\gvac{1} \gcl{1} \gcn{1}{1}{1}{3} \gvac{1} \gbr \gnl
\gvac{1} \gcn{2}{1}{1}{5} \glm \glm \gnl
\gvac{3} \gbr \gcn{1}{1}{3}{1} \gnl
\gvac{2} \gcn{1}{1}{3}{1} \gvac{1} \gbr \gnl
\gvac{2} \grcm \grcm \gcn{1}{1}{-1}{1} \gnl
\gvac{2} \gcl{4} \gbr \gcl{1} \gcl{2} \gnl
\gvac{3} \gcl{3} \gbr \gnl
\gvac{4} \gcn{1}{1}{1}{2} \gmu \gnl
\gvac{5} \hspace{-0,22cm} \gmu \gnl
\gvac{2} \gob{2}{M} \gob{1}{\hspace{-0,32cm}N} \gob{2}{H}
\gend\stackrel{nat.}{=}
\gbeg{7}{13}
\got{3}{H} \got{2}{M} \got{1}{\hspace{-0,34cm}N} \gnl
\gwcm{3} \gcn{1}{2}{2}{2} \gcn{1}{3}{2}{2} \gnl
\hspace{-0,34cm} \gcmu \gcn{1}{1}{2}{3} \gnl
\gcl{1} \gcn{1}{1}{1}{3} \gvac{1} \gbr \gnl
\gcn{2}{1}{1}{5} \glm \glm \gnl
\gvac{2} \gbr \gcn{1}{1}{3}{1} \gnl
\gvac{1} \gcn{1}{1}{3}{1} \gvac{1} \gcl{2} \grcm \gnl
\gvac{1} \grcm \gcl{1} \gcl{1} \gcl{1} \gnl
\gvac{1} \gcl{4} \gmu \gcn{1}{1}{1}{0} \gcn{1}{2}{1}{0} \gnl
\gvac{3} \hspace{-0,34cm} \gbr \gnl
\gvac{3} \gcn{1}{2}{1}{1} \gbr \gnl
\gvac{4} \gmu \gnl
\gvac{1} \gob{2}{M} \gob{1}{N} \gob{2}{H}
\gend
$$

$$\stackrel{M (\ref{YD-mix*})}{\stackrel{nat.}{{=}}}
\gbeg{7}{11}
\got{4}{H} \got{1}{M} \got{1}{N} \gnl
\gwcm{4} \gcl{1} \gcn{1}{2}{1}{1} \gnl
\gcn{1}{1}{1}{3} \gvac{2} \gbr \gnl
\gwcm{3} \gcl{1} \glm \gnl
\gcn{1}{1}{1}{1} \gcn{1}{1}{3}{3} \gvac{1} \grcm \gcl{2} \gnl
\gcn{2}{1}{1}{3} \gbr \gcl{1} \gnl
\gvac{1} \glm \gmu \gcn{1}{1}{1}{0} \gnl
\gvac{2} \gcl{3} \gvac{1} \hspace{-0,34cm} \gbr \gnl
\gvac{5} \hspace{-0,4cm} \grcm \gcn{1}{1}{-1}{1} \gnl
\gvac{5} \gcl{1} \gmu \gnl
\gvac{3} \gob{2}{M} \gob{1}{N} \gob{2}{H}
\gend\stackrel{coass.}{\stackrel{ass.}{\stackrel{nat.}{=}}}
\gbeg{6}{14}
\got{2}{H} \got{1}{M} \got{5}{N} \gnl
\gcmu \gcl{1} \gcn{1}{4}{5}{5} \gnl
\gcl{3} \gbr \gnl
\gvac{1} \gcl{1} \gcn{1}{1}{1}{4} \gnl
\gvac{1} \grcm \gcmu \gnl
\glm \gibr \glm \gnl
\gvac{1} \gcl{7} \gcn{1}{2}{1}{3} \gcn{1}{1}{1}{3} \gvac{1} \gcl{1} \gnl
\gvac{4} \gbr \gnl
\gvac{3} \gbr \gcl{3} \gnl
\gcn{1}{1}{7}{5} \gvac{3} \gcl{2} \gnl
\gvac{2} \grcm \gnl
\gvac{2} \gcl{2} \gmu \gcn{1}{1}{1}{0} \gnl
\gvac{4} \hspace{-0,34cm} \gmu  \gnl
\gvac{1} \gob{2}{M} \gob{1}{\hspace{-0,34cm}N} \gob{2}{H}
\gend\stackrel{\Phi_{H,H}}{\stackrel{nat.}{=}}
\gbeg{7}{15}
\got{2}{H} \got{1}{M} \got{5}{N} \gnl
\gcmu \gcl{1} \gcn{1}{4}{5}{5} \gnl
\gcl{3} \gbr \gnl
\gvac{1} \gcl{1} \gcn{1}{1}{1}{4} \gnl
\gvac{1} \grcm \gcmu \gnl
\glm \gcl{5} \gcl{1} \glm \gnl
\gvac{1} \gcl{8} \gvac{1} \gcn{1}{1}{1}{3} \gvac{1} \gcl{1} \gnl
\gvac{4} \gbr \gnl
\gvac{3} \gcn{1}{1}{3}{1} \gvac{1} \gcl{2} \gnl
\gvac{3} \grcm \gnl
\gvac{2} \gbr \gmu \gnl
\gvac{2} \gcl{3} \gcl{1} \gcn{1}{1}{2}{1} \gnl
\gvac{3} \gbr \gnl
\gvac{3} \gmu \gnl
\gvac{1} \gob{1}{M} \gob{1}{N} \gob{2}{H}
\gend
$$

$$
\stackrel{N (\ref{YD-mix*})}{\stackrel{nat.}{{=}}}
\gbeg{8}{12}
\got{2}{H} \got{1}{M} \got{5}{N} \gnl
\gcmu \gcl{1} \gcn{1}{3}{5}{5} \gnl
\gcl{3} \gbr \gnl
\gvac{1} \gcl{1} \gcn{1}{1}{1}{4} \gnl
\gvac{1} \grcm \gcmu \grcm \gnl
\glm \gcl{2} \gcl{1} \gbr \gcl{1} \gnl
\gvac{1} \gcl{5} \gvac{1} \glm \gmu \gnl
\gvac{2} \gcn{1}{1}{1}{3} \gvac{1} \gcl{1} \gcn{1}{1}{2}{1} \gnl
\gvac{3} \gbr \gcl{1} \gnl
\gvac{3} \gcl{2} \gbr \gnl
\gvac{4} \gmu \gnl
\gvac{1} \gob{1}{M} \gvac{1} \gob{1}{N} \gob{2}{H}
\gend\stackrel{\Phi_{H,H}}{\stackrel{nat.}{=}}
\gbeg{8}{11}
\got{2}{H} \got{1}{M} \got{5}{N} \gnl
\gcmu \gcl{1} \gcn{1}{3}{5}{5} \gnl
\gcl{3} \gbr \gnl
\gvac{1} \gcl{1} \gcn{1}{1}{1}{4} \gnl
\gvac{1} \grcm \gcmu \grcm \gnl
\glm \gibr \gbr \gcl{1} \gnl
\gvac{1} \gcl{4} \gcl{1} \gbr \gmu \gnl
\gvac{2} \glm \gcl{1} \gcn{1}{1}{2}{1} \gnl
\gvac{3} \gcl{2} \gbr \gnl
\gvac{4} \gmu \gnl
\gvac{1} \gob{1}{M} \gvac{1} \gob{1}{N} \gob{2}{H}
\gend\stackrel{\Phi_{H,H}}{\stackrel{nat.}{=}}
\gbeg{8}{12}
\got{2}{H} \got{1}{M} \got{5}{N} \gnl
\gcmu \gcl{1} \gcn{1}{3}{5}{5} \gnl
\gcl{3} \gbr \gnl
\gvac{1} \gcl{1} \gcn{1}{1}{1}{4} \gnl
\gvac{1} \grcm \gcmu \grcm \gnl
\glm \gibr \gcl{1} \gcl{2} \gcl{3} \gnl
\gvac{1} \gcl{5} \gcl{3} \gibr \gnl
\gvac{3} \gcl{1} \gbr \gnl
\gvac{3} \gbr \gbr \gnl
\gvac{2} \glm \gmu \gcn{1}{1}{1}{0} \gnl
\gvac{3} \gcl{1} \gvac{1} \hspace{-0,34cm} \gmu \gnl
\gvac{1} \gob{2}{M} \gob{2}{N} \gob{2}{H}
\gend
$$

$$
\stackrel{nat.}{=}
\gbeg{8}{8}
\got{3}{H} \got{2}{M} \gvac{1} \got{1}{\hspace{-0,34cm}N} \gnl
\gwcm{3} \gvac{1} \hspace{-0,34cm} \grcm \grcm \gnl
\gcn{1}{2}{2}{2} \gvac{1} \gcmu \gcl{1} \gbr \gcl{1} \gnl
\gvac{2} \gcl{1} \gbr \gcl{1} \gbr \gnl
\gvac{1} \gcn{1}{1}{0}{1} \gbr \gbr \gcl{1} \gcl{1} \gnl
\gvac{1} \glm \glm \gmu \gcn{1}{1}{1}{0} \gnl
\gvac{2} \gcl{1} \gvac{1} \gcl{1} \gvac{1} \hspace{-0,34cm} \gmu \gnl
\gvac{2} \gob{2}{M} \gvac{1} \gob{1}{\hspace{-0,32cm}N} \gob{2}{H}
\gend\stackrel{coass.}{\stackrel{ass.}{\stackrel{nat.}{=}}}
\gbeg{8}{8}
\gvac{1} \got{3}{H} \got{1}{M} \gvac{1} \got{1}{N} \gnl
\gvac{1} \gwcm{3} \grcm \grcm \gnl
\gvac{1} \hspace{-0,22cm} \gcmu \gvac{1} \hspace{-0,2cm} \gcl{1} \gcl{1} \gbr \gcl{1} \gnl
\gvac{2} \gcn{1}{1}{0}{1} \gcn{1}{1}{0}{1} \gbr \gcl{1} \gbr \gnl
\gvac{2} \gcl{1} \gbr \gbr \gmu \gnl
\gvac{2} \glm \glm \gcn{1}{1}{1}{2} \gcn{1}{1}{2}{2} \gnl
\gvac{3} \gcl{1} \gvac{1} \gcl{1} \gvac{1} \hspace{-0,34cm} \gmu \gnl
\gvac{3} \gob{2}{M} \gvac{1} \gob{1}{\hspace{-0,32cm}N} \gob{2}{H}
\gend=
\gbeg{3}{7}
\got{2}{H} \got{1}{M\ot N} \gnl
\gcn{1}{1}{2}{2} \gcn{1}{1}{3}{3} \gnl
\gcmu \grcm \gnl
\gcl{1} \gbr \gcl{1} \gnl
\glm \gmu \gnl
\gcn{1}{1}{3}{3} \gcn{1}{1}{4}{4} \gnl
\gob{3}{M\ot N} \gob{1}{\hspace{-0,26cm}H.}
\gend
$$
The check that $\Phi^*$ satisfies the braiding axioms we leave to the reader. We prove here
the $H$-linearity of $\Phi^*$: \vspace{-0,26cm}
$$
\gbeg{4}{10}
\got{2}{H} \got{1}{M} \got{1}{N} \gnl
\gcmu \gcl{1} \gcl{1} \gnl
\gcl{1} \gbr \gcl{1} \gnl
\glm \glm \gnl
\gvac{1} \gcl{1} \gcn{1}{1}{3}{1} \gnl
\gvac{1} \gbr \gnl
\gcn{1}{1}{3}{2} \gcn{1}{1}{3}{4} \gnl
\gvac{1} \hspace{-0,34cm} \grcm \gcn{1}{1}{1}{1} \gnl
\gcn{1}{1}{3}{3} \gvac{1} \glm \gnl
\gvac{1}\gob{1}{N}\gvac{1}\gob{1}{M}
\gend\stackrel{\Phi_{H,M}}{\stackrel{nat.}{=}}
\gbeg{5}{9}
\got{2}{H} \got{1}{M} \got{1}{N} \gnl
\gcmu \gcl{1} \gcl{1} \gnl
\gcl{1} \gibr \gcl{1} \gnl
\glm \glm \gnl
\gvac{1} \gcn{1}{1}{1}{3} \gvac{1} \grcm \gnl
\gvac{2} \gbr \gcl{1} \gnl
\gvac{2} \gcl{1} \gbr \gnl
\gvac{1} \gcn{1}{1}{3}{3} \gvac{1} \glm \gnl
\gvac{2}\gob{1}{N}\gvac{1}\gob{1}{M}
\gend\stackrel{N (\ref{YD-S})}{=}
\gbeg{7}{15}
\got{2}{H} \got{1}{M} \gvac{2} \got{1}{N} \gnl
\gcmu \gcl{1} \gvac{2} \gcl{5} \gnl
\gcl{1} \gibr \gnl
\glm \gcn{1}{1}{1}{2} \gnl
\gvac{1} \gcl{5} \gcmu \gnl
\gvac{2} \gcl{1} \gcn{1}{1}{1}{2} \gnl
\gvac{2} \gmp{-} \gcmu \grcm \gnl
\gvac{2} \gcl{1} \gcl{1} \gbr \gcl{1} \gnl
\gvac{2} \gcn{1}{1}{1}{3} \glm \gmu \gnl
\gvac{1} \gcn{1}{1}{1}{3} \gvac{1} \gibr \gcn{1}{1}{2}{1} \gnl
\gvac{2} \gbr \gibr \gnl
\gvac{2} \gcl{3} \gcn{1}{1}{1}{2} \gmu \gnl
\gvac{4} \hspace{-0,2cm} \gbr \gnl
\gvac{4} \glm \gnl
\gvac{2}\gob{2}{N} \gob{3}{M}
\gend\stackrel{nat.}{\stackrel{coass.}{=}}
\gbeg{8}{14}
\gvac{1} \got{3}{H} \gvac{1} \got{1}{M} \got{1}{N} \gnl
\gvac{1} \gwcm{3} \gvac{1} \gbr \gnl
\gvac{1} \hspace{-0,22cm} \gcmu \gcn{1}{1}{2}{2} \gvac{1} \gcn{1}{1}{2}{1} \gcn{1}{1}{2}{3} \gnl
\gvac{1} \gcl{1} \gmp{-} \gcmu \grcm \gcl{6} \gnl
\gvac{1} \gbr \gcl{1} \gbr \gcl{1} \gnl
\gvac{1} \gcl{1} \gcn{1}{1}{1}{3} \glm \gmu \gnl
\gvac{1} \gcn{1}{1}{1}{3} \gvac{1} \gbr \gcn{1}{1}{2}{1} \gnl
\gvac{2} \gibr \gbr \gnl
\gvac{2} \gcl{5} \gibr \gcn{1}{1}{1}{3} \gnl
\gvac{3} \gmu \gvac{1} \glm \gnl
\gvac{3} \gcn{1}{2}{2}{7} \gvac{3} \gcl{2} \gnl
\gnl
\gvac{6} \glm \gnl
\gvac{2} \gob{1}{N} \gvac{4} \gob{1}{M}
\gend
$$

$$
\stackrel{mod.}{\stackrel{ass.}{=}}
\gbeg{8}{15}
\gvac{1} \got{3}{H} \gvac{1} \got{1}{M} \got{1}{N} \gnl
\gvac{1} \gwcm{3} \gvac{1} \gbr \gnl
\gvac{1} \hspace{-0,22cm} \gcmu \gcn{1}{1}{2}{2} \gvac{1} \gcn{1}{1}{2}{1} \gcn{1}{1}{2}{3} \gnl
\gvac{1} \gcl{1} \gmp{-} \gcmu \grcm \gcl{10} \gnl
\gvac{1} \gbr \gcl{1} \gbr \gcl{1} \gnl
\gvac{1} \gcl{1} \gcn{1}{1}{1}{3} \glm \gmu \gnl
\gvac{1} \gcn{1}{1}{1}{3} \gvac{1} \gbr \gcn{1}{1}{2}{1} \gnl
\gvac{2} \gibr \gbr \gnl
\gvac{2} \gcl{6} \gibr \gcl{1} \gnl
\gvac{3} \gcn{1}{1}{1}{2} \gmu \gnl
\gvac{4} \hspace{-0,22cm} \gmu \gnl
\gvac{4} \gcn{2}{2}{2}{6} \gnl
\gnl
\gvac{7} \hspace{-0,34cm} \glm \gnl
\gvac{3} \gob{1}{N} \gvac{4} \gob{1}{M}
\gend\stackrel{\Phi_{H,N}}{\stackrel{\Phi_{H,H}}{\stackrel{(\ref{antipode2})}{=}}}
\gbeg{5}{9}
\got{2}{H} \got{1}{M} \got{1}{N} \gnl
\gcn{1}{1}{2}{2} \gvac{1} \gbr \gnl
\gcmu \grcm \gcn{1}{1}{-1}{1} \gnl
\gcl{1} \gbr \gcl{1} \gcl{3} \gnl
\glm \gmu \gnl
\gvac{1} \gcl{3} \gcn{1}{1}{2}{3} \gnl
\gvac{3} \glm \gnl
\gvac{4} \gcl{1} \gnl
\gvac{1} \gob{1}{N} \gvac{2} \gob{1}{M}
\gend\stackrel{mod.}{=}
\gbeg{5}{11}
\got{1}{H}\gvac{1} \got{1}{M} \got{1}{N} \gnl
\gcl{4} \gvac{1} \gcl{1} \gcl{1} \gnl
\gvac{2} \gbr \gnl
\gvac{1} \gcn{1}{1}{3}{2} \gcn{1}{1}{3}{4} \gnl
\gvac{2} \hspace{-0,34cm} \grcm \gcn{1}{1}{1}{1} \gnl
\hspace{-0,14cm} \gcmu \gcl{1} \glm \gnl
\gcl{1} \gbr \gvac{1} \gcl{2} \gnl
\gcl{1} \gcl{1} \gcn{1}{1}{1}{3} \gnl
\glm \gvac{1} \glm \gnl
\gvac{1} \gcl{1} \gvac{2} \gcl{1} \gnl
\gvac{1} \gob{1}{N} \gvac{2} \gob{1}{M.}
\gend
$$
The $H^{op}$-colinearity of $\Phi^*$ follows from:
$$
\gbeg{4}{10}
\got{1}{M} \got{1}{N} \gnl
\gbr \gnl
\gcn{1}{1}{1}{0} \gcn{1}{1}{1}{2} \gnl
\hspace{-0,34cm} \grcm \gcn{1}{1}{1}{1} \gnl
\gcn{1}{1}{1}{1} \glm \gnl
\grcm \grcm \gnl
\gcl{3} \gbr \gcl{1} \gnl
\gvac{1} \gcl{2} \gbr \gnl
\gvac{2} \gmu \gnl
\gob{1}{N} \gob{1}{M} \gob{2}{H}
\gend\stackrel{comod.}{\stackrel{M (\ref{YD-S*})}{=}}
\gbeg{7}{17}
\gvac{1} \got{1}{M} \got{1}{N} \gnl
\gvac{1} \gbr \gnl
\gvac{1} \gcn{1}{1}{1}{0} \gcn{1}{1}{1}{2} \gnl
\gvac{1} \hspace{-0,36cm} \grcm \gcn{2}{2}{1}{3} \gnl
\gvac{2} \hspace{-0,34cm} \gcn{1}{12}{0}{0} \hspace{-0,44cm} \gcmu \gnl
\gvac{3} \gcl{6} \gcn{1}{1}{1}{2} \gcn{2}{2}{2}{4} \gnl
\gvac{4} \gcmu \gnl
\gvac{4} \gcl{1} \gcn{1}{1}{1}{2} \gcn{1}{1}{2}{3} \gnl
\gvac{4} \gmp{-} \gcmu \grcm \gnl
\gvac{4} \gcl{1} \gcl{1} \gibr \gcl{1} \gnl
\gvac{4} \gcn{1}{1}{1}{3} \glm \gmu \gnl
\gvac{3} \gcn{1}{1}{1}{3} \gvac{1} \gbr \gcn{1}{1}{2}{1} \gnl
\gvac{4} \gbr \gbr \gnl
\gvac{4} \gcl{3} \gcn{1}{1}{1}{2} \gmu \gnl
\gvac{6} \hspace{-0,2cm} \gbr \gnl
\gvac{6} \gmu \gnl
\gvac{3}\gob{1}{N} \gob{2}{M} \gob{2}{H}
\gend\stackrel{coass.}{\stackrel{ass.}{=}}
\gbeg{7}{17}
\gvac{1} \got{1}{M} \got{1}{N} \gnl
\gvac{1} \gbr \gnl
\gvac{1} \gcn{1}{1}{1}{0} \gcn{1}{1}{1}{2} \gnl
\gvac{1} \hspace{-0,34cm} \grcm \gcn{2}{2}{1}{5} \gnl
\gvac{1} \gcl{12} \gcn{1}{1}{1}{3} \gnl
\gvac{2} \gwcm{3} \gcn{1}{1}{1}{2} \gnl
\gvac{2} \hspace{-0,34cm} \gcmu \gcmu \grcm \gnl
\gvac{2} \gcl{1} \gmp{-} \gcl{1} \gibr \gcl{1} \gnl
\gvac{2} \gcl{1} \gcn{1}{1}{1}{3} \glm \gmu \gnl
\gvac{2} \gcn{1}{1}{1}{3} \gvac{1} \gbr \gcn{1}{1}{2}{1} \gnl
\gvac{3} \gbr \gcl{1} \gcl{4} \gnl
\gvac{3} \gcl{5} \gbr \gnl
\gvac{4} \gmu \gnl
\gvac{4} \gcn{1}{1}{2}{3} \gnl
\gvac{5} \gbr \gnl
\gvac{5} \gmu \gnl
\gvac{1} \gob{2}{N} \gob{1}{M} \gvac{1} \gob{2}{H}
\gend\stackrel{\Phi_{H,H}}{\stackrel{(\ref{antipode2})}{=}}
\gbeg{4}{10}
\gvac{1} \got{1}{M} \got{1}{N} \gnl
\gvac{1} \gbr \gnl
\gvac{1} \gcn{1}{1}{1}{0} \gcn{1}{1}{1}{2} \gnl
\gvac{1} \hspace{-0,34cm} \grcm \gcn{1}{2}{1}{3} \gnl
\gvac{1} \gcl{5} \gcn{1}{1}{1}{2} \gnl
\gvac{2} \gcmu \grcm \gnl
\gvac{2} \gcl{1} \gbr \gcl{1} \gnl
\gvac{2} \glm \gmu \gnl
\gvac{3} \gcl{1} \gcn{1}{1}{2}{2} \gnl
\gvac{1} \gob{1}{N} \gvac{1} \gob{1}{M} \gob{2}{H}
\gend
$$

$$
\stackrel{comod.}{\stackrel{nat.}{=}}
\gbeg{5}{9}
\gvac{1} \got{1}{M} \got{1}{N} \gnl
\gvac{1} \gcl{1} \grcm \gnl
\gvac{1} \gbr \gcl{1} \gnl
\gcn{1}{1}{3}{1} \gvac{1} \gbr \gnl
\grcm \gcl{1} \grcm \gnl
\gcl{3} \gcl{1} \gbr \gcl{1} \gnl
\gvac{1} \glm \gmu \gnl
\gvac{2} \gcl{1} \gcn{1}{1}{2}{2} \gnl
\gob{1}{N} \gvac{1} \gob{1}{M} \gob{2}{H}
\gend\stackrel{\Phi_{H,M}}{\stackrel{nat.}{=}}
\gbeg{5}{8}
\gvac{1} \got{1}{M} \got{1}{N} \gnl
\gvac{1} \gcl{1} \grcm \gnl
\gvac{1} \gbr \gcn{1}{1}{1}{3} \gnl
\gcn{1}{1}{3}{1} \gvac{1} \grcm \gcl{1} \gnl
\grcm \gcl{1} \gibr \gnl
\gcl{2} \glm \gmu \gnl
\gvac{2} \gcl{1} \gcn{1}{1}{2}{2} \gnl
\gob{1}{N} \gvac{1} \gob{1}{M} \gob{2}{H}
\gend\stackrel{\Phi_{H,H}}{\stackrel{nat.}{=}}
\gbeg{5}{9}
\got{1}{M} \gvac{1} \got{1}{N} \gnl
\grcm \grcm \gnl
\gcl{2} \gbr \gcl{1} \gnl
\gvac{1} \gcl{1} \gbr \gnl
\gbr \gmu \gnl
\grcm \gcn{1}{1}{-1}{1} \gcn{1}{3}{0}{0} \gnl
\gcl{2} \glm \gnl
\gvac{2} \gcl{1} \gnl
\gob{1}{N} \gvac{1} \gob{1}{M} \gob{1}{\hspace{-0,26cm}H.}
\gend
$$
The proof that the inverse of $\Phi^*$ is given as in the announcement of the claim is straightforward.
\qed\end{proof}

\begin{rem} \rmlabel{Phi-mix-changed}
Note that because of the assumption that $\Phi_{H,M}$ is symmetric, instead of
$\Phi^{1+}:=\Phi^*$ in \prref{prva cat} we can also consider the braiding:
$$
\Phi^{1-}_{M, N}=
\gbeg{4}{6}
\gvac{1} \got{1}{M} \got{1}{N} \gnl
\gvac{1} \gibr \gnl
\gcn{1}{1}{3}{2} \gcn{1}{1}{3}{4} \gnl
\gvac{1} \hspace{-0,34cm} \grcm \gcn{1}{1}{1}{1} \gnl
\gcn{1}{1}{3}{3} \gvac{1} \glm \gnl
\gob{3}{N} \gob{1}{M.}
\gend
$$
\end{rem}

\begin{rem} \rmlabel{lr-Hcop-cat}
With the same conditions as in \prref{prva cat} one has that the category ${}_{H^{cop}}\YD(\C)^H$
is braided monoidal with braiding and its inverse given by:
$$
\Phi^{2+}_{M, N}=
\gbeg{4}{6}
\got{1}{M} \got{3}{N} \gnl
\grcm \gcn{1}{1}{1}{1} \gnl
\gcn{1}{1}{1}{1} \glm \gnl
\gcn{1}{1}{1}{2} \gcn{1}{1}{3}{2} \gnl
\gvac{1} \hspace{-0,34cm} \gbr \gnl
\gob{3}{N} \gob{1}{\hspace{-0,6cm}M}
\gend\quad\textnormal{and}\quad
(\Phi^{2+}_{M, N})^{-1}=
\gbeg{4}{7}
\got{2}{N} \got{1}{\hspace{-0,3cm}M} \gnl
\gvac{1} \hspace{-0,34cm} \gibr \gnl
\gcn{1}{1}{3}{2} \gcn{1}{1}{3}{4} \gnl
\gvac{1} \hspace{-0,34cm} \grcm \gcn{1}{1}{1}{1} \gnl
\gvac{1} \gcl{1} \gmp{+} \gcl{1} \gnl
\gcn{1}{1}{3}{3} \gvac{1} \glm \gnl
\gvac{1} \gob{1}{M} \gob{3}{N}
\gend
$$
for $M, N\in {}_{H^{cop}}\YD(\C)^H$. 
Analogously as in \rmref{Phi-mix-changed}, the braiding $\Phi^{2+}$ can be taken in the form
$\Phi^{2-}$. Note that ${}_{H^{cop}}\YD(\C)^H$ is not braided by $\Phi^{1\pm}$, since $\Phi^{1\pm}$
is not left $H^{cop}$-linear even if $\C\w=\w Vec$, the category of vector spaces. Thus the identity
functor $\Id\w: {}_H\YD(\C)^{H^{op}} \to {}_{H^{cop}}\YD(\C)^H$ is not an isomorphism of braided monoidal
categories although it is monoidal.
\end{rem}

\section{Bicrossproducts in braided monoidal categories} \selabel{bicruz}

\setcounter{equation}{0}

Bicrossproducts in braided monoidal categories (also called cross product bialgebras)
were treated in \cite{ZC, BD}.
We recall here bicrossproducts with trivial coactions. Let $B$ and $H$ be bialgebras
in $\C$, where $B$ is a left $H$-module coalgebra and $H$ is a right $B$-module coalgebra.
Assume further that the following conditions are fulfilled:
$$
\gbeg{5}{8}
\got{2}{H} \got{2}{B} \got{1}{B} \gnl
\gcmu \gcmu \gcl{4} \gnl
\gcl{1} \gbr \gcl{1} \gnl
\glm \grm \gnl
\gvac{1} \gcn{2}{2}{1}{3} \gcn{1}{1}{-1}{1} \gnl
\gvac{3} \glm \gnl
\gvac{2} \gwmu{3} \gnl
\gvac{3} \gob{1}{B}
\gend=
\gbeg{4}{7}
\got{1}{H} \got{3}{B} \got{1}{B} \gnl
\gcn{2}{2}{1}{3} \gwmu{3} \gnl
\gvac{3} \gcl{2} \gnl
\gcn{1}{1}{3}{5} \gnl
\gvac{2} \glm \gnl
\gvac{3} \gcl{1} \gnl
\gvac{3} \gob{1}{B}
\gend
\quad ; \quad\quad
\gbeg{5}{8}
\got{1}{H} \got{2}{H} \got{2}{B} \gnl
\gcl{1} \gcmu \gcmu \gnl
\gcl{1} \gcl{1} \gbr \gcl{1} \gnl
\gcl{1} \glm \grm \gnl
\gcl{1} \gcn{1}{1}{3}{1} \gcn{2}{2}{3}{1} \gnl
\grm \gnl
\gwmu{3} \gnl
\gob{3}{H}
\gend=
\gbeg{5}{7}
\got{1}{H} \got{3}{H} \got{1}{B} \gnl
\gwmu{3} \gcn{2}{2}{3}{1} \gnl
\gvac{1} \gcl{2} \gnl
\gvac{1} \gcn{1}{1}{5}{3} \gnl
\gvac{1} \grm \gnl
\gvac{1} \gcl{1} \gnl
\gvac{1} \gob{1}{B}
\gend
$$

$$
\gbeg{4}{6}
\got{2}{H} \got{2}{B} \gnl
\gcmu \gcmu \gnl
\gcl{1} \gbr \gcl{1} \gnl
\glm \grm \gnl
\gvac{1} \gbr \gnl
\gvac{1} \gob{1}{H} \gob{1}{B}
\gend=
\gbeg{4}{6}
\got{2}{H} \got{2}{B} \gnl
\gcmu \gcmu \gnl
\gcl{1} \gbr \gcl{1} \gnl
\grm \glm \gnl
\gcl{1} \gvac{2} \gcl{1} \gnl
\gob{1}{H} \gvac{2} \gob{1}{B}
\gend
\quad \textnormal{and} \quad\quad
\gbeg{2}{4}
\got{1}{H} \got{1}{}  \gnl
\gcl{1} \gu{1} \gnl
\glm \gnl
\gvac{1} \gob{1}{B}
\gend=
\gbeg{1}{4}
\got{1}{H} \gnl
\gcu{1}  \gnl
\gu{1}  \gnl
\gob{1}{B}
\gend \quad;
\quad
\gbeg{2}{4}
\got{1}{} \got{1}{B}  \gnl
\gu{1} \gcl{1} \gnl
\grm \gnl
\gob{1}{H}
\gend=
\gbeg{1}{4}
\got{1}{B} \gnl
\gcu{1}  \gnl
\gu{1}  \gnl
\gob{1}{H}
\gend
$$
Bialgebras $B$ and $H$ described above are called a {\em matched pair of bialgebras in $\C$}.
We define $B\bowtie H$ as the tensor product $B\ot H$ endowed with the codiagonal comultiplication,
usual unit $\eta$ and counit $\Epsilon$ (that is, $\eta_B\ot\eta_H$ and $\Epsilon_B\ot\Epsilon_H$
respectively), and associative multiplication given by:
$$\nabla_{B\bowtie H}=
\gbeg{6}{6}
\got{1}{B} \got{2}{H} \got{2}{B} \got{1}{H} \gnl
\gcl{1} \gcmu \gcmu \gcl{3} \gnl
\gcl{1} \gcl{1} \gbr \gcl{1} \gnl
\gcl{1} \glm \grm \gnl
\gwmu{3} \gwmu{3} \gnl
\gvac{1} \gob{1}{B} \gvac{2} \gob{1}{H.}
\gend
$$
In \cite[Theorem 1.4]{ZC} it is proved that $B\bowtie H$ is a bialgebra.
Moreover, if both $B$ and $H$ are Hopf algebras,
by \cite[Theorem 1.5]{ZC} we know that so is $B\bowtie H$ with the antipode given by:
$$S_{B\bowtie H}=
\gbeg{4}{10}
\gvac{1} \got{1}{B} \got{1}{H} \gnl
\gvac{1} \gbr \gnl
\gcn{1}{1}{3}{2} \gcn{1}{1}{3}{4} \gnl
\gcmu \gcmu \gnl
\gmp{S} \gmp{S} \gmp{+} \gmp{+} \gnl
\gcl{1} \gibr \gcl{1} \gnl
\grm \glm \gnl
\gcn{1}{1}{1}{3} \gcn{1}{1}{5}{3} \gnl
\gvac{1} \gbr \gnl
\gvac{1} \gob{1}{B} \gob{1}{H}
\gend=
\gbeg{4}{9}
\gvac{1} \got{1}{B} \got{1}{H} \gnl
\gvac{1} \gbr \gnl
\gvac{1} \gmp{S} \gmp{+} \gnl
\gvac{1} \gcn{1}{1}{1}{0} \gcn{1}{1}{1}{2} \gnl
\gcmu \gcmu \gnl
\gcl{1} \gbr \gcl{1} \gnl
\glm \grm \gnl
\gvac{1} \gcl{1} \gcl{1} \gnl
\gvac{1} \gob{1}{B} \gob{1}{H.}
\gend
$$
From here it follows:
\begin{equation}\eqlabel{1S}
S_{B\bowtie H}(\eta_B\ot H)=\eta_B\ot S_H.
\end{equation}

\vspace{0,2cm}

For a module $M$ over $B\bowtie H$ in $\C$ we will consider:
\begin{equation}\eqlabel{BHmod-conv}
\gbeg{3}{5}
\got{1}{B\bowtie H} \got{3}{M} \gnl
\gcn{1}{1}{1}{3} \gvac{1} \gcl{1} \gnl
\gvac{1} \glm \gnl
\gvac{2} \gcl{1} \gnl
\gvac{1} \gob{3}{M}
\gend=
\gbeg{3}{5}
\got{1}{B} \got{1}{H} \got{1}{M} \gnl
\gcl{1} \glm \gnl
\gcn{1}{1}{1}{3} \gvac{1} \gcl{1} \gnl
\gvac{1} \glm \gnl
\gvac{1} \gob{3}{M.}
\gend
\end{equation}

\vspace{0,2cm}

\begin{lma} \lelabel{matched pair mod}
Let $B$ and $H$ be a matched pair of bialgebras. An object $M$ is a module over $B\bowtie H$ in $\C$
if and only if it is an $H$- and a $B$-module satisfying the compatibility condition:
\begin{equation}\eqlabel{B tie H -mod}
\gbeg{5}{6}
\got{1}{H} \got{1}{B} \got{3}{M} \gnl
\gcl{1} \gcn{1}{1}{1}{3} \gvac{1} \gcl{1} \gnl
\gcn{1}{2}{1}{5} \gvac{1} \glm \gnl
\gvac{3} \gcl{1} \gnl
\gvac{2} \glm \gnl
\gvac{2} \gob{3}{M}
\gend=
\gbeg{5}{9}
\got{2}{H} \got{2}{B} \got{1}{M} \gnl
\gcmu \gcmu \gcl{7} \gnl
\gcl{1} \gbr \gcl{1} \gnl
\glm \grm \gnl
\gvac{1} \gcl{1} \gcn{1}{1}{1}{3} \gnl
\gvac{1} \gcn{1}{2}{1}{5} \gvac{1} \glm \gnl
\gvac{4} \gcl{1} \gnl
\gvac{3} \glm \gnl
\gvac{3} \gob{3}{M.}
\gend
\end{equation}
\end{lma}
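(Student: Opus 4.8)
The plan is to recognise this as the standard description of modules over a ``bismash''\,-type product, and to reduce everything to two elementary identities in $B\bowtie H$ obtained by inserting units into the displayed multiplication $\nabla_{B\bowtie H}$. The two algebra maps I would use are $\iota_B=\id_B\ot\eta_H:B\to B\bowtie H$ and $\iota_H=\eta_B\ot\id_H:H\to B\bowtie H$; that these are unital algebra morphisms is a short check from the matched-pair unit conditions together with the fact that $\eta_H$ acts trivially on $B$ and $\eta_B$ on $H$. The two identities are: $\iota_B(b)\iota_H(h)=b\ot h$, and $\iota_H(h)\iota_B(b)$ is the image of $h\ot b$ under the composite $H\ot B\to B\ot H$, namely $(\triangleright\ot\triangleleft)(H\ot\Phi_{H,B}\ot B)(\Delta_H\ot\Delta_B)$, that appears on the right-hand side of \equref{B tie H -mod} just before the two $M$-actions (here $\triangleright$ is the left $H$-action on $B$ and $\triangleleft$ the right $B$-action on $H$). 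Both reduce, via the module-coalgebra counit axioms and the matched-pair unit conditions, to statements with no residual braiding, since the braiding inside $\nabla_{B\bowtie H}$ then has a unit object on one of its legs and collapses ($\Phi_{X,I}=\id=\Phi_{I,X}$).

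For the ``only if'' direction I would take an action $\mu_M$ of $B\bowtie H$ on $M$ and set $\mu^B_M:=\mu_M(\iota_B\ot M)$ and $\mu^H_M:=\mu_M(\iota_H\ot M)$; restriction along $\iota_B$ and $\iota_H$ makes these a $B$- and an $H$-module structure on $M$. Associativity of $\mu_M$ together with the first identity above yields $\mu_M=\mu^B_M(B\ot\mu^H_M)$, which is precisely convention \equref{BHmod-conv}. Applying associativity of $\mu_M$ to the pair $\bigl(\iota_H(h),\iota_B(b)\bigr)$ and substituting the second identity turns the equality $\iota_H(h)\cdot\bigl(\iota_B(b)\cdot m\bigr)=\bigl(\iota_H(h)\iota_B(b)\bigr)\cdot m$ into exactly the compatibility condition \equref{B tie H -mod}.

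For the ``if'' direction I would start from a $B$- and an $H$-module structure on $M$ satisfying \equref{B tie H -mod} and define $\mu_M$ by \equref{BHmod-conv}, that is $\mu_M:=\mu^B_M(B\ot\mu^H_M)$. Unitality is immediate because $\eta_{B\bowtie H}=\eta_B\ot\eta_H$ and $\eta_B,\eta_H$ act as identities. For associativity I would substitute the displayed formula for $\nabla_{B\bowtie H}$ into $\mu_M(\nabla_{B\bowtie H}\ot M)$ and use associativity of the $B$-action and of the $H$-action separately to split the two products occurring in $\nabla_{B\bowtie H}$; the only relation then needed to match $\mu_M\bigl((B\bowtie H)\ot\mu_M\bigr)$ is \equref{B tie H -mod} with its argument $m$ replaced by $h'\cdot m$, the residual $H$-action coming from the second tensor factor — which is the hypothesis.

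The computation is routine, and the main obstacle is purely the diagrammatic bookkeeping needed to establish the two unit-insertion identities of the first paragraph: one must expand $\nabla_{B\bowtie H}$, observe that the interior comultiplications of $B$ and of $H$ are absorbed by counits through the module-coalgebra axioms, and check that the $B$--$H$ braiding inside $\nabla_{B\bowtie H}$ disappears because one of its legs carries a unit. Once these are in place, both implications are formal manipulations with associativity of the two module actions.
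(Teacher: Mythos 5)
Your proposal is correct and follows essentially the same route as the paper: the paper likewise writes out the associativity of the $B\bowtie H$-action, specializes it to $\eta_B\ot H\ot B\ot\eta_H$ (your pair $\bigl(\iota_H(h),\iota_B(b)\bigr)$) to extract \equref{B tie H -mod}, and for the converse reassembles the general associativity from \equref{B tie H -mod} together with the separate $H$- and $B$-module axioms. The only cosmetic difference is that the paper posits \equref{BHmod-conv} as a standing convention while you derive it from the identity $\iota_B(b)\iota_H(h)=b\ot h$; this changes nothing of substance.
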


\begin{proof}
An object $M$ is a module over $B\bowtie H$ if and only if:
$$
\gbeg{7}{10}
\got{1}{B} \got{1}{H} \got{1}{B} \got{1}{H} \got{3}{M} \gnl
\gcl{3} \gcl{2} \gcl{1} \gcn{1}{1}{1}{3} \gvac{1} \gcl{1} \gnl
\gvac{2} \gcn{1}{2}{1}{5} \gvac{1} \glm \gnl
\gvac{1} \gcn{2}{3}{1}{7} \gvac{2} \gcl{1} \gnl
\gcn{2}{2}{1}{5} \gvac{2} \glm \gnl
\gvac{5} \gcl{1} \gnl
\gvac{2} \gcn{2}{2}{1}{5} \glm \gnl
\gvac{5} \gcl{1} \gnl
\gvac{4} \glm \gnl
\gvac{4} \gob{3}{M}
\gend=
\gbeg{5}{6}
\got{1}{B\bowtie H} \got{3}{B\bowtie H} \got{1}{M} \gnl
\gcn{1}{1}{1}{3} \gvac{1} \gcn{1}{1}{1}{3} \gvac{1} \gcl{1} \gnl
\gvac{1} \gcn{1}{2}{1}{5} \gvac{1} \glm \gnl
\gvac{4} \gcl{1} \gnl
\gvac{3} \glm \gnl
\gvac{3} \gob{3}{M}
\gend=
\gbeg{5}{6}
\got{1}{B\bowtie H} \got{3}{B\bowtie H} \got{1}{M} \gnl
\gwmu{3} \gvac{1} \gcl{3} \gnl
\gvac{1} \gcn{1}{2}{1}{5} \gnl \gnl
\gvac{3} \glm \gnl
\gvac{3} \gob{3}{M}
\gend=
\gbeg{7}{10}
\got{1}{B} \got{2}{H} \got{2}{B} \got{1}{H} \got{1}{M} \gnl
\gcl{1} \gcmu \gcmu \gcl{3} \gcl{7} \gnl
\gcl{1} \gcl{1} \gbr \gcl{1} \gnl
\gcl{1} \glm \grm \gnl
\gwmu{3} \gwmu{3} \gnl
\gvac{1} \gcn{2}{3}{1}{9} \gvac{1} \gcn{1}{1}{1}{3} \gnl
\gvac{5} \glm \gnl \gnl
\gvac{5} \glm \gnl
\gvac{5} \gob{3}{M.}
\gend
$$
Applying this to $\eta_B\ot H\ot B\ot\eta_H$, we obtain \equref{B tie H -mod}. For the converse
observe that the above equality follows from \equref{B tie H -mod} and the $H$- and $B$-module
properties of $M$.
\qed\end{proof}

We now want to consider a particular case of a bicrossproduct - the Drinfel'd double of $H$.
A tedious direct check, which we omit here for practical reasons, shows:

\begin{prop} \prlabel{Drinfelddouble}
Let $H\in\C$ be a finite Hopf algebra with a bijective antipode and the braiding such that
$\Phi_{H,H}$ and $\Phi_{H,H^*}$ are symmetric. Then $B\bowtie H$ is a bicrossproduct with
$B=(H^{op})^*$ and the actions:
$$
\gbeg{3}{5}
\got{1}{H^{op}} \got{2}{\hspace{-0,2cm}H} \got{1}{\hspace{-0,6cm}B} \gnl
\gcl{1} \glm \gnl
\gcn{1}{1}{1}{3}\gcn{1}{1}{3}{1} \gnl
\gvac{1}  \gmp{\crta\ev} \gnl
\gob{1}{}
\gend=
\gbeg{3}{8}
\got{2}{H^{op}} \got{1}{\hspace{-0,3cm}H} \got{2}{B} \gnl
\gcl{1} \gcmu \gcn{1}{5}{2}{2} \gnl
\gmu \gmp{-} \gnl
\gcn{1}{1}{2}{3} \gvac{1} \gcl{1} \gnl
\gvac{1} \gbr \gnl
\gvac{1} \gmu \gnl
\gvac{1} \gcn{1}{1}{2}{4} \gcn{1}{1}{4}{2} \gnl
\gvac{3} \hspace{-0,34cm} \gmp{\crta\ev} \gnl
\gob{1}{}
\gend
\qquad\qquad\textnormal{and}\qquad
\gbeg{3}{6}
\got{1}{H} \got{1}{B} \gnl
\gcl{1} \gcl{1} \gnl
\grm \gnl
\gcl{2} \gnl
\gob{1}{H}
\gend=
\gbeg{3}{9}
\got{2}{} \got{1}{H} \gvac{1} \got{1}{B} \gnl
\gvac{1} \gwcm{3}  \gcl{5} \gnl 
\gvac{1} \hspace{-0,34cm} \gcmu \gvac{1} \hspace{-0,2cm} \gmp{-} \gnl
\gvac{2} \hspace{-0,36cm} \gibr \gcn{1}{1}{2}{1} \gnl
\gvac{2} \gcl{4} \gbr \gnl
\gvac{3} \gmu \gnl
\gvac{3} \gcn{1}{1}{2}{4} \gcn{1}{1}{4}{2} \gnl
\gvac{5} \hspace{-0,34cm} \gmp{\crta\ev} \gnl
\gvac{2} \gob{2}{H}
\gend
$$
\end{prop}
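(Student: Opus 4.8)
The plan is to verify that $B=(H^{op})^*$ and $H$, equipped with the two displayed actions, form a matched pair of bialgebras in the sense recalled at the beginning of \seref{bicruz}; \cite[Theorems 1.4 and 1.5]{ZC} then yield at once that $B\bowtie H$ is a bicrossproduct Hopf algebra. First I would record that $B$ is indeed a Hopf algebra in $\C$: since $\Phi_{H,H}$ is symmetric, $H^{op}$ is a Hopf algebra (a bialgebra structure on $A^{op}$ being exactly what symmetricity of $\Phi_{A,A}$ provides, as noted in \seref{prel}), and it is finite because $H$ is, so by the duality facts collected in \ref{in:alg-coalg} the dual $B=(H^{op})^*=[H^{op},I]$ is a finite Hopf algebra, with multiplication, comultiplication, unit, counit and antipode obtained from those of $H^{op}$ via the formulas (\ref{multH*})--(\ref{counitH*}).

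The heart of the proof is then a family of braided-diagram identities of a uniform kind, which I would organize as: (a) the left $H$-action on $B$ is associative and unital; (b) $B$ is a left $H$-module coalgebra, i.e. $\Delta_B$ and $\varepsilon_B$ are left $H$-linear; (c) the right $B$-action on $H$ is associative and unital and $H$ is a right $B$-module coalgebra; (d) the four matched-pair conditions, namely the two mixed module-coalgebra compatibilities together with the two unit normalizations. The decisive device in (a)--(c) is the universal property of the inner hom: a morphism with target $B=[H^{op},I]$ is determined by its composite with the evaluation $[H^{op},I]\ot H^{op}\to I$, and a morphism with target $B\ot B\iso(H\ot H)^*$ by the analogous composite; thus each identity to be proved is replaced by an identity of morphisms into the unit object, which is then deduced from the Hopf-algebra axioms for $H$, (co)associativity, naturality of the braiding, and the auxiliary antipode identity (\ref{antipode2}). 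The symmetricity of $\Phi_{H,H}$ is invoked precisely when one passes between the (co)algebra structure of $H$ and that of $H^{op}$, and the symmetricity of $\Phi_{H,H^*}$ precisely when one needs to slide an $H$-strand past a $B$-strand, which the very shape of the two actions forces.

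For (d) I would substitute the explicit actions into the four diagrams of \seref{bicruz}: the unit normalizations are immediate from the normalization of the evaluation and the counit axioms, while the two main compatibilities follow, after dualizing, from the corresponding structure compatibilities of the Hopf algebra $H$, once more rearranging strands by means of the two symmetricity hypotheses. Once the matched pair is in place, \leref{matched pair mod} identifies modules over $B\bowtie H$ with the expected compatible pairs of $B$- and $H$-module structures, which is the form in which the result is used afterwards.

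The obstacle here is not conceptual: this is precisely the ``tedious direct check'' referred to in the statement. The real difficulty is bookkeeping --- laying out the (long) computations in (b) and (d) so that the two symmetricity assumptions, and nothing stronger, are used, each only where it is genuinely forced, rather than silently working as though $\C$ were symmetric. For that reason I would carry out these verifications entirely in braided-diagram form and flag explicitly the steps at which $\Phi_{H,H}$ or $\Phi_{H,H^*}$ being symmetric is essential.
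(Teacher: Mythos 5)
Your plan is correct and coincides with the paper's approach: the paper itself omits the verification as a ``tedious direct check'' of the matched-pair axioms and only records the auxiliary identities \equref{B-codiag} and \equref{codiag B}, which are exactly the evaluation/strand-sliding steps you isolate as the places where the symmetricity of $\Phi_{H,H}$ and $\Phi_{H,H^*}$ is genuinely used. Reducing identities landing in $B$ or $B\ot B\iso (H\ot H)^*$ to identities into $I$ via the universal property of the inner hom, and then concluding with \cite[Theorems 1.4 and 1.5]{ZC}, is precisely the intended argument.
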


The bialgebra $B\bowtie H$ is called {\em the Drinfel'd double of $H$} and is denoted by $D(H)$.
Throughout, apart from assuming that our Hopf algebras have a bijective antipode, when we deal with
$D(H)$ we will also assume that $H$ is finite.
As we mentioned before, the antipode of a finite Hopf algebra is bijective if e.g. $\C$ has equalizers.

Note that $B$ is a
bialgebra since $\Phi_{H,H}$ is symmetric (we commented this before \inref{inner-hom}).
We only point out that in the proof of the above claim one uses the identity that we next present.
Bearing in mind that $B=(H^{op})^*$, we have:
\begin{equation}\eqlabel{B-codiag}
\scalebox{0.84}[0.84]{ \gbeg{4}{4}
\got{2}{B} \got{1}{H}\got{1}{H} \gnl \gcmu \gcl{1} \gcl{2} \gnl \gcl{1} \gbr \gnl \gev \gev \gnl
\gvac{2} \gob{1}{} \gend} \stackrel{(\ref{codiagH*})}{=} \scalebox{0.84}[0.84]{ \gbeg{4}{4}
\got{1}{\hspace{-0,34cm}B} \got{1}{H}\got{1}{H} \gnl
\gcn{1}{2}{0}{0} \gbr \gnl \gvac{1} \gmu \gnl \hspace{-0,2cm} \gwev{3} \gnl
\gvac{2} \gob{2}{} \gend}
\end{equation}
Composing this from above (in the braided diagram orientation) with $\Phi_{H\ot H, B}$ and
applying $\crta\ev=\ev\Phi$ due to \equref{crta ev}, by naturality we obtain:
\begin{equation}\eqlabel{codiag B}
\scalebox{0.84}[0.84]{
\gbeg{4}{6}
\got{1}{H} \got{1}{H} \got{2}{B} \gnl
\gbr \gcn{1}{2}{2}{2} \gnl
\gmu \gnl
\gcn{1}{1}{2}{4} \gcn{1}{1}{4}{2} \gnl
\gvac{2} \hspace{-0,34cm} \gmp{\crta\ev} \gnl
\gvac{1} \gob{2}{} \gend}
=\scalebox{0.84}[0.84]{ \gbeg{4}{7}
\gvac{1} \got{1}{H} \got{1}{H} \got{1}{B} \gnl
\gvac{1} \gcl{1} \gbr \gnl
\gvac{1} \gbr \gcl{2} \gnl
\gvac{1} \gcn{1}{1}{1}{0} \gcl{1} \gnl
\gcmu \gcl{1} \gcl{2} \gnl \gcl{1} \gbr \gnl \gev \gev \gnl
\gvac{2} \gob{1}{} \gend} \stackrel{\Phi_{H,H^*}}{\stackrel{nat.}{=}}
\scalebox{0.84}[0.84]{
\gbeg{5}{6}
\got{1}{H} \got{1}{H} \got{2}{B} \gnl
\gcl{1} \gcl{1} \gcmu \gnl
\gcl{1} \gbr \gcl{1} \gnl
\gbr \gbr \gnl
\gev \gev \gnl
\gvac{2} \gob{2}{} \gend} =
\scalebox{0.84}[0.84]{
\gbeg{4}{5}
\got{1}{H} \got{1}{H} \got{2}{B} \gnl
\gcl{1} \gcl{1} \gcmu \gnl
\gcl{1} \gbr \gcl{1} \gnl
\glmpt \gnot{\hspace{-0,32cm} \crta \ev} \grmpt \glmpt \gnot{\hspace{-0,32cm} \crta \ev} \grmpt \gnl
\gvac{2} \gob{2}{} \gend} \vspace{-0,12cm}
\end{equation}

As a matter of fact the two symmetricity conditions for $\Phi_{H,H}$ and $\Phi_{H,H^*}$
in \prref{Drinfelddouble} are equivalent (in the next Lemma we add the last condition):

\begin{lma} \cite[Lemma 1.1]{Z} \lelabel{Zhang lemma}
The following conditions are equivalent:
\begin{enumerate}
\item $\Phi_{H,H}, \Phi_{H,H^*}$ and $\Phi_{H^*,H^*}$ are symmetric;
\item $\Phi_{H,H}$ is symmetric;
\item $\Phi_{H^*,H^*}$ is symmetric;
\item $(H\ot\ev)(\Phi_{H^*,H}\ot H)=(\ev\ot H)(H^*\ot\Phi_{H,H})$;
\item $(H^*\ot\ev)(\Phi_{H^*,H^*}\ot H)=(\ev\ot H^*)(H^*\ot\Phi_{H^*,H})$;
\item the conditions 4) and 5) hold true;
\item $\Phi_{H,H^*}$ is symmetric.
\end{enumerate}
\end{lma}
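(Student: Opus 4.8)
The statement is \cite[Lemma 1.1]{Z}, and the plan is to reprove it by sorting the seven conditions into a few equivalence classes and bridging them with string-diagram manipulations. First note that $(1)$ and $(6)$ are mere conjunctions: $(1)\Rightarrow(2),(3),(7)$ and $(6)\Leftrightarrow(4)\wedge(5)$ are trivial, so once we know that $(2),(3),(7)$ are mutually equivalent, $(1)$ follows from any one of them, and once we know in addition $(4)\Leftrightarrow(2)$ and $(5)\Leftrightarrow(3)$, condition $(6)$ follows from any of $(2),(3),(4),(5)$. Since $H$ is finite, $H^{*}=[H,I]$ is a two-sided dual object with $H\iso H^{**}$, and hence $(3)$ (resp. $(5)$) is obtained from $(2)$ (resp. $(4)$) by substituting $H^{*}$ for $H$ and $H^{**}\iso H$ for $H^{*}$. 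Thus it suffices to establish the two core equivalences $(2)\Leftrightarrow(7)$ and $(2)\Leftrightarrow(4)$.

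For $(2)\Leftrightarrow(7)$ I would use that M\"uger's centralizer is stable under duals: an object $Y$ satisfies $\Phi_{Y,X}\Phi_{X,Y}=\id_{X\ot Y}$ if and only if it satisfies $\Phi_{Y,X^{*}}\Phi_{X^{*},Y}=\id_{X^{*}\ot Y}$. Diagrammatically one bends the $X$-strand of the full twist $\Phi_{Y,X}\Phi_{X,Y}$ around using $e_{H}$ and $d_{H}$; by naturality of the braiding and the zig-zag identities \equref{dbev=id} and \equref{evdb=id} this carries it to the full twist on $X^{*}\ot Y$ (with over-crossings becoming under-crossings under the bend, which is harmless here since we only care whether the twist is trivial), and \equref{crta ev}, \equref{crta d} keep the accounting of the left/right duality morphisms tidy. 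Taking $X=Y=H$ gives $(2)\Leftrightarrow(7)$, and taking $X=Y=H^{*}$ gives $(3)\Leftrightarrow(7)$ (using $H^{**}\iso H$).

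For $(2)\Leftrightarrow(4)$: the implication $(2)\Rightarrow(4)$ is immediate, since naturality of the braiding with respect to $\ev\colon H^{*}\ot H\to I$ together with the hexagon gives
$$
(\ev\ot H)(H^{*}\ot\Phi_{H,H})=(H\ot\ev)(\Phi_{H^{*},H}\ot H)\,(H^{*}\ot\Phi_{H,H}^{2}),
$$
so when $\Phi_{H,H}^{2}=\id$ the right-hand side reduces to $(H\ot\ev)(\Phi_{H^{*},H}\ot H)$, which is exactly $(4)$. For the converse I would not try to cancel $\ev$ (it is not monic); instead I would feed the two sides of $(4)$ into the endomorphism of $H\ot H$ obtained by first creating an $H\ot H^{*}$ pair to the left with $d_{H}$ and then applying the side of $(4)$ in question to the resulting $H^{*}\ot H\ot H$. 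Using naturality and the zig-zag identities the left-hand side of $(4)$ becomes $\Phi_{H,H}^{-1}$ (the over-crossing $\Phi_{H^{*},H}$, once unbent around $d_{H},e_{H}$, is an under-crossing of $H$ past $H$) while the right-hand side becomes $\Phi_{H,H}$, so $(4)$ forces $\Phi_{H,H}=\Phi_{H,H}^{-1}$, i.e. $(2)$. Replacing $H$ by $H^{*}$ everywhere gives $(3)\Leftrightarrow(5)$, and assembling the pieces proves the lemma. The main obstacle is precisely this converse $(4)\Rightarrow(2)$ (and its mirror $(5)\Rightarrow(3)$): one has to carry out the bend carefully and keep track of which crossings become over- and which become under-crossings, matching the powers of $\Phi$ to the sign conventions fixed in the preliminaries; the centralizer-under-duals step is classical but likewise needs to be written out diagrammatically rather than merely invoked.
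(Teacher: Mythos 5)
The paper itself gives no proof of this lemma --- it is imported verbatim as \cite[Lemma 1.1]{Z} --- so your argument is a reconstruction rather than a variant of an in-paper proof, and it is essentially correct. The two core computations check out: naturality of $\Phi_{-,H}$ at $\ev\colon H^*\ot H\to I$ together with the hexagon gives $(\ev\ot H)=(H\ot\ev)(\Phi_{H^*,H}\ot H)(H^*\ot\Phi_{H,H})$, so the left-hand side of $(4)$ equals $(\ev\ot H)(H^*\ot\Phi_{H,H}^{-1})$; this is exactly your displayed identity for $(2)\Rightarrow(4)$, and precomposing both sides of $(4)$ with $d_H\ot H\ot H$ and using the zig-zag \equref{dbev=id} does turn them into $\Phi_{H,H}^{-1}$ and $\Phi_{H,H}$ respectively, which gives $(4)\Rightarrow(2)$. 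The dual-closure argument for $(2)\Leftrightarrow(7)\Leftrightarrow(3)$ is the standard M\"uger statement that the centralizer relation is stable under duals, and your observation that the partial transpose inverts crossings but preserves triviality of the double braiding is precisely the point that makes it work.

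One step needs tightening: $(5)$ is \emph{not} literally $(4)$ with $H$ replaced by $H^*$. Under that substitution the evaluation would become $e_{H^*}\colon H^{**}\ot H^*\to I$, which under $H\iso H^{**}$ is the \emph{right} evaluation $e'_H=\ev\,\Phi_{H^*,H}$ of \equref{crta ev}, defined on $H\ot H^*$; condition $(5)$ as stated still uses the original $\ev\colon H^*\ot H\to I$ and lives on $H^*\ot H^*\ot H$. So the substitution yields a condition that must itself be matched against $(5)$ at the cost of extra braidings, and the reduction is not free. The fix is cheap: run the $(2)\Leftrightarrow(4)$ argument directly on $(5)$. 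Naturality of $\Phi_{H^*,-}$ at $\ev$ and the other hexagon give $(\ev\ot H^*)(H^*\ot\Phi_{H^*,H})=(H^*\ot\ev)(\Phi_{H^*,H^*}^{-1}\ot H)$, so $(5)$ becomes $(H^*\ot\ev)(\Phi_{H^*,H^*}\ot H)=(H^*\ot\ev)(\Phi_{H^*,H^*}^{-1}\ot H)$; then cancel $\ev$ by tensoring with $H^*$ on the right, precomposing with $H^*\ot H^*\ot d_H$ and using \equref{evdb=id}, which recovers $\Phi_{H^*,H^*}^{\pm 1}$ and hence $(5)\Leftrightarrow(3)$. With that adjustment the proof is complete.
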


One proves similarly:

\begin{lma} \lelabel{H - H* transp.}
Let $M\in\C$ be any object. Then $\Phi_{H,M}$ is symmetric if and only if $\Phi_{H^*,M}$ is symmetric.
\end{lma}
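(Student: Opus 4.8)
**Proof proposal for \leref{H - H* transp.}**

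The plan is to mirror the equivalence ``(2) $\Leftrightarrow$ (7)'' of \leref{Zhang lemma}, but now with a general object $M$ in place of the second copy of $H$ (resp. $H^*$). The statement is symmetric in $H$ and $H^*$ once we recall (from \pref{alg-coalg} and the preliminary on finite objects) that $H$ is canonically isomorphic to $H^{**}$; so it suffices to prove one implication, say: if $\Phi_{H,M}$ is symmetric then $\Phi_{H^*,M}$ is symmetric, and then apply it with $H$ replaced by $H^*$ to get the converse.

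First I would express $\Phi_{H^*,M}$ and its candidate inverse $\Phi_{M,H^*}^{-1}$ in terms of the duality data $e_H=\ev_{H,I}$ and $d_H$, by ``sliding'' the strand $H^*$ around a loop: concretely, insert $(H^*\ot d_H)$ at the top and $(e_H\ot H^*)$ at the bottom (the snake identities \equref{dbev=id}, \equref{evdb=id}) so that the braiding $\Phi_{H^*,M}$ gets rewritten using a braiding of $M$ with the $H$-strand of the dual loop. Naturality of $\Phi$ with respect to the morphisms $e_H:H^*\ot H\to I$ and $d_H:I\to H\ot H^*$ lets one replace, inside such a diagram, a crossing $\Phi_{H^*,M}$ (resp. $\Phi_{M,H^*}$) by a crossing $\Phi_{H,M}$ (resp. $\Phi_{M,H}$) on the companion strand, at the cost of the loop. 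Then the hypothesis $\Phi_{H,M}=\Phi_{M,H}^{-1}$ is applied on that $H$-strand, and finally the snake identities are used again to contract the loop, leaving exactly $\Phi_{M,H^*}^{-1}$ (or, dually, showing $\Phi_{M,H^*}\Phi_{H^*,M}=\id$). This is precisely the ``one proves similarly'' that the paper alludes to just before the statement; the computation is the one in \cite[Lemma 1.1]{Z} with the second tensor factor replaced by a generic $M$, and it requires no Hopf-algebra structure on $M$ — only the braiding naturality and the rigidity of $H$.

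The one genuine point to check — and the only place where something could go wrong — is the compatibility of the two natural equivalences of inner-hom functors $[M,-]\iso\{M,-\}$ (equivalently \equref{crta ev}, \equref{crta d}) with the direction of the braiding powers, i.e. making sure that when we pass between ``$e_H$'' and ``$e_H'$'' we pick up $\Phi_{H^*,H}$ rather than $\Phi_{H^*,H}^{-1}$, matching the sign convention fixed in \pref{r.adj. dual}. Once the loop is set up with the correct handedness, the cancellation is forced and the argument closes; I would therefore organize the proof as: (i) write the diagrammatic identity for $\Phi_{H^*,M}$ in terms of a dual loop, (ii) apply naturality to transfer the crossing to the $H$-strand, (iii) invoke $\Phi_{H,M}=\Phi_{M,H}^{-1}$, (iv) contract the loop via \equref{dbev=id}–\equref{evdb=id} to read off $\Phi_{M,H^*}^{-1}$, and (v) deduce the converse by replacing $H\rightsquigarrow H^*$ and using $H\iso H^{**}$.
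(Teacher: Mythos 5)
Your argument is correct and is exactly what the paper's ``one proves similarly'' (pointing back to \cite[Lemma 1.1]{Z}) intends: the whole content is the standard rigidity computation showing that transparency passes to duals, using only the snake identities \equref{dbev=id}--\equref{evdb=id} and naturality of $\Phi^{\pm 1}$ with respect to $e_H$ and $d_H$, with no Hopf structure on $M$. Your reduction of the converse to the forward implication via $H\iso H^{**}$ is also fine (alternatively one can rerun the same loop argument with the right-dual data $e'_H, d'_H$), and your caution about matching the sign conventions of \equref{crta ev}--\equref{crta d} is the only delicate point and is handled correctly.
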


\begin{rem}
We remark that $(H^{op})^*\iso (H^*)^{cop}$ as coalgebras:
$$\scalebox{0.84}[0.84]{
\gbeg{6}{5}
\got{2}{(H^{op})^*} \got{2}{\hspace{-0,2cm}H^{op}} \got{1}{\hspace{-0,2cm}H^{op}} \gnl
\gcmu \gcl{1} \gcn{1}{1}{2}{1} \gnl
\gcl{1} \gbr \gcl{1} \gnl
\gev \gev \gnl
\gvac{2} \gob{1}{} \gend} =
\scalebox{0.84}[0.84]{
\gbeg{5}{5}
\got{1}{(H^{op})^*} \got{2}{H^{op}} \got{2}{H^{op}} \gnl
\gcl{1} \gwmu{3} \gnl
\gcn{2}{1}{1}{3} \gcl{1} \gnl
\gvac{1} \gev \gvac{1} \gnl
\gvac{2} \gob{2}{} \gend} =
\scalebox{0.84}[0.84]{
\gbeg{4}{5}
\got{1}{\hspace{-0,16cm}H^*} \got{1}{H} \got{1}{H} \gnl
\gcn{1}{2}{0}{0} \gbr \gnl
\gvac{1} \gmu \gnl
\hspace{-0,2cm} \gwev{3} \gnl
\gvac{2} \gob{2}{} \gend} =
\scalebox{0.84}[0.84]{
\gbeg{5}{5}
\gvac{1} \got{1}{\hspace{-0,16cm}H^*} \got{1}{H} \got{1}{H} \gnl
\gcmu \gbr \gnl
\gcl{1} \gbr \gcl{1} \gnl
\gev \gev \gnl
\gvac{2} \gob{1}{} \gend}
 $$

$$\stackrel{nat.}{=}\scalebox{0.84}[0.84]{
\gbeg{5}{5}
\got{1}{\hspace{0,6cm}H^*} \gvac{1} \got{1}{H} \got{1}{H} \gnl
\gcmu \gcl{2} \gcl{2} \gnl
\gibr \gnl
\gcl{1} \gibr \gcl{1} \gnl
\gev \gev \gnl
\gvac{2} \gob{1}{} \gend}
\stackrel{nat.}{=} \scalebox{0.84}[0.84]{
\gbeg{5}{5}
\got{1}{\hspace{0,6cm}H^*} \gvac{1} \got{1}{H} \got{1}{H} \gnl
\gcmu \gcl{2} \gcl{2} \gnl
\gbr \gnl
\gcl{1} \gbr \gcl{1} \gnl
\gev \gev \gnl
\gvac{2} \gob{1}{}
\gend}\stackrel{\Phi_{H^*, H^*}}{=}
\scalebox{0.84}[0.84]{
\gbeg{5}{5}
\got{1}{\hspace{0,6cm}H^*} \gvac{1} \got{1}{H} \got{1}{H} \gnl
\gcmu \gcl{2} \gcl{2} \gnl
\gibr \gnl
\gcl{1} \gbr \gcl{1} \gnl
\gev \gev \gnl
\gvac{2} \gob{1}{} \gend} =
\scalebox{0.84}[0.84]{
\gbeg{6}{5}
\got{2}{\hspace{0,3cm}(H^*)^{cop}} \got{2}{H} \got{1}{H} \gnl
\gcmu \gcn{1}{1}{2}{1} \gcn{1}{1}{3}{1} \gnl
\gcl{1} \gbr \gcl{1} \gnl
\gev \gev \gnl
\gvac{2} \gob{1}{} \gend}
$$
The claim follows by the universal property of $H^*\ot H^*\iso(H\ot H)^*$.
If $\Phi_{H,H}$ is symmetric, then $(H^{op})^*$ and $(H^*)^{cop}$
are bialgebras 
and they are isomorphic as Hopf algebras.
\end{rem}

\begin{rem}
There are several ways to construct a Drinfel'd double. In \cite[Prop. 3.6]{BD} one can find a
construction of a matched pair of bialgebras, and hence a bicrossproduct $H\bowtie A$. With
$H\w=\ö(A^{op})^*$ and the pairing $\langle ., .\rangle\w=\ö\ev$ it is given a different construction
than the one in our \prref{Drinfelddouble}.
Taking $A\w=\ö(H^{cop})^*$ and $\langle ., .\rangle\w=\ö\crta\ev$, one obtains a Drinfel'd double of the form
$H\ö\bowtie\ö (H^{cop})^*\iso H\ö\bowtie\ö (H^*)^{op}$. The authors proved that if $A$ and $H$ are Hopf algebras
where the antipode of $A$ is invertible, than $A$ and $H$ are a matched pair of bialgebras if and
only if $\Phi_{A, H}$ is symmetric.
In \cite[Theorem 3.2]{Z} a result similar to our \prref{Drinfelddouble} is proved, but the $H$- and $H^{op}$-actions
are given differently. The quasitriangularity of $D(H)$ we will discus in the next section.
\end{rem}

Developing the right hand-side of the expression \equref{B tie H -mod} applied to the
Drinfel'd double and using the actions given in \prref{Drinfelddouble}, yields:
\begin{equation}\eqlabel{D(H)-action}
\gbeg{5}{9}
\got{2}{H} \got{2}{B} \got{1}{M} \gnl
\gcmu \gcmu \gcl{7} \gnl
\gcl{1} \gbr \gcl{1} \gnl
\glm \grm \gnl
\gvac{1} \gcl{1} \gcn{1}{1}{1}{3} \gnl
\gvac{1} \gcn{1}{2}{1}{5} \gvac{1} \glm \gnl
\gvac{4} \gcl{1} \gnl
\gvac{3} \glm \gnl
\gvac{3} \gob{3}{M}
\gend=
\gbeg{8}{14}
\gvac{4} \got{2}{H} \got{1}{B} \got{1}{M} \gnl
\gvac{4} \gcmu \gcl{4} \gcl{11} \gnl
\gvac{1} \gdb \gvac{1} \hspace{-0,42cm} \gcn{1}{1}{3}{2} \gvac{1} \gmp{-} \gnl 
\gvac{2} \gibr \gcmu \gcl{1} \gnl
\gvac{2} \gcl{4} \gmu \gbr \gnl
\gvac{3} \gcn{1}{1}{2}{3} \gvac{1} \gcl{1} \gbr \gnl
\gvac{4} \gbr \gcl{1} \gcl{3} \gnl
\gvac{4} \gmu \gcn{1}{1}{1}{0} \gnl
\gvac{2} \gcn{2}{2}{1}{5} \gvac{1} \hspace{-0,34cm} \glmpt \gnot{\hspace{-0,32cm} \crta \ev} \grmptb \gnl
\gvac{6} \gcn{1}{1}{4}{1} \gnl
\gvac{5} \hspace{-0,26cm} \glmp \gnot{\hspace{-0,4cm} B\bowtie H} \grmp \gnl
\gvac{6} \hspace{-0,12cm} \gcn{2}{1}{2}{5} \gnl
\gvac{8} \hspace{-0,12cm} \glm \gnl
\gvac{9} \gob{1}{M.}
\gend
\end{equation}
Taking $M\ö=\ö B\bowtie H$ and applying the above equality to $H\ö\ot B\ö\ot\eta_{B\bowtie H}$, one gets:
\begin{equation} \eqlabel{D(H)-mult}
\gbeg{5}{6}
\got{2}{H} \got{2}{B} \gnl
\gcmu \gcmu \gnl
\gcl{1} \gbr \gcl{1} \gnl
\glm \grm \gnl
\gvac{1} \gcl{1} \gcl{1} \gnl
\gvac{1} \gob{1}{B} \gob{1}{H}
\gend=
\gbeg{8}{11}
\gvac{3} \got{2}{H} \got{1}{B} \gnl
\gvac{3} \gcmu \gcl{4} \gnl
\gdb \gvac{1} \hspace{-0,42cm} \gcn{1}{1}{3}{2} \gvac{1} \gmp{-} \gnl
\gvac{1} \gibr \gcmu \gcl{1} \gnl
\gvac{1} \gcl{6} \gmu \gbr \gnl
\gvac{2} \gcn{1}{1}{2}{3} \gvac{1} \gcl{1} \gbr \gnl
\gvac{3} \gbr \gcl{1} \gcl{4} \gnl
\gvac{3} \gmu \gcn{1}{1}{1}{0} \gnl
\gvac{4} \hspace{-0,34cm} \glmpt \gnot{\hspace{-0,32cm} \crta \ev} \grmptb \gnl
\gvac{1} \gob{2}{B} \gvac{3} \gob{2}{H}
\gend
\end{equation}

The following result generalizes \cite[Proposition 4.6]{Rad2} to the braided case.

\begin{lma}
Assume that 
$\Phi_{H,H^*}$ is symmetric. Then the following are equivalent:

(i) $D(H)$ is commutative,

(ii) $H$ and $H^*$ are commutative;

(iii) $H$ and $H^*$ are cocommutative;

(iv) $D(H)$ is cocommutative.
\end{lma}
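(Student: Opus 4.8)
The plan is to reduce all four equivalences to three ingredients: the duality ``finite commutative algebra $\leftrightarrow$ cocommutative coalgebra'' recorded at the end of \inref{alg-coalg}, the fact that $H$ and $B:=(H^{op})^*$ sit inside $D(H)=B\bowtie H$ as retracts that are simultaneously sub-Hopf algebras, and an explicit check that under the hypotheses the matched-pair actions of \prref{Drinfelddouble} become trivial. First I would collect the consequences of the assumption: since $\Phi_{H,H^*}$ is symmetric, \leref{Zhang lemma} gives that $\Phi_{H,H}$ and $\Phi_{H^*,H^*}$ are symmetric as well, hence so are $\Phi_{B,B}$ and $\Phi_{B,H}$ (because $B\iso(H^*)^{cop}$ has the same underlying object as $H^*$); thus $B$ is a bialgebra, $D(H)$ is a genuine Hopf algebra by \prref{Drinfelddouble}, and the braided subcategory generated by $H$ and $B$ is symmetric. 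I also recall that $(H^{op})^*\iso(H^*)^{cop}$ as Hopf algebras (the Remark following \leref{H - H* transp.}), while $(H^{op})^*=H^*$ \emph{as algebras}, since the multiplication \eqref{multH*} on the dual depends only on the coalgebra structure of $H$, which ${}^{op}$ leaves unchanged.

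The equivalence (ii)$\Leftrightarrow$(iii) is then immediate: $H$ is finite, so by the last sentence of \inref{alg-coalg}, ``$H$ commutative $\Leftrightarrow$ $H^*$ cocommutative''; applying the same statement to $H^*$ and using $H\iso H^{**}$ gives ``$H^*$ commutative $\Leftrightarrow$ $H$ cocommutative'', and the conjunction in (ii) is therefore equivalent to the conjunction in (iii).

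For (i)$\Rightarrow$(ii) and (iv)$\Rightarrow$(iii) I would verify that $\eta_B\ot\Id_H\colon H\to B\bowtie H$ and $\Id_B\ot\eta_H\colon B\to B\bowtie H$ are morphisms of Hopf algebras, split in $\C$ by $\Epsilon_B\ot\Id_H$ and $\Id_B\ot\Epsilon_H$; here one uses that $\eta_B$ and $\eta_H$ act trivially on the opposite tensorand (matched-pair axioms and module unitality) and that braiding with the unit object is the identity. A short diagram chase then shows that a retract in $\C$ of a commutative algebra is commutative and a retract of a cocommutative coalgebra is cocommutative: if $i$ is an algebra map with $pi=\Id$, then $i\,\nabla_A=\nabla_D(i\ot i)=\nabla_D\Phi_{D,D}(i\ot i)=\nabla_D(i\ot i)\Phi_{A,A}=i\,\nabla_A\Phi_{A,A}$ by naturality of $\Phi$, and composing with $p$ yields $\nabla_A=\nabla_A\Phi_{A,A}$; dually for coalgebras. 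Hence $D(H)$ commutative forces $H$ and $B=H^*$ commutative, i.e.\ (ii), and $D(H)$ cocommutative forces $H$ and $B\iso(H^*)^{cop}$ cocommutative, i.e.\ $H$ and $H^*$ cocommutative, which is (iii).

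It remains to prove that (ii) --- hence also (iii), so that $H$ and $H^*$ are both commutative and cocommutative --- implies (i) and (iv), and this is where the real work lies. I would return to the actions in \prref{Drinfelddouble} and to the multiplication formula \equref{D(H)-mult}, and show that when $H$ is commutative and cocommutative the left $H$-action $\triangleright$ on $B$ and the right $B$-action $\triangleleft$ on $H$ degenerate to $\Epsilon_H\ot\Id_B$ and $\Id_H\ot\Epsilon_B$ respectively. The ingredients of that braided computation are: cocommutativity of $H$ to symmetrize the threefold comultiplications occurring in the two actions, the identity $S=S^{-1}$ valid in a commutative cocommutative Hopf algebra together with the antipode axioms (in the spirit of \eqref{antipode2}) to contract two of the three $H$-legs against the counit, and symmetricity of $\Phi_{H,H}$ and $\Phi_{H,H^*}$ to slide the braidings past. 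Once $\triangleright$ and $\triangleleft$ are trivial, the bicrossproduct multiplication reduces to $(b\ot h)(b'\ot h')=bb'\ot hh'$, so $D(H)=B\ot H$ is simply the tensor product Hopf algebra of $B$ and $H$ inside the symmetric subcategory they generate; being a tensor product of two commutative (resp.\ cocommutative) Hopf algebras in a symmetric setting, it is commutative (resp.\ cocommutative), which gives (i) (resp.\ (iv)). The expected obstacle is precisely this last diagrammatic step: extracting the exact form of the two coadjoint-type actions of \prref{Drinfelddouble} and checking their collapse; the rest of the argument is formal. One could instead try to deduce it from a self-duality $D(H)^*\iso D(H^*)$ up to ${}^{op}/{}^{cop}$ combined with (i)$\Leftrightarrow$(ii) applied to $H^*$, but pinning that isomorphism down in the braided setting looks no easier.
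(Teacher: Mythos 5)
Your argument is correct, and it follows a genuinely different route from the paper's in both nontrivial directions. The paper reduces everything to (i)$\Leftrightarrow$(ii) via the duality in \inref{alg-coalg}, proves (i)$\Rightarrow$(ii) by pairing the commutativity identity for $\nabla_{D(H)}$ against $H^{op}$ via $\crta\ev$ (using the auxiliary identities \equref{mult in B} and \equref{move cuerda} to obtain \equref{comm proof}) and then specializing that identity at the units of the two tensor factors; the converse is obtained by verifying \equref{comm proof} directly. Your replacement of the first step by the observation that $\eta_B\ot\Id_H$ and $\Id_B\ot\eta_H$ are split sub-Hopf-algebra inclusions, together with the formal ``retract of a commutative algebra is commutative'' chase, is cleaner: it needs only the matched-pair unit axioms and naturality of $\Phi$, avoids the evaluation machinery entirely, and dualizes verbatim to give (iv)$\Rightarrow$(iii), so you never have to identify $D(H)^*$. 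For the converse your plan --- show that under (ii)+(iii) the two coadjoint-type actions of \prref{Drinfelddouble} collapse to $\Epsilon\ot\Id$, so that $D(H)$ is just the tensor product Hopf algebra of $B$ and $H$ in the symmetric subcategory they generate --- is more informative than the paper's bare verification of \equref{comm proof}, since it explains \emph{why} $D(H)$ is commutative and cocommutative. Both you and the paper leave the decisive braided-diagram computation (your trivialization of $\triangleright$ and $\triangleleft$; the paper's verification of \equref{comm proof}) as a sketch, so on that score the two write-ups are at the same level of completeness; the ingredients you list (cocommutativity to reorder the iterated comultiplications, $S=S^{-1}$ with the antipode axioms in the spirit of \equref{antipode2}, and transparency to slide braidings) are exactly the ones needed, so I see no gap.
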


\begin{proof}
In view of \inref{alg-coalg} it suffices to prove the equivalence of (i) and (ii). We omit to type the
whole proof, we only give a sketch of it. First observe that we have identities: \vspace{-0,5cm}
\begin{equation}\eqlabel{mult in B}
\scalebox{0.84}[0.84]{ \gbeg{4}{4}
\got{2}{H^{op}} \got{1}{B} \got{1}{B} \gnl
\gcmu \gcl{1} \gcl{2} \gnl \gcl{1} \gbr \gnl
\glmpt \gnot{\hspace{-0,32cm} \crta \ev} \grmpt \glmpt \gnot{\hspace{-0,32cm} \crta \ev} \grmpt \gnl
\gvac{2} \gob{1}{} \gend}=
\scalebox{0.84}[0.84]{
\gbeg{4}{5}
\got{2}{H^{op}} \got{1}{B} \got{1}{B} \gnl
\gcmu \gcl{1} \gcl{2} \gnl
\gcl{1} \gbr \gnl
\gbr \gbr \gnl
\gev \gev \gnl
\gvac{2} \gob{1}{} \gend} \hspace{-0,14cm}
\stackrel{\Phi_{H,H^*}}{{=}}\hspace{-0,14cm}
\scalebox{0.84}[0.84]{
\gbeg{4}{6}
\got{2}{H^{op}} \got{1}{B} \got{1}{B} \gnl
\gcmu \gcl{1} \gcl{2} \gnl
\gcl{1} \gbr \gnl
\gbr \gbr \gnl
\gcl{1} \gbr \gcl{1} \gnl
\gcl{1} \gbr \gcl{1} \gnl
\gev \gev \gnl
\gvac{2} \gob{1}{} \gend} \stackrel{(\ref{multH*})}{{=}} 
\scalebox{0.84}[0.84]{
\gbeg{4}{5}
\got{1}{H^{op}} \got{1}{B} \got{3}{B} \gnl
\gcl{1} \gwmu{3} \gnl
\gcn{2}{1}{1}{3} \gcl{1} \gnl
\gvac{1} \gbr \gnl
\gvac{1} \gev \gnl
\gvac{2} \gob{2}{} \gend}=
\scalebox{0.84}[0.84]{ \gbeg{4}{4}
\got{1}{H^{op}} \got{1}{B} \got{1}{B} \gnl
\gcn{1}{1}{0}{0} \gmu \gnl
\gcn{1}{1}{0}{2} \gcn{1}{1}{2}{0} \gnl
\gvac{1} \hspace{-0,34cm} \gmp{\crta\ev} \gnl
\gvac{2} \gob{2}{} \gend}
\end{equation}
and
\begin{equation}\eqlabel{move cuerda}
\gbeg{3}{6}
\got{1}{H^{op}} \gnl
\gcl{1} \gdb \gnl
\gcl{1} \gibr \gnl
\gbr \gcl{2} \gnl
\gev \gnl
\gob{5}{H^{op}}
\gend\stackrel{nat.}{=}
\gbeg{3}{4}
\got{1}{} \got{3}{H^{op}} \gnl
\gdb \gcl{1} \gnl
\gcl{1} \gev \gnl
\gob{1}{H^{op}}
\gend=id_{H^{op}}
\end{equation}
Suppose that $D(H)$ is commutative. Using $\crta\ev=\ev\Phi$ and evaluating the product
in $D(H)$ at $H^{op}$, we obtain:
\begin{equation} \eqlabel{comm proof}
\gbeg{8}{11}
\got{1}{H^{op}} \got{1}{B} \gvac{2} \got{2}{H} \got{1}{B} \got{1}{H} \gnl
\gbr \gvac{2} \gcmu \gcl{4} \gcl{5} \gnl
\gcl{2} \gcn{1}{1}{1}{2} \gvac{1}  \gcn{1}{1}{3}{2} \gvac{1} \gmp{-} \gnl
\gvac{1} \gcmu \gcmu \gcl{1} \gnl
\gev \gmu \gbr \gnl
\gvac{2} \gcn{1}{1}{2}{3} \gvac{1} \gcl{1} \gbr \gnl
\gvac{3} \gbr \gcl{1} \gmu \gnl
\gvac{3} \gmu \gcn{1}{1}{1}{0} \gcn{1}{3}{2}{2} \gnl
\gvac{4} \hspace{-0,34cm} \gbr \gnl
\gvac{4} \gev \gnl
\gvac{7} \gob{1}{H}
\gend \stackrel{\nabla_{D(H)}}{\stackrel{\equref{move cuerda}}{=}}
\gbeg{4}{6}
\got{1}{\hspace{-0,2cm}H^{op}} \got{1}{\hspace{0,2cm}B\bowtie H} \got{3}{B\bowtie H} \gnl
\gcl{2} \gwmu{3} \gnl
\gvac{1} \glmpb \gnot{\hspace{-0,4cm} B\bowtie H} \grmptb \gnl
\gbr \gcl{2} \gnl
\gev \gnl
\gvac{2} \gob{1}{B}
\gend
=
\gbeg{4}{9}
\got{1}{\hspace{-0,2cm}H^{op}} \got{1}{\hspace{0,2cm}B\bowtie H} \got{3}{B\bowtie H} \gnl
\gcl{5} \gcn{1}{1}{1}{2} \gcn{1}{1}{3}{2} \gnl
\gvac{2} \hspace{-0,34cm} \gbr \gnl
\gvac{1} \gcn{1}{1}{3}{2} \gcn{1}{1}{3}{4} \gnl
\gvac{2} \hspace{-0,34cm} \gwmu{3} \gnl
\gvac{2} \glmpb \gnot{\hspace{-0,4cm} B\bowtie H} \grmptb \gnl
\gvac{1} \gbr \gcl{2} \gnl
\gvac{1} \gev \gnl
\gvac{3}
\gob{1}{B}
\gend \stackrel{\nabla_{D(H)}}{\stackrel{\equref{move cuerda}}{=}}
\gbeg{8}{14}
\gvac{1} \got{1}{H^{op}} \gvac{2} \got{1}{B} \got{1}{H} \got{1}{B} \got{1}{H} \gnl
\gvac{1} \gcl{1} \gvac{2} \gcl{1} \gbr \gcl{1} \gnl
\gvac{1} \gcl{1} \gvac{2} \gbr \gbr \gnl
\gvac{1} \gcl{1} \gcn{2}{1}{5}{1} \gvac{1} \gbr \gcl{1} \gnl
\gvac{1} \gbr \gvac{2} \hspace{-0,22cm} \gcmu \gcn{1}{1}{0}{1} \gcn{1}{1}{0}{1} \gnl
\gvac{1} \gcn{1}{1}{2}{1} \gcn{1}{1}{2}{2} \gvac{1}  \gcn{1}{1}{3}{2} \gvac{1} \gmp{-} \gcl{3} \gcl{4} \gnl
\gvac{1} \gcl{1} \gcmu \gcmu \gcl{1} \gnl
\gvac{1} \gev \gmu \gbr \gnl
\gvac{3} \gcn{1}{1}{2}{3} \gvac{1} \gcl{1} \gbr \gnl
\gvac{4} \gbr \gcl{1} \gmu \gnl
\gvac{4} \gmu \gcn{1}{1}{1}{0} \gcn{1}{3}{2}{2} \gnl
\gvac{5} \hspace{-0,34cm} \gbr \gnl
\gvac{5} \gev \gnl
\gvac{8} \gob{1}{H}
\gend
\end{equation}
Apply this to $H^{op}\ot B\ot\eta_H\ot B\ot\eta_H$ and compose the obtained identity with $\Epsilon_H$
to obtain:
$$
\gbeg{4}{6}
\got{1}{H^{op}} \got{1}{B} \got{3}{B} \gnl
\gbr \gvac{1} \gcl{3} \gnl
\gcl{2} \gcn{1}{1}{1}{2} \gnl
\gvac{1} \gcmu \gnl
\gev \gbr \gnl
\gvac{2} \gev
\gob{1}{}
\gend=
\gbeg{4}{7}
\got{1}{H^{op}} \got{1}{B} \got{1}{B} \gnl
\gcl{1} \gbr \gnl
\gbr \gcn{1}{1}{1}{3} \gnl
\gcl{2} \gcn{1}{1}{1}{2} \gvac{1} \gcl{2} \gnl
\gvac{1} \gcmu \gnl
\gev \gbr \gnl
\gvac{2} \gev
\gob{1}{}
\gend
$$
By \equref{mult in B} 
one gets that $B$, and hence $H^*$, is commutative. Applying \equref{comm proof} to
$\eta_H\ot\eta_B\ot H\ot\eta_B\ot H$, one obtains that $H$ is commutative.

Conversely, assuming (ii), using \equref{mult in B} and that $\Phi_{H,H^*}$ is symmetric, one may
prove that \equref{comm proof} - which expresses commutativity of $D(H)$ - holds true.
\qed\end{proof}

\section{Yetter-Drinfel'd modules as modules over the Drinfel'd double} \selabel{YD-DH}
\setcounter{equation}{0}

Since $D(H)$ is a bialgebra in $\C$, the category of its left (and right) modules
is monoidal. In this and the next section we study the isomorphism between these categories and the
appropriate categories of YD-modules. The functors we will consider
will act as identity functors on objects and morphisms,
we will only have to define the new (co)module structures. Let us regard the pair of functors
$$\hspace{-0,24cm}
\bfig
\putmorphism(200,30)(1,0)[\F: {}_{(H^{op})^*\bowtie H}\C` {}_H\YD(\C)^{H^{op}}: \G.`]{900}0a
\putmorphism(170,50)(1,0)[\phantom{{}_H\YD(\C)^{H^{op}}: \G}`\phantom{\F: {}_{(H^{op})^*\bowtie H}\C}` ]{900}1a
\putmorphism(160,10)(1,0)[\phantom{{}_H\YD(\C)^{H^{op}}: \G}`\phantom{\F: {}_{(H^{op})^*\bowtie H}\C}` ]{880}{-1}b
\efig
$$
For $M\in {}_{(H^{op})^*\bowtie H}\C$ and $K\in {}_H\YD(\C)^{H^{op}}$ we define:
$$
\scalebox{0.9}[0.9]{
\gbeg{3}{4}
\got{1}{\F(M)} \gnl
\grcm \gnl
\gcl{1} \gcn{1}{1}{1}{3} \gnl
\gob{1}{\F(M)} \gvac{1} \gob{1}{H}
\gend} = \scalebox{0.9}[0.9]{
\gbeg{7}{5}
\got{1}{} \got{1}{} \got{1}{M} \gnl
\gdb \gcl{1} \gnl
\gcn{1}{1}{1}{3} \glm \gnl
\gvac{1} \gbr \gnl
\gvac{1} \gob{1}{M} \gob{1}{H}
\gend}\textnormal{and}\qquad
\scalebox{0.9}[0.9]{
\gbeg{3}{4}
\got{1}{H^*} \gvac{1} \got{1}{\G(K)} \gnl
\gcn{1}{1}{1}{3} \gvac{1} \gcl{1} \gnl
\gvac{1} \glm \gnl
\gvac{2} \gob{1}{\G(K)}
\gend} = \scalebox{0.9}[0.9]{
\gbeg{2}{5}
\got{1}{H^*} \got{1}{K} \gnl
\gcl{1} \grcm \gnl
\gbr \gcl{1} \gnl
\gcl{1} \gev \gnl
\gob{1}{K.}
\gend}
$$
Regard $\F(M)$ as a left $H$-module by the action of $\eta_B\ot H$ on $M$, and
consider $\G(K)=K$ as a left $H$-module. By \prref{Hmod-H*comod} we know that $\F(M)$ is a right
$H$-comodule and $\G(K)$ a left $H^*$-module. 

Assume that $\Phi_{H, H}$ 
and $\Phi_{H, M}$ are symmetric, where
$M\in {}_{(H^{op})^*\bowtie H}\C$. Let $B=(H^{op})^*$. We have: \vspace{-0,12cm}
$$\scalebox{0.84}[0.84]{
\gbeg{6}{8}
\got{1}{B} \got{2}{H} \got{1}{\F(M)} \gnl
\gcl{3} \gcn{1}{1}{2}{2} \gcn{1}{1}{3}{3} \gnl
\gvac{1} \gcmu \grcm \gnl
\gvac{1} \gcl{1} \gbr \gcl{1} \gnl
\gcn{1}{1}{1}{3} \glm \gmu \gnl
\gvac{1} \gbr \gcn{1}{1}{2}{1} \gnl
\gvac{1} \gcl{1} \gev \gnl
\gob{3}{\F(M)}
\gend}\stackrel{\F}{=}
\scalebox{0.84}[0.84]{
\gbeg{7}{9}
\got{1}{B} \got{2}{H} \gvac{2} \got{1}{M} \gnl
\gcl{4} \gcn{1}{1}{2}{2} \gvac{1} \gdb \gcl{1} \gnl
\gvac{1} \gcmu \gcn{1}{1}{1}{3} \glm \gnl
\gvac{1} \gcl{1} \gcn{1}{1}{1}{3} \gvac{1} \gbr \gnl
\gvac{1} \gcn{1}{1}{1}{3} \gvac{1} \gbr \gcl{1} \gnl
\gcn{1}{1}{1}{5} \gvac{1} \glm \gmu \gnl
\gvac{2} \gbr \gcn{1}{1}{2}{1} \gnl
\gvac{2} \gcl{1} \gev \gnl
\gob{5}{M}
\gend} \stackrel{\equref{B-codiag}}{=}
\scalebox{0.84}[0.84]{
\gbeg{8}{8}
\got{2}{B} \got{2}{H} \gvac{2} \got{1}{M} \gnl
\gcn{1}{2}{2}{2} \gvac{1} \gcmu \gdb \gcl{1} \gnl
\gvac{2} \gcl{1} \gibr \glm \gnl
\gcmu \gibr \gcl{1} \gvac{1} \gcl{3} \gnl
\gcl{1} \gbr \gibr \gnl
\gev \gev \gcn{1}{1}{1}{3} \gnl
\gvac{5} \glm \gnl
\gvac{4} \gob{5}{M}
\gend} \stackrel{\Phi_{H,H}}{\stackrel{nat.}{\stackrel{\equref{evdb=id}}{=}}}
\scalebox{0.84}[0.84]{
\gbeg{7}{9}
\got{2}{B} \got{2}{H} \got{1}{M} \gnl
\gcmu \gcmu \gcl{5} \gnl
\gcl{1} \gbr \gcl{1} \gnl
\gbr \gev \gnl
\gcl{1} \gcn{2}{1}{1}{5} \gnl
\gcn{3}{2}{1}{7} \glm \gnl
\gvac{4} \gcl{1} \gnl
\gvac{3} \glm \gnl
\gvac{4} \gob{1}{M}
\gend}
$$

$$\stackrel{\equref{D(H)-action}}{=}
\scalebox{0.84}[0.84]{
\gbeg{10}{17}
\gvac{3} \got{2}{B} \got{2}{H} \got{1}{M} \gnl
\gvac{3} \gcmu \gcmu \gcl{11} \gnl
\gvac{3} \gcl{1} \gbr \gcl{1} \gnl
\gvac{3} \gbr \gev \gnl
\gvac{3} \hspace{-0,22cm} \gcmu \gcn{1}{1}{0}{1} \gnl
\gdb \gvac{1} \hspace{-0,42cm} \gcn{1}{1}{3}{2} \gvac{1} \gmp{-} \gcl{3} \gnl
\gvac{1} \gibr \gcmu \gcl{1} \gnl
\gvac{1} \gcl{4} \gmu \gbr \gnl
\gvac{2} \gcn{1}{1}{2}{3} \gvac{1} \gcl{1} \gbr \gnl
\gvac{3} \gbr \gcl{1} \gcl{3} \gnl
\gvac{3} \gmu \gcn{1}{1}{1}{0} \gnl
\gvac{1} \gcn{2}{2}{1}{5} \gvac{1} \hspace{-0,34cm} \glmpt \gnot{\hspace{-0,32cm} \crta \ev} \grmptb \gnl
\gvac{5} \gcn{1}{1}{4}{1} \gvac{1} \gcn{1}{3}{5}{1} \gnl
\gvac{4} \hspace{-0,24cm} \glmp \gnot{\hspace{-0,4cm} B\bowtie H} \grmp \gnl
\gvac{5} \hspace{-0,12cm} \gcn{2}{1}{1}{4} \gnl
\gvac{7} \hspace{-0,34cm}\glm \gnl
\gvac{8} \gob{1}{M}
\gend} \stackrel{nat.}{=}
\scalebox{0.84}[0.84]{
\gbeg{9}{18}
\gvac{4} \got{1}{B} \got{1}{H} \got{3}{M} \gnl
\gvac{4} \gbr \gvac{1} \gcl{12} \gnl
\gvac{3} \gcn{1}{1}{3}{2} \gvac{1} \gcn{1}{1}{1}{2} \gnl
\gvac{3} \gcmu \gcmu \gnl
\gvac{3} \gcl{1} \gibr \gcl{1} \gnl
\gdb \gvac{1} \hspace{-0,2cm} \gcmu \gcn{1}{1}{0}{1} \hspace{-0,22cm} \gibr \gnl
\gvac{1} \gibr \gcmu \gcn{1}{1}{0}{1} \gcn{1}{1}{0}{0} \hspace{-0,4cm} \gev \gnl
\gvac{2} \gcl{5} \gmu \gcl{1} \gmp{-} \gcn{1}{1}{0}{1} \gnl
\gvac{3} \gcn{1}{1}{2}{2} \gvac{1} \gbr \gcl{1} \gnl
\gvac{3} \gcn{1}{1}{2}{3} \gvac{1} \gcl{1} \gbr \gnl
\gvac{4} \gbr \gcl{1} \gcl{3} \gnl
\gvac{4} \gmu \gcn{1}{1}{1}{0} \gnl
\gvac{2} \gcn{2}{2}{1}{5} \gvac{1} \hspace{-0,36cm} \glmpt \gnot{\hspace{-0,32cm} \crta \ev} \grmptb \gnl
\gvac{6} \gcn{1}{1}{4}{1} \gvac{1} \gcn{1}{3}{4}{1} \gnl
\gvac{5} \hspace{-0,24cm} \glmp \gnot{\hspace{-0,4cm} B\bowtie H} \grmp \gnl
\gvac{6} \hspace{-0,12cm} \gcn{2}{1}{1}{4} \gnl
\gvac{8} \hspace{-0,34cm} \glm \gnl
\gvac{9} \gob{1}{M}
\gend} \stackrel{\Phi_{H,B}}{\stackrel{\equref{crta ev}}{=}}
\scalebox{0.84}[0.84]{
\gbeg{9}{18}
\gvac{4} \got{1}{B} \got{1}{H} \got{3}{M} \gnl
\gvac{4} \gbr \gvac{1} \gcl{12} \gnl
\gvac{3} \gcn{1}{1}{3}{2} \gvac{1} \gcn{1}{1}{1}{2} \gnl
\gvac{3} \gcmu \gcmu \gnl
\gvac{3} \gcl{1} \gbr \gcl{1} \gnl
\gdb \gvac{1} \hspace{-0,22cm} \gcmu \gcn{1}{1}{0}{0} \hspace{-0,2cm}
   \glmpt \gnot{\hspace{-0,32cm} \crta \ev} \grmptb \gnl
\gvac{1} \gibr \gcmu \gcn{1}{1}{0}{1} \gcn{1}{1}{-1}{0} \gnl
\gvac{1} \gcl{5} \gmu \gcl{1} \gmp{-} \gcn{1}{1}{0}{1} \gnl
\gvac{2} \gcn{1}{1}{2}{2} \gvac{1} \gbr \gcl{1} \gnl
\gvac{2} \gcn{1}{1}{2}{3} \gvac{1} \gcl{1} \gbr \gnl
\gvac{3} \gbr \gcl{1} \gcl{3} \gnl
\gvac{3} \gmu \gcn{1}{1}{1}{0} \gnl
\gvac{1} \gcn{2}{2}{1}{5} \gvac{1} \hspace{-0,36cm} \glmpt \gnot{\hspace{-0,32cm} \crta \ev} \grmptb \gnl
\gvac{5} \gcn{1}{1}{4}{1} \gvac{1} \gcn{1}{3}{4}{1} \gnl
\gvac{4} \hspace{-0,24cm} \glmp \gnot{\hspace{-0,4cm} B\bowtie H} \grmp \gnl
\gvac{5} \hspace{-0,12cm} \gcn{2}{1}{1}{4} \gnl
\gvac{7} \hspace{-0,34cm}\glm \gnl
\gvac{8} \gob{1}{M}
\gend}
$$

$$\stackrel{nat.}{\stackrel{\equref{codiag B}}{=}}
\scalebox{0.84}[0.84]{
\gbeg{9}{18}
\gvac{4} \got{1}{B} \got{1}{H} \got{3}{M} \gnl
\gvac{4} \gbr \gvac{1} \gcl{14} \gnl
\gvac{3} \gcn{1}{1}{3}{2} \gvac{1} \gcn{1}{1}{1}{2} \gnl
\gvac{3} \gcmu \gcn{1}{1}{2}{2} \gnl
\gdb \gvac{1} \hspace{-0,2cm} \gcmu \gcn{1}{1}{0}{2} \gcn{1}{1}{1}{2} \gnl
\gvac{1} \hspace{-0,36cm} \gibr \gcmu \gcn{1}{1}{0}{1} \gcl{6} \gcl{8} \gnl
\gvac{1} \gcl{5} \gmu \gcl{1} \gmp{-} \gnl
\gvac{2} \gcn{1}{1}{2}{2} \gvac{1} \gbr \gcl{1} \gnl
\gvac{2} \gcn{1}{1}{2}{3} \gvac{1} \gcl{1} \gcl{2} \gnl
\gvac{3} \gbr \gnl
\gvac{3} \gmu \gcn{1}{1}{1}{0} \gnl
\gvac{1} \gcn{2}{1}{1}{4} \gvac{1} \hspace{-0,36cm} \gibr \gcn{1}{1}{2}{1} \gnl
\gvac{3} \glmp \gnot{\hspace{-0,4cm} B\bowtie H} \grmpb \gbr \gnl
\gvac{4} \gcl{2} \gmu \gcn{1}{1}{2}{0} \gnl
\gvac{6} \hspace{-0,34cm} \glmpt \gnot{\hspace{-0,32cm} \crta \ev} \grmptb \gnl
\gvac{5} \gcn{2}{1}{0}{4} \gcn{2}{1}{5}{2} \gnl
\gvac{7} \hspace{-0,34cm} \glm \gnl
\gvac{8} \gob{1}{M}
\gend} \stackrel{coass.}{\stackrel{ass.}{\stackrel{nat.}{=}}}
\scalebox{0.84}[0.84]{
\gbeg{9}{14}
\gvac{4} \got{1}{B} \got{1}{H} \got{3}{M} \gnl
\gvac{4} \gbr \gvac{1} \gcl{10} \gnl
\gvac{4} \gcl{1} \gcn{1}{1}{1}{3} \gnl
\gvac{1} \hspace{-0,34cm} \gdb \gvac{1} \hspace{-0,22cm} \gwcm{3} \gcn{1}{6}{1}{1} \gnl
\gvac{2} \hspace{-0,34cm} \gibr \gvac{1} \hspace{-0,42cm} \gcmu \gcmu \gnl
\gvac{3} \gcl{1} \gmu \gcn{1}{1}{1}{0} \gmp{-} \gcl{1} \gnl
\gvac{1} \gcn{1}{1}{5}{6} \gvac{3} \hspace{-0,36cm} \gibr \gvac{1} \hspace{-0,2cm} \gbr \gnl
\gvac{5} \hspace{-0,22cm} \glmpt \gnot{\hspace{-0,4cm} B\bowtie H} \grmptb \gcn{1}{1}{1}{3}
   \gvac{1} \hspace{-0,34cm} \gmu \gnl
\gvac{7} \gcn{1}{1}{0}{2} \gvac{1} \hspace{-0,34cm} \gbr \gnl
\gvac{8} \gcl{2} \gmu \gcn{1}{1}{1}{0} \gnl
\gvac{10} \hspace{-0,34cm} \glmpt \gnot{\hspace{-0,32cm} \crta \ev} \grmptb \gnl
\gvac{9} \gcn{2}{1}{0}{4} \gcn{2}{1}{4}{2} \gnl
\gvac{11} \hspace{-0,34cm} \glm \gnl
\gvac{12} \gob{1}{M}
\gend} \stackrel{\Phi_{H,H}}{\stackrel{(\ref{antipode2})}{\stackrel{\equref{BHmod-conv}}{=}}}
\scalebox{0.84}[0.84]{
\gbeg{6}{11}
\gvac{2} \got{1}{B} \got{1}{H} \got{1}{M} \gnl
\hspace{-0,4cm} \gdb \gvac{1} \hspace{-0,22cm} \gbr \gcn{1}{6}{1}{1} \gnl
\gvac{1} \hspace{-0,34cm} \gibr \gvac{1} \hspace{-0,42cm} \gcmu \gcn{1}{3}{0}{0}\gnl
\gvac{2} \gcl{4} \gmu \gcn{1}{1}{1}{0} \gnl
\gvac{4} \hspace{-0,36cm} \gibr \gnl
\gvac{4} \gcl{1} \glmpt \gnot{\hspace{-0,32cm} \crta \ev} \grmptb \gnl
\gvac{5} \gcn{2}{1}{-1}{3} \gnl
\gvac{2} \gcn{1}{2}{2}{9} \gvac{3} \glm \gnl
\gvac{7} \gcl{1} \gnl
\gvac{6} \glm \gnl
\gvac{7} \gob{1}{M}
\gend}
=:\Sigma.
$$
On the other hand, it is:
$$
\scalebox{0.84}[0.84]{
\gbeg{5}{10}
\got{1}{B} \got{3}{H} \got{1}{\F(M)} \gnl
\gcl{6} \gwcm{3} \gcl{1} \gnl
\gvac{1} \gcn{1}{2}{1}{3} \gvac{1} \glm \gnl
\gvac{2} \gcn{1}{1}{5}{3} \gnl
\gvac{2} \gbr \gnl
\gvac{1} \gcn{1}{1}{3}{1} \gcn{1}{1}{3}{5} \gnl
\gvac{1} \grcm \gcn{1}{1}{3}{3} \gnl
\gbr \gwmu{3} \gnl
\gcl{1} \gwev{3} \gnl
\gob{1}{\F(M)}
\gend}\stackrel{\F}{=}
\scalebox{0.84}[0.84]{
\gbeg{6}{10}
\got{1}{B} \gvac{1} \got{2}{H} \got{1}{M} \gnl
\gcl{4} \gvac{1} \gcmu \gcl{1} \gnl
\gvac{2} \gcn{1}{1}{1}{3} \glm \gnl
\gvac{1} \gdb \gbr \gnl
\gvac{1} \gcn{1}{1}{1}{3} \glm \gcl{2} \gnl
\gcn{1}{1}{1}{3} \gvac{1} \gbr \gnl
\gvac{1} \gbr \gmu \gnl
\gvac{1} \gcl{2} \gcl{1}\gcn{1}{1}{2}{1} \gnl
\gvac{2} \gev \gnl
\gvac{1} \gob{1}{M}
\gend} \stackrel{nat.}{\stackrel{\equref{crta d}}{\stackrel{\Phi_{H,B}}{=}}}\hspace{-0,26cm}
\scalebox{0.84}[0.84]{
\gbeg{6}{11}
\gvac{2} \got{1}{B} \got{1}{H} \got{1}{M} \gnl
\gvac{2} \gbr \gcl{1} \gnl
\gvac{2} \gcn{1}{1}{1}{0} \gbr \gnl
\gvac{1} \gcmu \gcl{1} \gcl{5} \gnl
\gvac{1} \gcn{1}{1}{1}{3} \glm \gnl
\gdb \gbr \gnl
\gcn{1}{1}{1}{3} \glm \gcl{2} \gnl
\gvac{1} \gbr \gnl
\gvac{1} \gcl{2} \gmu \gcn{1}{1}{1}{0} \gnl
\gvac{3} \hspace{-0,34cm} \glmpt \gnot{\hspace{-0,32cm} \crta \ev} \grmptb \gnl
\gvac{1} \gob{2}{M}
\gend} \stackrel{nat.}{=} \hspace{-0,26cm}
\scalebox{0.84}[0.84]{
\gbeg{7}{10}
\gvac{3} \got{1}{B} \got{1}{H} \got{1}{M} \gnl
\gvac{3} \gbr \gcl{1} \gnl
\gvac{3} \gcn{1}{1}{1}{0} \gbr \gnl
\gdb \gcmu \gcl{1} \gcl{4} \gnl
\gbr \gcl{1} \glm \gnl
\gcl{1} \gmu \gcn{1}{1}{3}{0} \gnl
\gcn{1}{1}{1}{2} \gvac{1} \hspace{-0,34cm} \gbr \gnl
\gvac{1} \glm \gcl{1} \gcn{1}{1}{4}{1} \gnl
\gvac{2} \gcl{1} \glmpt \gnot{\hspace{-0,32cm} \crta \ev} \grmptb \gnl
\gvac{2} \gob{1}{M}
\gend} \hspace{-0,2cm}\stackrel{nat.}{=}\hspace{-0,2cm}
\scalebox{0.84}[0.84]{
\gbeg{6}{10}
\gvac{3} \got{1}{B} \got{1}{H} \got{1}{M} \gnl
\gvac{3} \gbr \gcl{1} \gnl
\gvac{3} \gcn{1}{1}{1}{0} \gcl{1} \gcl{1} \gnl
\gdb \gcmu \gcl{1} \gcl{4} \gnl
\gbr \gcl{1} \gibr \gnl
\gcl{2} \gmu \gcn{1}{1}{1}{0} \glm \gnl
\gvac{2} \hspace{-0,2cm} \glmpt \gnot{\hspace{-0,32cm} \crta \ev} \grmptb \gnl
\gcn{1}{1}{2}{6} \gcn{1}{1}{10}{6} \gnl
\gvac{3} \hspace{-0,36cm} \glm \gnl
\gvac{4} \gob{1}{M}
\gend} \hspace{0,2cm}\stackrel{\Phi_{H,B}}{\stackrel{\Phi_{H,H}}{\stackrel{nat.}{=}}}\Sigma
$$
From the universal property of $H^*=[H, I]$ the obtained identity implies that $\F(M)$ obeys
(\ref{YD-mix*}), thus $\F$ is well defined.
For the converse assume that moreover $\Phi_{H, K}$ is symmetric for $K\in {}_H\YD(\C)^{H^{op}}$.
We will need:
\begin{equation} \eqlabel{petljaH}
\scalebox{0.9}[0.9]{
\gbeg{3}{6}
\got{1}{} \got{3}{H} \gnl
\gdb \gcl{2} \gnl
\gibr \gnl
\gcl{1} \gibr \gnl
\gev \gcl{1} \gnl
\gob{5}{H}
\gend}=\scalebox{0.9}[0.9]{
\gbeg{3}{4}
\got{1}{} \got{3}{H} \gnl
\gdb \gcl{1} \gnl
\gcl{1} \gev \gnl
\gob{1}{H}
\gend}\stackrel{\equref{dbev=id}}{=} id_H
\end{equation}
Now we compute:
$$\scalebox{0.9}[0.9]{
\gbeg{9}{14}
\gvac{4} \got{2}{H} \got{1}{B} \got{1}{\hspace{0,24cm}\G(K)} \gnl
\gvac{4} \gcmu \gcl{4} \gcl{11} \gnl
\gvac{1} \gdb \gvac{1} \hspace{-0,42cm} \gcn{1}{1}{3}{2} \gvac{1} \gmp{-} \gnl 
\gvac{2} \gibr \gcmu \gcl{1} \gnl
\gvac{2} \gcl{4} \gmu \gbr \gnl
\gvac{3} \gcn{1}{1}{2}{3} \gvac{1} \gcl{1} \gbr \gnl
\gvac{4} \gbr \gcl{1} \gcl{3} \gnl
\gvac{4} \gmu \gcn{1}{1}{1}{0} \gnl
\gvac{2} \gcn{2}{2}{1}{5} \gvac{1} \hspace{-0,34cm} \glmpt \gnot{\hspace{-0,32cm} \crta \ev} \grmptb \gnl
\gvac{6} \gcn{1}{1}{4}{1} \gnl
\gvac{5} \hspace{-0,26cm} \glmp \gnot{\hspace{-0,4cm} B\bowtie H} \grmp \gnl
\gvac{6} \hspace{-0,12cm} \gcn{2}{1}{2}{5} \gnl
\gvac{8} \hspace{-0,12cm} \glm \gnl
\gvac{9} \gob{1}{\G(K)}
\gend} \stackrel{\equref{BHmod-conv}}{\stackrel{\G}{=}}
\scalebox{0.9}[0.9]{
\gbeg{8}{14}
\gvac{3} \got{2}{H} \got{1}{B} \got{1}{K} \gnl
\gvac{3} \gcmu \gcl{4} \gcl{5} \gnl
\gdb \gvac{1} \hspace{-0,42cm} \gcn{1}{1}{3}{2} \gvac{1} \gmp{-} \gnl
\gvac{1} \gibr \gcmu \gcl{1} \gnl
\gvac{1} \gcl{4} \gmu \gbr \gnl
\gvac{2} \gcn{1}{1}{2}{3} \gvac{1} \gcl{1} \gbr \gnl
\gvac{3} \gbr \gcl{1} \glm \gnl
\gvac{3} \gmu \gcn{1}{1}{1}{0} \gvac{1} \gcl{1} \gnl
\gvac{1} \gcn{2}{2}{1}{6} \gvac{1} \hspace{-0,34cm} \glmpt \gnot{\hspace{-0,32cm} \crta \ev} \grmptb
   \gcn{2}{2}{4}{-1} \gnl \gnl
\gvac{4} \gcl{1} \grcm \gnl
\gvac{4} \gbr \gcl{1} \gnl
\gvac{4} \gcl{1} \gev \gnl
\gvac{4} \gob{1}{K}
\gend}\stackrel{\equref{petljaH}}{\stackrel{nat.}{=}}
\scalebox{0.9}[0.9]{
\gbeg{7}{17}
\gvac{1} \got{2}{H} \got{1}{B} \got{1}{K} \gnl
\gvac{1} \gcmu \gcl{4} \gcl{5} \gnl
\gcn{1}{1}{3}{2} \gvac{1} \gmp{-} \gnl
\gcmu \gcl{1} \gnl
\gcl{6} \gbr \gnl
\gvac{1} \gcl{4} \gbr \gnl
\gvac{2} \gcl{2} \glm \gnl
\gvac{4} \grcm \gnl
\gvac{2} \gcn{1}{1}{1}{3} \gvac{1} \gibr \gnl
\gvac{1} \gcn{1}{1}{1}{3} \gvac{1} \gibr \gcl{7} \gnl
\gcn{1}{1}{1}{3} \gvac{1} \gibr \gcl{4} \gnl
\gvac{1} \gibr \gcl{1} \gnl
\gvac{1} \gmu \gcn{1}{1}{1}{0} \gnl
\gvac{2} \hspace{-0,34cm} \gbr \gnl
\gvac{2} \gmu \gcn{1}{1}{2}{0} \gnl
\gvac{3} \hspace{-0,34cm} \glmpt \gnot{\hspace{-0,32cm} \crta \ev} \grmptb \gnl
\gvac{6} \gob{1}{K}
\gend} \stackrel{\Phi_{H,H}}{\stackrel{\Phi_{H,B}}{=}}
\scalebox{0.9}[0.9]{
\gbeg{7}{17}
\gvac{1} \got{2}{H} \got{1}{B} \got{1}{K} \gnl
\gvac{1} \gcmu \gcl{4} \gcl{5} \gnl
\gcn{1}{1}{3}{2} \gvac{1} \gmp{-} \gnl
\gcmu \gcl{1} \gnl
\gcl{6} \gibr \gnl
\gvac{1} \gcl{4} \gibr \gnl
\gvac{2} \gcl{2} \glm \gnl
\gvac{4} \grcm \gnl
\gvac{2} \gcn{1}{1}{1}{3} \gvac{1} \gibr \gnl
\gvac{1} \gcn{1}{1}{1}{3} \gvac{1} \gbr \gcl{7} \gnl
\gcn{1}{1}{1}{3} \gvac{1} \gbr \gcl{4} \gnl
\gvac{1} \gibr \gcl{1} \gnl
\gvac{1} \gmu \gcn{1}{1}{1}{0} \gnl
\gvac{2} \hspace{-0,34cm} \gbr \gnl
\gvac{2} \gmu \gcn{1}{1}{2}{0} \gnl
\gvac{3} \hspace{-0,34cm} \glmpt \gnot{\hspace{-0,32cm} \crta \ev} \grmptb \gnl
\gvac{6} \gob{1}{K}
\gend}
$$

$$\stackrel{nat.}{=}
\scalebox{0.9}[0.9]{
\gbeg{6}{14}
\gvac{1} \got{2}{H} \got{1}{B} \got{1}{K} \gnl
\gvac{1} \gcmu \gbr \gnl
\gcn{1}{1}{3}{2} \gvac{1} \gbr \gcn{1}{1}{1}{2} \gnl
\gcmu \gcl{1} \gmp{-} \gcn{1}{8}{2}{2} \gnl
\gcl{2} \glm \gcn{1}{1}{1}{2} \gnl
\gvac{2} \grcm \gcn{1}{5}{0}{0} \gnl
\gcn{1}{1}{1}{3} \gvac{1} \gibr \gnl
\gvac{1} \gibr \gcl{1} \gnl
\gvac{1} \gmu \gcn{1}{1}{1}{0} \gnl
\gvac{2} \hspace{-0,34cm} \gbr \gnl
\gvac{2} \gcl{3} \gbr \gnl
\gvac{3} \gmu \gcn{1}{1}{1}{0} \gnl
\gvac{4} \hspace{-0,34cm} \glmpt \gnot{\hspace{-0,32cm} \crta \ev} \grmptb \gnl
\gvac{2} \gob{2}{K}
\gend}\stackrel{nat.}{\stackrel{\Phi_{H,H}}{=}}
\scalebox{0.9}[0.9]{
\gbeg{7}{16}
\gvac{2} \got{1}{H} \got{1}{B} \got{1}{K} \gnl
\gvac{2} \gcl{1} \gbr \gnl
\gvac{2} \gbr \gcn{1}{1}{1}{2} \gnl
\gvac{2} \gcn{1}{1}{1}{0} \hspace{-0,2cm} \gcmu \gcn{1}{10}{1}{1} \gnl
\gvac{2} \gibr \gmp{-} \gnl
\gvac{2} \gcn{1}{1}{1}{0} \gcl{2} \gcn{1}{6}{1}{1} \gnl
\gvac{1} \gcmu \gnl
\gvac{1} \gcn{1}{1}{1}{3} \glm \gnl
\gvac{2} \gbr \gnl
\gvac{1} \gcn{1}{1}{3}{1} \gvac{1} \gcl{2} \gnl
\gvac{1} \grcm \gnl
\gvac{1} \gcl{4} \gmu \gcn{1}{1}{1}{0} \gnl
\gvac{3} \hspace{-0,34cm} \gbr \gnl
\gvac{3} \gmu \gcn{1}{1}{2}{0} \gnl
\gvac{4} \hspace{-0,34cm} \glmpt \gnot{\hspace{-0,32cm} \crta \ev} \grmptb \gnl
\gvac{2} \gob{1}{K}
\gend} \stackrel{\Phi_{H,K}}{\stackrel{YD}{\stackrel{nat.}{=}}}
\scalebox{0.9}[0.9]{
\gbeg{7}{11}
\gvac{2} \got{1}{H} \gvac{1} \got{1}{B} \got{1}{K} \gnl
\gvac{1} \gwcm{3} \gbr \gnl
\gvac{1} \gcl{1} \gvac{1} \gbr \gcn{1}{6}{1}{1} \gnl
\gvac{1} \gcn{1}{1}{1}{0} \gcn{1}{1}{3}{1} \gvac{1} \gmp{-} \gnl
\gcmu \grcm \gcn{1}{1}{1}{0} \gnl
\gcl{1} \gbr \gcl{1} \gcn{1}{2}{0}{0}\gnl
\glm \gmu \gnl
\gvac{1} \gcl{3} \gvac{1} \hspace{-0,34cm} \gbr \gnl
\gvac{3} \gmu \gcn{1}{1}{2}{0} \gnl
\gvac{4} \hspace{-0,34cm} \glmpt \gnot{\hspace{-0,32cm} \crta \ev} \grmptb \gnl
\gvac{2} \gob{1}{K}
\gend} \stackrel{coass.}{\stackrel{ass.}{=}}
\scalebox{0.9}[0.9]{
\gbeg{7}{13}
\gvac{2} \got{1}{H} \gvac{2} \got{1}{B} \got{1}{K} \gnl
\gvac{1} \gwcm{3} \gvac{1} \gbr \gnl
\gvac{1} \gcn{1}{1}{1}{0} \gvac{1} \hspace{-0,34cm} \gcmu \gcn{1}{1}{2}{1} \gcn{1}{8}{2}{2} \gnl
\gvac{1} \gcl{5} \gcn{1}{1}{3}{1} \gvac{1} \gmp{-} \gcl{1} \gnl
\gvac{2} \gcl{3} \gvac{1} \gbr \gnl
\gvac{3} \gcn{1}{1}{3}{1} \gvac{1} \gcl{2} \gnl
\gvac{3} \grcm \gnl
\gvac{2} \gbr \gbr \gnl
\gvac{1} \glm \gbr \gcl{1} \gnl
\gvac{2} \gcl{3} \gmu \gcn{1}{1}{1}{0} \gnl
\gvac{4} \hspace{-0,34cm} \gmu \gcn{1}{1}{3}{0} \gnl
\gvac{5} \hspace{-0,34cm} \glmpt \gnot{\hspace{-0,32cm} \crta \ev} \grmptb \gnl
\gvac{3} \gob{1}{K}
\gend}
$$

$$\stackrel{\Phi_{H,H}}{\stackrel{(\ref{antipode2})}{=}}
\scalebox{0.9}[0.9]{
\gbeg{4}{6}
\got{1}{H} \gvac{1} \got{1}{B} \got{1}{K} \gnl
\gcl{3} \gvac{1} \gbr \gnl
\gvac{1} \gcn{1}{1}{3}{1} \gvac{1} \gcl{1} \gnl
\gvac{1} \grcm \gcl{1} \gnl
\glm \glmpt \gnot{\hspace{-0,32cm} \crta \ev} \grmptb \gnl
\gvac{1} \gob{1}{K}
\gend} \stackrel{\equref{crta ev}}{\stackrel{\Phi_{H,B}}{\stackrel{nat.}{=}}}
\scalebox{0.9}[0.9]{
\gbeg{4}{5}
\got{1}{H} \got{1}{B} \got{1}{K} \gnl
\gcl{2} \gcl{1} \grcm \gnl
\gvac{1} \gbr \gcl{1} \gnl
\glm \gev \gnl
\gvac{1} \gob{1}{K}
\gend}\stackrel{\equref{BHmod-conv}}{\stackrel{\G}{=}}
\scalebox{0.9}[0.9]{
\gbeg{4}{5}
\got{1}{H} \got{1}{B} \got{1}{\hspace{0,24cm}\G(K)} \gnl
\gcl{1} \glm \gnl
\gcn{1}{1}{1}{3} \gvac{1} \gcl{1} \gnl
\gvac{1} \glm \gnl
\gvac{2} \gob{1}{\G(K)}
\gend}
$$
By \equref{B tie H -mod} and \equref{D(H)-action} this proves that $\G(K)$ is a module over $B\bowtie H$.
From \prref{Hmod-H*comod} we then know that $\F$ and $\G$ make an isomorphism of categories.
Let us show that $\F$ is a monoidal functor. Take $M, N\in {}_{(H^{op})^*\bowtie H}\C$, then:
$$\hspace{0,24cm}\scalebox{0.9}[0.9]{
\gbeg{3}{6}
\got{1}{\F(M\ot N)} \gnl
\gcl{1} \gnl
\grcm \gnl
\gcl{2} \gcn{1}{1}{1}{3} \gnl
\gvac{2} \gcl{1} \gnl
\gob{1}{\F(M\ot N)} \gob{3}{H}
\gend}\hspace{-0,14cm}=\hspace{-0,34cm}
\scalebox{0.9}[0.9]{
\gbeg{5}{9}
\gvac{3} \got{2}{M} \got{1}{\hspace{-0,14cm}N} \gnl
\gwdb{3} \gcn{1}{3}{2}{2} \gcn{1}{4}{2}{2} \gnl
\gcl{2} \gvac{1} \hspace{-0,34cm} \gcmu \gnl
\gvac{2} \gibr \gnl
\gcn{1}{1}{2}{3} \gvac{1} \gcn{1}{1}{1}{1} \gbr \gnl
\gvac{1} \gcn{1}{1}{1}{3} \glm \glm \gnl
\gvac{2} \gbr \gcn{1}{1}{3}{1} \gnl
\gvac{2} \gcl{1} \gbr \gnl
\gvac{2} \gob{1}{M} \gob{1}{N} \gob{1}{H} \gend}
=
\scalebox{0.84}[0.84]{
\gbeg{6}{8}
\gvac{4} \got{1}{M} \got{1}{N} \gnl
\gdb \gdb \gcl{1} \gcl{2} \gnl
\gcl{1} \gibr \gbr \gnl
\gmu \glm \glm \gnl
\gcn{1}{1}{2}{3} \gvac{1} \gcn{1}{1}{3}{1} \gvac{1} \gcn{1}{1}{3}{1} \gnl
\gvac{1} \gbr \gcn{1}{1}{3}{1} \gnl
\gvac{1} \gcl{1} \gbr \gnl
\gvac{1} \gob{1}{M} \gob{1}{N} \gob{1}{H} \gend}
\stackrel{nat.}{=} \hspace{-0,4cm}
\scalebox{0.9}[0.9]{
\gbeg{6}{10}
\gvac{2} \got{1}{M} \gvac{2} \got{1}{N} \gnl
\gdb \gcl{1} \gdb \gcl{1} \gnl
\gcl{1} \glm \gcl{1} \glm \gnl
\gcn{1}{1}{1}{3} \gvac{1} \gibr \gvac{1} \gcl{3} \gnl
\gvac{1} \gbr \gcl{3} \gnl
\gvac{1} \gmu \gnl
\gvac{1} \gcn{1}{1}{2}{3} \gcn{1}{2}{7}{5} \gnl
\gvac{2} \gbr \gnl
\gvac{2} \gcl{1} \gbr \gnl
\gvac{2} \gob{1}{M} \gob{1}{N} \gob{1}{H}
\gend}\stackrel{nat.}{=} \hspace{-0,54cm}
\scalebox{0.9}[0.9]{
\gbeg{6}{9}
\gvac{2} \got{1}{M} \gvac{2} \got{1}{N} \gnl
\gdb \gcl{1} \gdb \gcl{1} \gnl
\gcn{1}{1}{1}{3} \glm \gcn{1}{1}{1}{3} \glm \gnl
\gvac{1} \gbr \gvac{1} \gbr \gnl
\gvac{1} \gcl{4} \gcn{1}{1}{1}{3} \gvac{1} \gcl{1} \gcl{2} \gnl
\gvac{3} \gbr \gnl
\gvac{3} \gcl{2} \gbr \gnl
\gvac{4} \gmu \gnl
\gvac{1} \gob{1}{M} \gvac{1} \gob{1}{N} \gob{2}{H}
\gend}= 
\scalebox{0.9}[0.9]{
\gbeg{5}{6}
\got{1}{\F(M)} \gvac{1} \got{1}{\F(N)} \gnl
\gcl{1} \gvac{1} \gcl{1} \gnl
\grcm \grcm \gnl
\gcl{2} \gbr \gcl{1} \gnl
\gvac{1} \gcl{1} \gmu \gnl
\gob{1}{\hspace{-0,34cm}\F(M)} \gob{2}{\F(N)} \gob{1}{\hspace{0,1cm}H^{op}}
\gend}
$$
Finally, for $M, N\in {}_{(H^{op})^*\bowtie H}\C$ consider:
$$\Psi_{M, N}:=
\scalebox{0.9}[0.9]{
\gbeg{5}{9}
\gvac{4} \got{1}{M} \got{1}{N} \gnl
\gu{1} \gvac{2} \gu{1} \gcl{3} \gcl{4} \gnl
\gcl{1} \gdb \gcl{1} \gnl
\glmpt \gnot{\hspace{-0,3cm}D(H)} \grmptb
 \glmpt \gnot{\hspace{-0,3cm}D(H)} \grmptb \gnl
\gvac{1} \gcn{1}{1}{1}{3} \gvac{1} \gbr \gnl
\gvac{2} \glm \glm \gnl
\gvac{2} \gcn{1}{1}{3}{4} \gcn{1}{1}{5}{4} \gnl
\gvac{4} \hspace{-0,34cm} \gbr \gnl
\gvac{4} \gob{1}{N} \gob{2}{\hspace{-0,2cm}M}
\gend} \hspace{0,2cm}= \hspace{-0,2cm}
\scalebox{0.9}[0.9]{
\gbeg{4}{7}
\got{1}{} \got{1}{} \got{1}{M} \got{1}{N} \gnl
\gdb \gcl{1} \gcl{2} \gnl
\gcl{1} \gbr \gnl
\glm \glm \gnl
\gcn{1}{1}{3}{4} \gvac{1} \gcn{1}{1}{3}{2} \gnl
\gvac{2} \hspace{-0,34cm} \gbr \gnl
\gvac{2} \gob{1}{N} \gob{1}{M}
\gend}\stackrel{nat.}{=}
\scalebox{0.9}[0.9]{
\gbeg{4}{9}
\got{1}{} \got{1}{} \got{1}{M} \got{1}{N} \gnl
\gdb \gcl{1} \gcl{2} \gnl
\gcl{4} \gbr \gnl
\gvac{1} \gcl{1} \glm \gnl
\gvac{1} \gcl{1} \gcn{1}{1}{3}{1} \gnl
\gvac{1} \gbr \gnl
\gbr \gcl{1} \gnl
\gcl{1} \glm \gnl
\gob{1}{N} \gvac{1} \gob{1}{M}
\gend}\stackrel{nat.}{=}
\scalebox{0.9}[0.9]{
\gbeg{4}{10}
\got{1}{} \got{1}{} \got{1}{M} \got{1}{N} \gnl
\gdb \gcl{1} \gcl{2} \gnl
\gcl{1} \gbr \gnl
\gbr \glm \gnl
\gcl{2} \gcl{1} \gcn{1}{1}{3}{1} \gnl
\gvac{1} \gbr \gnl
\gbr \gcl{1} \gnl
\gcl{1} \gibr \gnl
\gcl{1} \glm \gnl
\gob{1}{N} \gvac{1} \gob{1}{M}
\gend}\stackrel{\F}{=}
\scalebox{0.9}[0.9]{
\gbeg{3}{6}
\got{1}{M} \got{1}{N} \gnl
\gcl{1} \grcm \gnl
\gbr \gcl{1} \gnl
\gcl{1} \gibr \gnl
\gcl{1} \glm \gnl
\gob{1}{N} \gvac{1} \gob{1}{M}
\gend}\stackrel{\Phi_{H,M}}{\stackrel{nat.}{=}}
\scalebox{0.9}[0.9]{
\gbeg{4}{6}
\got{1}{M} \got{1}{N} \gnl
\gbr \gnl
\gcn{1}{1}{1}{0} \gcn{1}{1}{1}{2} \gnl
\hspace{-0,34cm} \grcm \gcn{1}{1}{1}{1} \gnl
\gcn{1}{1}{1}{1} \glm \gnl
\gob{1}{N} \gvac{1} \gob{1}{M.}
\gend}
$$
Note that the right hand-side is $\Phi^{1+}_{M, N}$. Then we have that $\Psi$ becomes the braiding in
${}_{(H^{op})^*\bowtie H}\C$. Its inverse is given by: \vspace{-0,24cm}
$$\Psi_{M, N}^{-1}
\scalebox{0.9}[0.9]{
\gbeg{4}{6}
\got{1}{} \got{1}{} \got{1}{N} \got{1}{M} \gnl
\gdb \gcl{1} \gcl{1} \gnl
\gcl{1} \gmp{-} \gbr \gnl
\gcl{1} \gbr \gcl{1} \gnl
\glm \glm \gnl
\gvac{1} \gob{1}{M} \gvac{1} \gob{1}{N.}
\gend}
$$

\begin{prop} \prlabel{YD-DH}
Assume $H\in\C$ is a finite Hopf algebra with a bijective antipode. Suppose that $\Phi_{H, M}$ is symmetric
for all $M\in {}_H\YD(\C)^{H^{op}}$. The categories ${}_H\YD(\C)^{H^{op}}$ and ${}_{D(H)}\C$ are isomorphic
as braided monoidal categories.
\end{prop}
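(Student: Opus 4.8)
The plan is to show that the pair of functors $\F\colon {}_{D(H)}\C\to {}_H\YD(\C)^{H^{op}}$ and $\G$ in the opposite direction, introduced above, forms a mutually inverse pair giving an isomorphism of braided monoidal categories. Both functors act as the identity on underlying objects and on morphisms, so the whole task reduces to checking compatibilities of structure morphisms, and essentially all of those checks have already been carried out in the discussion preceding the statement; the proof therefore consists in assembling them.

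First I would record well-definedness on objects. For $M\in {}_{D(H)}\C$ one equips $\F(M)=M$ with the $H$-action through $\eta_B\ot H$ and with the right $H^{op}$-coaction displayed above, which by \prref{Hmod-H*comod} corresponds to the $H^*$-action restricted from $D(H)$; the long graphical computation above shows that this module/comodule pair satisfies the Yetter--Drinfel'd condition $(\ref{YD-mix*})$, so $\F(M)\in {}_H\YD(\C)^{H^{op}}$. Conversely, for $K\in {}_H\YD(\C)^{H^{op}}$ one gives $\G(K)=K$ the indicated $H^*$-action and keeps its $H$-action; the second graphical computation verifies $\equref{B tie H -mod}$ together with $\equref{D(H)-action}$, so $\G(K)$ is a $D(H)$-module by \leref{matched pair mod}. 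Since the assignments between right $H$-comodule and left $H^*$-module structures are mutually inverse and compatible with morphisms by \prref{Hmod-H*comod}, while the $H$-module part is left untouched in both directions, $\F$ and $\G$ are mutually inverse isomorphisms of categories. This is the only genuinely laborious step: the symmetricity hypothesis $\Phi_{H,M}=\Phi_{M,H}^{-1}$ (hence also $\Phi_{H^*,M}$ symmetric by \leref{H - H* transp.}) is exactly what lets one slide strands past $H$- and $H^*$-legs and invoke $(\ref{antipode2})$ and $\crta\ev=\ev\Phi$ inside those computations.

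It remains to upgrade the category isomorphism to a braided monoidal one. On the tensor product both sides use the diagonal $H$-action (through $\eta_B\ot H$ on the double), so the module parts of $\F(M\ot N)$ and of $\F(M)\ot\F(N)$ coincide, and the computation above shows that so do the $H^{op}$-coactions; on the unit object all $H$- and $H^*$-(co)actions are trivial. Hence $\F$ is strictly monoidal. Finally, transporting along $\F$ the braiding $\Phi^{1+}$ of ${}_H\YD(\C)^{H^{op}}$ (from \prref{prva cat}) yields the morphism $\Psi_{M,N}$, and the displayed chain of equalities rewrites $\Psi_{M,N}$ purely in terms of the $D(H)$-module structures of $M$ and $N$, with inverse the morphism $\Psi_{M,N}^{-1}$ exhibited above; the braiding axioms and naturality then transfer automatically from ${}_H\YD(\C)^{H^{op}}$. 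Collecting these facts shows that ${}_{D(H)}\C$ and ${}_H\YD(\C)^{H^{op}}$ are isomorphic as braided monoidal categories.
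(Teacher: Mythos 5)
Your proposal is correct and follows essentially the same route as the paper: the paper's own ``proof'' of \prref{YD-DH} is precisely the assembly of the preceding computations --- well-definedness of $\F$ and $\G$ on objects via the Yetter--Drinfel'd and $D(H)$-module verifications, mutual inverseness via \prref{Hmod-H*comod}, strict monoidality of $\F$, and the identification of the transported braiding with $\Psi_{M,N}$ coming from the quasitriangular structure. Nothing essential is missing.
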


\vspace{0,2cm}

In \cite[Definition 1.2]{Maj2} Majid defined an ``opposite comultiplication'' $\Delta^{op}$
for a bialgebra $H$. Let $\Oo(H, \Delta^{op})$ denote the subcategory of those
$H$-modules with respect to which $\Delta^{op}$ is an opposite comultiplication. If
$\R:I\to H\ot H$ is a quasitriangular structure for $H$, \cite[Definition 1.3]{Maj2},
then by \cite[Proposition 3.2]{Maj2} the subcategory $\Oo(H, \Delta^{op})$ is braided by
$$ \scalebox{0.88}[0.88]{
\bfig
\putmorphism(-2300,0)(1,0)[M\ot N`H\ot H\ot M\ot N`\R\ot M\ot N]{1100}1a
\putmorphism(-1200,0)(1,0)[\phantom{M\ot N\ot H\ot H}`H\ot M\ot H\ot N`
 H\ot\Phi_{H, M}\ot N]{1400}1a
\putmorphism(100,0)(1,0)[\phantom{\C(A\ot M\ot X\ot B, N)}`M\ot N`
 \mu_M\ot\mu_N]{1000}1a
\putmorphism(1100,0)(1,0)[\phantom{M\ot N)}`N\ot M.`\Phi_{M,N}]{600}1a
\efig}
$$
We denote this composition by $\Phi(\R)$. It is straightforward to check that if $\Phi_{H, M}$ is
symmetric for all $H$-modules $M$ in $\C$, then $\Delta^{op}:=\Phi_{H, H}\Delta_H$ is an opposite
comultiplication for $H$ with respect to the whole category ${}_H\C$, i.e. $\Oo(H, \Delta^{op})
={}_H\C$. The same is true for $\C_H$. In particular, the above holds for $D(H)$. The morphism:
$$\R:=
\scalebox{0.9}[0.9]{
\gbeg{3}{5}
\got{1}{} \gnl
\gu{1} \gvac{2} \gu{1} \gnl
\gcl{1} \gdb \gcl{2} \gnl
\gcl{1} \gcl{1} \gcl{1} \gnl
\gob{1}{\hspace{-0,1cm}H^*} \gob{1}{H} \gob{1}{H^*} \gob{1}{\hspace{-0,12cm}H}
\gend}
$$
is a quasitriangular structure for $D(H)$ and it induces a braiding $\Phi(\R)$ on ${}_{D(H)}\C$. Note
that it equals to our $\Psi$ from above. As it is the case in the category of modules over a commutative
ring and a usual quasitriangular Hopf algebra, the axioms of quasitriangularity of $D(H)$ are
equivalent to the two braiding axioms for $\Psi$, its left $D(H)$-linearity and invertibility,
{\em if 
$\Phi_{H, M}$ is symmetric for all $M\in {}_{D(H)}\C$}.
\par\medskip

\subsection{Bosonization and an isomorphism of categories}

Bespalov proved in \cite[Lemma 5.3.1 and Section 5.4]{Besp} that
a left (right) module over a quasitriangular bialgebra $(H, \R)$ can be equipped with a left
(right) comodule structure over $H$ so that the subcategory $\Oo(H, \Delta^{op})$ becomes a full
braided subcategory of ${}_H^H\YD(\C)$ ($\YD(\C)_H^H$). This is a braided version of the classical
result from \cite{Maj}.

Assume that $H$ is a quasitriangular Hopf algebra with respect to the whole category $\C_H$ (e.g.
if $\Phi_{H, M}$ is symmetric for all $M\in\C_H$). Then $\C_H$ is braided. Let
$B$ be a Hopf algebra in $\C_H$. Equipped with a right $H$-comodule structure:
$$\rho_B=
\scalebox{0.9}[0.9]{
\gbeg{3}{5}
\got{1}{B} \gnl
\gcl{1} \gvac{1}  \glmpb \gnot{\hspace{-0,4cm}\R} \grmpb \gnl
\gcl{1} \gcn{1}{1}{3}{1} \gvac{1} \gcn{1}{1}{1}{3} \gnl
\grm \gvac{2} \gcl{1} \gnl
\gob{1}{B} \gvac{3} \gob{1}{H}
\gend}
$$
$B$ becomes a right-right YD-module. The structure morphisms of $B$ are right $H$-linear. Since
$H$ is quasitriangular, they turn out to be also right $H$-colinear. We show this for the multiplication:
$$\hspace{-0,2cm} \scalebox{0.9}[0.9]{
\gbeg{2}{4}
\got{1}{B} \got{1}{B} \gnl
\gmu \gnl
\gvac{1}  \hspace{-0,2cm} \grcm \gnl
\gvac{1}  \gob{1}{B} \gob{1}{H}
\gend}= \hspace{-0,2cm}
\scalebox{0.9}[0.9]{
\gbeg{5}{6}
\got{1}{B} \gvac{1} \got{1}{B} \gnl
\gwmu{3} \gnl
\gvac{1} \gcl{1} \gvac{1} \glmpb \gnot{\hspace{-0,4cm}\R} \grmpb \gnl
\gvac{1} \gcl{1} \gcn{1}{1}{3}{1} \gvac{1} \gcn{1}{1}{1}{3} \gnl
\gvac{1} \grm \gvac{2} \gcl{1} \gnl
\gvac{1} \gob{1}{B} \gvac{2} \gob{3}{H}
\gend}\hspace{0,2cm}= \hspace{-0,2cm}
\scalebox{0.9}[0.9]{
\gbeg{5}{8}
\got{1}{B} \got{1}{B} \gnl
\gcl{4} \gcl{3} \gvac{1} \glmpb \gnot{\hspace{-0,4cm}\R} \grmpb \gnl
\gvac{2} \gcn{1}{1}{3}{2} \gvac{1} \gcn{1}{1}{1}{3} \gnl
\gvac{2} \gcmu \gvac{1} \gcl{4} \gnl
\gvac{1} \gbr \gcl{1} \gnl
\grm \grm \gnl
\gwmu{3} \gnl
\gvac{1} \gob{1}{B} \gvac{2} \gob{3}{H}
\gend} \hspace{0,2cm} \stackrel{\star}{=} \hspace{-0,2cm}
\scalebox{0.9}[0.9]{
\gbeg{5}{9}
\got{1}{B} \got{1}{B} \gnl
\gcl{5} \gcl{4} \gvac{1} \glmpb \gnot{\hspace{-0,4cm}\R} \grmpb \gnl
\gvac{2} \gcn{1}{1}{3}{1} \gvac{1} \gcn{1}{1}{1}{3} \gnl
\gvac{2} \gcl{1} \glmpb \gnot{\hspace{-0,4cm}\R} \grmpb \gcl{1} \gnl
\gvac{2} \gibr \gmu \gnl
\gvac{1} \gbr \gcl{1} \gcn{1}{3}{2}{2} \gnl
\grm \grm \gnl
\gwmu{3} \gnl
\gvac{1} \gob{1}{B} \gvac{2} \gob{2}{H}
\gend} \hspace{0,2cm} \stackrel{\Phi_{H,B}}{\stackrel{nat.}{=}} \hspace{-0,2cm}
\scalebox{0.9}[0.9]{
\gbeg{8}{7}
\got{1}{B} \gvac{3} \got{1}{B} \gnl
\gcl{2} \gvac{1}  \glmpb \gnot{\hspace{-0,4cm}\R} \grmpb
 \gcl{2} \gvac{1}  \glmpb \gnot{\hspace{-0,4cm}\R} \grmpb \gnl
\gcl{1} \gcn{1}{1}{3}{1} \gvac{1} \gcl{2} \gvac{1} \gcn{1}{1}{3}{1} \gvac{1} \gcl{3} \gnl
\grm \gvac{2} \grm \gnl
\gcl{1} \gvac{2} \gbr \gnl
\gwmu{4} \gwmu{4} \gnl
\gvac{1} \gob{2}{B} \gvac{2} \gob{2}{H}
\gend} \hspace{0,2cm} = \hspace{-0,2cm}
\scalebox{0.9}[0.9]{
\gbeg{4}{5}
\got{1}{B} \gvac{1} \got{1}{B} \gnl
\grcm  \grcm  \gnl
\gcl{1} \gbr \gcl{1} \gnl
\gmu \gmu \gnl
\gob{2}{B} \gob{2}{H}
\gend}
$$
where at the place $\star$ we applied the quasitriangular axiom $(\Delta^{op}\ot H)\R=\R_{23}\R_{13}$.
Since $\C_H$ is a braided subcategory of $\YD(\C)^H_H$, we have that the braiding in $\C_H$ induced by $\R$
(the right hand-side version of $\Phi(\R)$) equals $\Phi^R$ from \equref{braid-LR} and $B$ is indeed a Hopf
algebra in $\YD(\C)^H_H$. By \cite[Theorem 4.1.2]{Besp} the cross product algebra $H\ltimes B$ is then a Hopf
algebra in
$\C$, the bosonization of the braided Hopf algebra $B$. Its multiplication and comultiplication are given by:
$$
\scalebox{0.86}{
\gbeg{4}{6}
\got{1}{H} \got{1}{B} \got{2}{H} \got{1}{B} \gnl
\gcl{1} \gcl{1} \gcmu \gcl{1} \gnl
\gcl{1} \gbr \gcl{1} \gcl{1} \gnl
\gcl{1} \gcl{1} \grm \gcl{1} \gnl
\gmu \gwmu{3} \gnl
\gob{2}{H} \gob{3}{B}
\gend}\qquad\textnormal{and}\quad
\scalebox{0.86}{
\gbeg{4}{6}
\got{2}{H} \got{3}{B} \gnl
\gcmu \gwcm{3} \gnl
\gcl{3} \gcl{1} \grcm \gcl{3} \gnl
\gvac{1} \gbr \gcl{1} \gnl
\gvac{1} \gcl{1} \gmu \gnl
\gob{1}{H} \gob{1}{B} \gob{2}{H} \gob{1}{B}
\gend}
$$
which are the tensor product algebra and coalgebra respectively in the category $\YD(\C)^H_H$.
The antipode of $H\ltimes B$ is given by $S_{H\ltimes B}:=
\Phi^R_{B,H}(S_B\ot S_H)\Phi^R_{H,B}$. 
Similarly as in \leref{matched pair mod} one proves that the categories $\C_{H\ltimes B}$
and $(\C_H)_B$ are isomorphic. An object of the latter category is a right
$H$- and a right $B$-module $M$ satisfying the compatibility condition:
$$
\scalebox{0.86}{
\gbeg{4}{6}
\got{1}{M} \gvac{1} \got{1}{B} \got{1}{H} \gnl
\gcl{1} \gcn{1}{1}{3}{1} \gvac{1} \gcl{1} \gnl
\grm \gcn{1}{2}{3}{-1} \gnl
\gcl{1} \gnl
\grm \gnl
\gob{1}{M}
\gend}=
\scalebox{0.86}{
\gbeg{4}{7}
\got{1}{M} \got{1}{B} \got{2}{H} \gnl
\gcl{1} \gcl{1} \gcmu \gnl
\gcl{1} \gbr \gcl{1} \gnl
\grm \grm \gnl
\gcl{1} \gcn{1}{1}{3}{1} \gnl
\grm \gnl
\gob{1}{M}
\gend}
$$
Moreover, the isomorphism $\F: (\C_H)_B \to \C_{H\ltimes B}$ is monoidal, since for
$M, N\in (\C_H)_B$ it is:
$$\hspace{0,24cm}\scalebox{0.9}[0.9]{
\gbeg{3}{6}
\got{1}{\hspace{-0,34cm}\F(M\ot N)} \gvac{1} \got{1}{H\ltimes B} \gnl
\gcl{1} \gvac{1} \gcl{1} \gnl
\gcl{1} \gcn{1}{1}{3}{1} \gnl
\grm \gnl
\gcl{1} \gnl
\gob{1}{\F(M\ot N)}
\gend}\hspace{-0,14cm}=\hspace{-0,34cm}
\scalebox{0.9}[0.9]{
\gbeg{5}{8}
\got{1}{M} \got{1}{N} \got{2}{H} \got{1}{B} \gnl
\gcl{3} \gcl{1} \gcmu \gcl{2} \gnl
\gcl{2} \gbr \gcl{1} \gnl
\grm \grm \gcn{1}{1}{1}{0}\gnl
\gcn{1}{2}{1}{3} \gvac{1} \gcn{1}{1}{1}{1} \gcmu \gnl
\gvac{2} \glmptb \gnot{\hspace{-0,4cm}\Phi(\R)} \grmptb \gcl{1} \gnl
\gvac{1} \grm \grm \gnl
\gvac{1} \gob{1}{M} \gob{3}{N} \gend} 
=\hspace{-0,34cm}
\scalebox{0.9}[0.9]{
\gbeg{5}{8}
\got{1}{M} \got{1}{N} \got{2}{H} \got{1}{B} \gnl
\gcl{3} \gcl{1} \gcmu \gcl{2} \gnl
\gcl{2} \gbr \gcl{1} \gnl
\grm \grm \gcn{1}{1}{1}{0}\gnl
\gcn{1}{2}{1}{3} \gvac{1} \gcn{1}{1}{1}{1} \gcmu \gnl
\gvac{2} \glmptb \gnot{\hspace{-0,3cm}\Phi^R} \grmptb \gcl{1} \gnl
\gvac{1} \grm \grm \gnl
\gvac{1} \gob{1}{M} \gob{3}{N} \gend} = \hspace{-0,34cm}
\scalebox{0.9}[0.9]{
\gbeg{7}{8}
\got{1}{M} \got{1}{N} \got{2}{H} \got{3}{B} \gnl
\gcl{3} \gcl{1} \gcmu \gwcm{3} \gnl
\gcl{2} \gbr \gcl{1} \grcm \gcl{3} \gnl
\grm \grm \gcn{1}{1}{1}{-1} \gcl{1} \gnl
\gcn{1}{1}{1}{3} \gvac{1} \gbr \gcn{1}{1}{3}{1} \gnl
\gvac{1} \grm \grm \gcn{2}{1}{3}{-1} \gnl
\gvac{1} \gcl{1} \gvac{1} \grm \gnl
\gvac{1} \gob{1}{M} \gob{3}{N} \gend}
=  \hspace{-0,34cm} \scalebox{0.9}[0.9]{
\gbeg{7}{9}
\got{1}{M} \got{1}{N} \got{2}{H} \got{3}{B} \gnl
\gcl{3} \gcl{1} \gcmu \gwcm{3} \gnl
\gcl{2} \gcl{1} \gcl{1} \gcl{1} \grcm \gcl{3} \gnl
\gvac{1} \gbr \gbr \gcl{1} \gnl
\grm \gbr \gmu \gnl
\gcl{1} \gcn{1}{1}{3}{1} \gvac{1} \gcl{1} \gcn{1}{1}{2}{1} \gcn{2}{2}{3}{-1} \gnl
\grm \gvac{1} \grm \gnl
\gcl{1} \gvac{2} \grm \gnl
\gob{1}{M} \gvac{1} \gob{3}{N} \gend}
= 
\scalebox{0.9}[0.9]{
\gbeg{5}{6}
\got{1}{\F(M)\ot\F(N)} \gvac{2} \got{1}{H\ltimes B} \gnl
\gcl{1} \gvac{1} \gcn{1}{1}{3}{1} \gnl
\gcl{1} \gcn{1}{1}{3}{1} \gnl
\grm \gnl
\gcl{1} \gnl
\gob{1}{\F(M)\ot\F(N).}
\gend}
$$
Thus we have proved:

\begin{prop} \prlabel{cross prod isom}
Let $H$ be a quasitriangular Hopf algebra such that $\Phi_{H, M}$ is symmetric
for all $M\in\C_H$. Let $B$ be a Hopf algebra in $\C_H$. Then $H\ltimes B$ is a Hopf algebra in $\C$
and there is a monoidal isomorphism of categories $\C_{H\ltimes B}\iso(\C_H)_B$.
\end{prop}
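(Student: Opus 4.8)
The statement is essentially assembled from the computations carried out just above it, so the plan is to organise them into a proof in the following order. First, I would note that the hypothesis ``$\Phi_{H,M}$ symmetric for all $M\in\C_H$'' forces $\Oo(H,\Delta^{op})=\C_H$, so by \cite[Proposition 3.2]{Maj2} the quasitriangular structure $\R$ endows the \emph{whole} category $\C_H$ with the braiding $\Phi(\R)$; this is the ambient braided monoidal category in which $B$ lives, and on it $\Phi(\R)$ coincides with the Yetter--Drinfel'd braiding $\Phi^R$ of \equref{braid-LR} once the appropriate $H$-comodule structures are introduced.

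Second, I would equip the Hopf algebra $B\in\C_H$ with the right $H$-comodule structure $\rho_B$ built from $\R$ and the $H$-action displayed above, and check that $B$ thereby becomes a right-right Yetter--Drinfel'd module over $H$ in $\C$, i.e. an object of $\YD(\C)^H_H$. The right $H$-linearity of $\eta_B$, $\nabla_B$, $\Epsilon_B$ and $\Delta_B$ is automatic since these are morphisms in $\C_H$; the real content is that they are also right $H$-colinear for $\rho_B$. This is where quasitriangularity does the work: the computation for $\nabla_B$ is the displayed one, whose step $\star$ uses the axiom $(\Delta^{op}\ot H)\R=\R_{23}\R_{13}$, and the analogous verifications for $\Delta_B$, $\eta_B$, $\Epsilon_B$ use the remaining quasitriangular axioms together with $\Phi_{H,B}$ being symmetric. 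One then concludes that $B$ is in fact a Hopf algebra in $\YD(\C)^H_H$, not merely in $\C_H$.

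Third, I would invoke \cite[Theorem 4.1.2]{Besp}: a Hopf algebra in $\YD(\C)^H_H$ admits a bosonization, so the cross product algebra $H\ltimes B$ is a Hopf algebra in $\C$, with the multiplication and comultiplication displayed above (the tensor-product algebra and coalgebra of $\YD(\C)^H_H$) and antipode $S_{H\ltimes B}=\Phi^R_{B,H}(S_B\ot S_H)\Phi^R_{H,B}$. For the category isomorphism I would then run the argument of \leref{matched pair mod} verbatim: expand the axiom for a right $H\ltimes B$-module in terms of the cross-product multiplication, and by evaluating against the units show it is equivalent to $M$ being simultaneously a right $H$-module and a right $B$-module satisfying the compatibility condition displayed above; conversely that condition together with the two module axioms reconstructs the $H\ltimes B$-action. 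Since the assignments are identities on objects and morphisms, they give mutually inverse functors $\F:(\C_H)_B\to\C_{H\ltimes B}$ and $\G$.

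Finally, I would check that $\F$ is monoidal: for $M,N\in(\C_H)_B$ the $H\ltimes B$-action on $\F(M\ot N)$ must agree with that on $\F(M)\ot\F(N)$, which is precisely the displayed diagram chain, using coassociativity/associativity, the definition of $\Delta_{H\ltimes B}$, and the coincidence of $\Phi(\R)$ with $\Phi^R$ on $\C_H$. The main obstacle is the second step: confirming that \emph{all four} structure morphisms of $B$ are $H$-colinear for $\rho_B$, so that $B$ is a genuine Hopf algebra object in $\YD(\C)^H_H$; everything afterwards — the bosonization, the module-category isomorphism, and monoidality of $\F$ — is routine and parallels \leref{matched pair mod} and the preceding Drinfel'd-double arguments.
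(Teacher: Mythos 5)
Your proposal is correct and follows essentially the same route as the paper: endow $B$ with the comodule structure coming from $\R$, use the quasitriangular axiom $(\Delta^{op}\ot H)\R=\R_{23}\R_{13}$ to get $H$-colinearity of the structure morphisms so that $B$ is a Hopf algebra in $\YD(\C)^H_H$, invoke \cite[Theorem 4.1.2]{Besp} for the bosonization, and then adapt \leref{matched pair mod} plus the displayed monoidality computation for the category isomorphism. Nothing essential is missing.
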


\section{Other versions of Yetter-Drinfel'd categories} \selabel{other-cases}
\setcounter{equation}{0}

We start this section by giving equivalent conditions for the left-left and the right-right
YD-compatibility conditions and relating the corresponding categories with that of modules over
the Drinfel'd double. Subsequently, we will study two versions of left-right, as well as two
versions of right-left YD-categories. At the end we will relate all the categories we have studied.

\subsection{Left-left and right-right YD-modules as modules over the Drinfel'd double} \sslabel{left-left}

At the beginning of \seref{YD-mods} we noted that the categories ${}_H ^H\YD(\C)$ of left-left YD-modules
and $\YD(\C)_H^H$, of right-right YD-modules, are braided monoidal categories without any further conditions.
However, in order to prove that these categories are isomorphic to that of left (respectively right) $D(H)$-modules
in $\C$ for a finite Hopf algebra $H$ with a bijective antipode, one has to require the
same symmetricity conditions on the braiding as in \prref{YD-DH}. Before supporting this claim, we
note that the expressions \equref{left YD} and \equref{right YD} are equivalent to: \vspace{-0,9cm}
\begin{center}
\begin{tabular}{p{6.4cm}p{0cm}p{8.4cm}}
\begin{equation} \eqlabel{left YD-oth} \hspace{-0,2cm}
\gbeg{4}{8}
\got{1}{H} \got{5}{N} \gnl
\gcn{2}{2}{1}{5} \gvac{1} \gcl{2} \gnl \gnl
\gvac{2} \glm \gnl
\gvac{2} \glcm \gnl
\gcn{2}{2}{5}{1} \gvac{1} \gcl{2} \gnl \gnl
\gob{1}{H} \gob{5}{N}
\gend=
\gbeg{6}{9}
\gvac{1} \got{2}{H} \got{4}{N} \gnl
\gvac{1} \gcmu \gcn{1}{1}{4}{4} \gnl
\gvac{1} \gcn{1}{1}{1}{0} \hspace{-0,21cm} \gcmu \glcm \gnl
\gvac{1} \gcl{3} \gbr \gcl{1} \gcl{3} \gnl
\gvac{2}  \gmp{+} \gbr \gnl
\gvac{1} \gvac{1} \gbr \gcl{1} \gnl
\gvac{1} \gcn{1}{1}{1}{2} \gmu \glm \gnl
\gvac{2} \hspace{-0,21cm} \gmu \gcn{1}{1}{4}{4} \gnl
\gvac{2} \gob{2}{H} \gob{4}{N}
\gend
\quad
\end{equation}
& &
\begin{equation} \eqlabel{right YD-oth} \textnormal{and}\qquad\quad
\gbeg{4}{8}
\got{1}{L} \got{5}{H} \gnl
\gcl{2} \gcn{2}{2}{5}{1} \gnl \gnl
\grm \gnl
\grcm \gnl
\gcl{2} \gcn{2}{2}{1}{5} \gnl \gnl
\gob{1}{L} \gob{5}{H}
\gend=
\gbeg{6}{9}
\got{1}{L} \got{5}{H} \gnl
\gcl{1} \gvac{2} \hspace{-0,34cm} \gcmu \gnl
\gvac{1} \hspace{-0,22cm} \grcm \gcmu \gcn{1}{1}{0}{1} \gnl
\gvac{1} \gcl{3} \gcl{1} \gbr \gcl{3} \gnl
\gvac{2} \gbr \gmp{+} \gnl
\gvac{2} \gcl{1} \gbr \gnl
\gvac{1} \grm \gmu \gcn{1}{1}{1}{0}\gnl
\gvac{1} \gcl{1} \gvac{2} \hspace{-0,34cm} \gmu \gnl
\gvac{1} \gob{2}{L} \gob{4}{H}
\gend
\end{equation}
\end{tabular}
\end{center}
respectively, if $\Phi_{H, N}$ ($\Phi_{H, L}$) is symmetric for
$N\in {}_H ^H\YD(\C)$ and $L\in \YD(\C)_H^H$. The same symmetricity conditions are necessary
to prove that ${}_H ^H\YD(\C)$ and $\YD(\C)_H^H$, characterized by \equref{left YD-oth} and
\equref{right YD-oth} respectively, are monoidal categories.

Consider the functors 	$
\bfig
\putmorphism(0,30)(1,0)[\F_l: {}_{D(H)}\C` {}_H ^H\YD(\C): \G_l`]{700}1a
\putmorphism(-40,-10)(1,0)[\phantom{{}_H ^H\YD(\C): \G_l}`\phantom{\F_l: {}_{D(H)}\C}` ]{660}{-1}b
\efig
$
defined by
$$
\scalebox{0.9}[0.9]{
\gbeg{3}{4}
\gvac{1} \got{3}{\F_l(M)} \gnl
\gvac{1} \glcm \gnl
\gcn{1}{1}{3}{1} \gvac{1} \gcl{1} \gnl
\gob{1}{H} \gob{3}{\F_l(M)}
\gend} = \scalebox{0.9}[0.9]{
\gbeg{7}{5}
\got{1}{} \got{1}{} \got{1}{M} \gnl
\gdb \gcl{2} \gnl
\gcl{1} \gmp{+} \gnl
\gcl{1} \glm \gnl
\gob{1}{H} \gob{3}{M}
\gend}\textnormal{and}\qquad
\scalebox{0.9}[0.9]{
\gbeg{3}{4}
\got{1}{H^*} \gvac{1} \got{1}{\G_l(N)} \gnl
\gcn{1}{1}{1}{3} \gvac{1} \gcl{1} \gnl
\gvac{1} \glm \gnl
\gvac{2} \gob{1}{\G_l(N)}
\gend} = \scalebox{0.9}[0.9]{
\gbeg{2}{5}
\got{1}{H^*} \got{3}{N} \gnl
\gcl{1} \glcm \gnl
\gcl{1} \gmp{-} \gcl{2} \gnl
\gev \gnl
\gob{5}{N}
\gend}
$$
for $M\in {}_{(H^{op})^*\bowtie H}\C$ and $N\in {}_H ^H\YD(\C)$, where $\F_l(M)$ is a left
$H$-module by the action of $\eta_B\ot H$ on $M$, and $\G_l(N)=N$ as a left $H$-module.
Even though one uses \equref{left YD} as the
defining relation for the category ${}_H ^H\YD(\C)$, one has that $\F_l$ and $\G_l$ define
an isomorphism of categories {\em if $\Phi_{H, N}$ is symmetric}.
We show here only that this is a monoidal isomorphism. Observe first: \vspace{-0,34cm}
\begin{equation}\eqlabel{YDDH-monoidal}
\scalebox{0.84}[0.84]{
\gbeg{4}{5}
\got{1}{} \gnl
\gwdb{3} \gnl
\gcl{2} \gvac{1} \hspace{-0,34cm} \gcmu \gnl
\gvac{2} \gcn{1}{1}{1}{1} \gcn{1}{1}{1}{1} \gnl
\gob{2}{H} \gob{1}{B} \gob{1}{B} \gend} =
\scalebox{0.84}[0.84]{
\gbeg{8}{6}
\got{2}{} \gnl
\gwdb{3} \gnl
\gcl{3} \gvac{1} \hspace{-0,34cm} \gcmu \gdb \gdb \gnl
\gvac{2} \gcl{1} \gbr \gibr \gcl{2} \gnl
\gvac{2} \gev \gev \gcl{1} \gnl
\gob{2}{H} \gvac{4} \gob{2}{\hspace{-0,14cm}H^*} \gob{1}{\hspace{-0,4cm}H^*} \gend}
\stackrel{\equref{B-codiag}}{=}
\scalebox{0.84}[0.84]{
\gbeg{7}{7}
\got{1}{} \gnl
\gvac{2} \gdb \gdb \gnl
\gdb \gcl{1} \gibr \gcl{1} \gnl
\gcn{1}{3}{1}{1} \gcn{1}{1}{1}{1} \gbr \gcl{3} \gcl{3} \gnl
\gvac{1} \gcn{1}{1}{1}{2} \gmu \gnl
\gvac{2} \hspace{-0,22cm} \gev \gnl
\gob{2}{H} \gvac{3} \gob{1}{\hspace{-0,2cm}H^*} \gob{1}{H^*} \gend} =
\scalebox{0.84}[0.84]{
\gbeg{6}{6}
\got{1}{} \gnl
\gdb \gdb \gnl
\gcl{1} \gibr \gcl{1} \gnl
\gbr \gcl{2} \gcl{2} \gnl
\gmu \gnl
\gob{2}{H} \gob{1}{H^*} \gob{1}{H^*} \gend}
\end{equation}
Now for $M, N\in {}_{D(H)}\C$ we have:
$$\scalebox{0.9}[0.9]{
\gbeg{4}{6}
\got{3}{\F_l(M\ot N)} \gnl
\gvac{2} \gcl{1} \gnl
\gvac{1} \glcm \gnl
\gcn{1}{1}{3}{1} \gvac{1} \gcl{2} \gnl
\gcl{1} \gnl
\gob{1}{H} \gob{3}{\F_l(M\ot N)}
\gend}\hspace{-0,14cm}=\hspace{-0,24cm}
\scalebox{0.9}[0.9]{
\gbeg{6}{8}
\gvac{3} \got{2}{M} \got{1}{\hspace{-0,14cm}N} \gnl
\gwdb{3} \gcn{1}{3}{2}{2} \gcn{1}{4}{2}{2} \gnl
\gcl{5} \gvac{1} \gmp{+} \gnl
\gvac{2} \hspace{-0,34cm} \gcmu \gnl
\gvac{2} \gcn{1}{1}{1}{1} \gbr \gnl
\gvac{2} \glm \glm \gnl
\gvac{3} \gcl{1} \gvac{1} \gcl{1} \gnl
\gob{2}{H} \gob{3}{M} \gob{1}{N} \gend} =
\scalebox{0.9}[0.9]{
\gbeg{6}{8}
\gvac{3} \got{2}{M} \got{1}{\hspace{-0,14cm}N} \gnl
\gwdb{3} \gcn{1}{4}{2}{2} \gcn{1}{5}{2}{2} \gnl
\gcl{5} \gvac{1} \hspace{-0,34cm} \gcmu \gnl
\gvac{2} \gbr \gnl
\gvac{2} \gmp{+} \gmp{+} \gnl
\gvac{2} \gcn{1}{1}{1}{1} \gbr \gnl
\gvac{2} \glm \glm \gnl
\gob{2}{H} \gob{3}{M} \gob{1}{N} \gend}
\stackrel{\equref{YDDH-monoidal}}{=} \hspace{-0,24cm}
\scalebox{0.84}[0.84]{
\gbeg{7}{8}
\gvac{4} \got{1}{M} \got{1}{N} \gnl
\gdb \gdb \gcl{4} \gcl{5} \gnl
\gcl{1} \gibr \gcl{1} \gnl
\gbr \gbr \gnl
\gmu \gmp{+} \gmp{+} \gnl
\gcn{1}{2}{2}{2} \gvac{1} \gcl{1} \gbr \gnl
\gvac{2} \glm \glm \gnl
\gob{2}{H} \gob{3}{M} \gob{1}{N} \gend}
\stackrel{\Phi_{H, H^*}}{=} \hspace{-0,24cm}
\scalebox{0.84}[0.84]{
\gbeg{6}{8}
\gvac{4} \got{1}{M} \got{1}{N} \gnl
\gwdb{4} \gcl{3} \gcl{4} \gnl
\gcl{1} \gdb \gcl{1} \gnl
\gbr \gmp{+} \gmp{+}\gnl
\gmu \gcl{1} \gbr \gnl
\gcn{1}{2}{2}{2} \gvac{1} \glm \glm \gnl
\gvac{3} \gcl{1} \gvac{1} \gcl{1} \gnl
\gob{2}{H} \gob{3}{M} \gob{1}{N} \gend}
$$

$$
\stackrel{nat.}{=} \hspace{-0,4cm}
\scalebox{0.9}[0.9]{
\gbeg{6}{7}
\gvac{2} \got{1}{M} \gvac{2} \got{1}{N} \gnl
\gdb \gcl{1} \gdb \gcl{1} \gnl
\gcl{3} \gmp{+} \gcl{1} \gcl{1} \gmp{+} \gcl{1} \gnl
\gvac{1} \glm \gcl{1} \glm \gnl
\gvac{2} \gibr \gvac{1} \gcl{1} \gnl
\gwmu{3} \gcl{1} \gvac{1} \gcl{1} \gnl
\gvac{1} \gob{1}{H} \gvac{1} \gob{1}{M} \gvac{1} \gob{1}{N}
\gend}\stackrel{\Phi_{H, M}}{=} \hspace{-0,4cm}
\scalebox{0.9}[0.9]{
\gbeg{6}{7}
\gvac{2} \got{1}{M} \gvac{2} \got{1}{N} \gnl
\gdb \gcl{1} \gdb \gcl{1} \gnl
\gcl{3} \gmp{+} \gcl{1} \gcl{1} \gmp{+} \gcl{1} \gnl
\gvac{1} \glm \gcl{1} \glm \gnl
\gvac{2} \gbr \gvac{1} \gcl{1} \gnl
\gwmu{3} \gcl{1} \gvac{1} \gcl{1} \gnl
\gvac{1} \gob{1}{H} \gvac{1} \gob{1}{M} \gvac{1} \gob{1}{N}
\gend}= 
\scalebox{0.9}[0.9]{
\gbeg{5}{7}
\got{2}{\F_l(M)} \gvac{1} \got{1}{\F_l(N)} \gnl
\gvac{1} \gcl{1} \gvac{1} \gcl{1} \gnl
\glcm \glcm \gnl
\gcl{1} \gbr \gcl{3} \gnl
\gmu \gcl{2} \gnl
\gcn{1}{1}{2}{2} \gnl
\gob{2}{H} \gob{1}{\F_l(M)} \gob{2}{\hspace{0,34cm}\F_l(N).}
\gend}
$$

\vspace{0,2cm}

It is easily shown that the functor $\Ll: {}_H ^H\YD(\C)\to \YD(\C)^H_H$
\label{funktor L} given by
$$
\scalebox{0.9}[0.9]{
\gbeg{3}{5}
\got{1}{\hspace{0,1cm}\Ll(M)} \got{3}{H} \gnl  
\gcl{1} \gvac{1} \gcl{1} \gnl
\gcl{1} \gcn{1}{1}{3}{1} \gnl
\grm \gnl
\gob{1}{\Ll(M)}
\gend} = \scalebox{0.9}[0.9]{
\gbeg{2}{6}
\got{1}{M} \got{1}{H} \gnl
\gibr \gnl
\gmp{+} \gcl{2} \gnl
\gmp{+} \gnl
\glm \gnl
\gob{3}{M}
\gend}\qquad\textnormal{and}\quad
\scalebox{0.9}[0.9]{
\gbeg{2}{5}
\got{1}{\Ll(M)} \gnl
\grcm \gnl
\gcl{1} \gcn{1}{1}{1}{3} \gnl
\gcl{1} \gvac{1} \gcl{1} \gnl
\gob{1}{\Ll(M)} \gob{3}{H}
\gend} = \scalebox{0.9}[0.9]{
\gbeg{3}{6}
\got{1}{} \got{1}{M} \gnl
\glcm \gnl
\gmp{-} \gcl{2} \gnl
\gmp{-} \gnl
\gbr \gnl
\gob{1}{M} \gob{1}{H}
\gend}
$$
for $M,N\in {}_H ^H\YD(\C)$ is an isomorphism of categories. It is even monoidal if $\Phi_{H,M}$ is
symmetric for every $M\in {}_H ^H\YD(\C)$. We show only the compatibility of the $H$-module structures
on the tensor products:
\vspace{-0,2cm}
$$\hspace{0,2cm}
\scalebox{0.9}[0.9]{
\gbeg{3}{6}
\got{1}{\Ll(M\ot N)} \got{3}{H} \gnl
\gcl{2} \gvac{1} \gcl{1} \gnl
\gvac{1} \gcn{1}{1}{3}{1} \gnl
\grm \gnl
\gcl{1} \gnl
\gob{1}{\Ll(M\ot N)}
\gend}= \hspace{-0,4cm}
\scalebox{0.9}[0.9]{
\gbeg{5}{10}
\gvac{1} \got{1}{M} \got{1}{N} \got{1}{H} \gnl
\gvac{1} \gcl{1} \gibr \gnl
\gvac{1} \gibr \gcl{7} \gnl
\gvac{1} \gmp{+} \gcl{4} \gnl
\gvac{1} \gmp{+} \gnl
\gcn{1}{1}{3}{2} \gnl
\gcmu \gnl
\gcl{1} \gbr \gnl
\glm \glm \gnl
\gvac{1} \gob{1}{M} \gob{3}{N}
\gend} \stackrel{\Phi_{H,H}}{=}
\scalebox{0.9}[0.9]{
\gbeg{5}{10}
\gvac{1} \got{1}{M} \got{1}{N} \got{1}{H} \gnl
\gvac{1} \gcl{1} \gibr \gnl
\gvac{1} \gibr \gcl{7} \gnl
\gcn{1}{1}{3}{2} \gvac{1} \gcl{4} \gnl
\gcmu \gnl
\gmp{+} \gmp{+} \gnl
\gmp{+} \gmp{+} \gnl
\gcl{1} \gbr \gnl
\glm \glm \gnl
\gvac{1} \gob{1}{M} \gob{3}{N}
\gend} =
\scalebox{0.9}[0.9]{
\gbeg{5}{9}
\got{1}{M} \got{1}{N} \got{2}{H} \gnl
\gcl{2} \gcl{1} \gcmu \gnl
\gvac{1} \gibr \gcl{1} \gnl
\gibr \gibr \gnl
\gmp{+} \gcl{1} \gmp{+} \gcl{1} \gnl
\gmp{+} \gcl{1} \gmp{+} \gcl{1} \gnl
\glm \glm \gnl
\gvac{1} \gcl{1} \gvac{1} \gcl{1} \gnl
\gvac{1} \gob{1}{M} \gob{3}{N}
\gend} =
\scalebox{0.9}[0.9]{
\gbeg{5}{7}
\got{1}{\Ll(M)} \gvac{1} \got{1}{\Ll(N)} \got{2}{H} \gnl
\gcn{1}{1}{1}{3} \gvac{1} \gcl{1} \gcmu \gnl
\gvac{1} \gcl{1} \gibr \gcl{1} \gnl
\gvac{1} \grm \grm \gnl
\gvac{1} \gcl{2} \gvac{1} \gcl{2} \gnl
\gvac{1} \gob{1}{\Ll(M)} \gob{3}{\Ll(N)}
\gend} \stackrel{\Phi_{H,N}}{=}
\scalebox{0.9}[0.9]{
\gbeg{5}{7}
\got{1}{\Ll(M)\ot \Ll(N)} \gvac{2} \got{1}{H} \gnl
\gcl{1} \gvac{1} \gcn{1}{1}{3}{1} \gnl
\gcl{1} \gcn{1}{1}{3}{1} \gnl
\grm \gnl
\gcl{2} \gnl
\gob{1}{\Ll(M)\ot \Ll(N).}
\gend}
$$
However, $\Ll$ does not respect the braidings.

\subsection{Left-right and right-left YD-modules} \sslabel{mixing}

In \seref{YD-mods} we studied the categories of left-right YD-modules ${}_H\YD(\C)^{H^{op}}$
and ${}_{H^{cop}}\YD(\C)^H$, \rmref{lr-Hcop-cat}. Symmetrically, we may consider the category
${}^H\YD(\C)_H$ of right-left YD-modules. These are right $H$-modules and left $H$-comodules
which satisfy the compatibility condition \equref{YD-other-mix}. If $\Phi_{H,H}$ is symmetric,
this condition is equivalent to \equref{YD-S-other}. \vspace{-0,9cm}
\begin{center}
\begin{tabular}{p{6.4cm}p{0cm}p{8.4cm}}
\begin{eqnarray} \eqlabel{YD-other-mix}
\gbeg{3}{9}
\got{1}{M} \got{2}{H} \gnl
\gcl{1} \gcmu \gnl
\grm \gcn{1}{2}{1}{-1} \gnl
\gcl{1} \gnl
\gbr \gnl
\gcl{1} \gcn{1}{1}{1}{3} \gnl
\gcn{1}{1}{1}{1} \glcm \gnl
\gmu \gcl{1} \gnl
\gob{2}{H} \gob{1}{M}
\gend=
\gbeg{3}{7}
\gvac{1} \got{1}{M} \got{2}{H} \gnl
\gvac{1} \gcl{1} \gcn{1}{1}{2}{2} \gnl
\glcm \gcmu \gnl
\gcl{1} \gbr \gcl{1} \gnl
\gmu \grm \gnl
\gcn{1}{1}{2}{2} \gvac{1} \gcl{1} \gnl
\gob{2}{H} \gob{1}{M}
\gend
\end{eqnarray}
& &
\begin{eqnarray} \eqlabel{YD-S-other}
\gbeg{7}{9}
\gvac{3} \got{1}{M} \got{5}{H} \gnl
\gvac{3} \gcl{2} \gcn{2}{2}{5}{1}\gnl \gnl
\gvac{3} \grm \gnl
\gvac{3} \gcl{1} \gnl
\gvac{2} \glcm \gnl
\gcn{2}{2}{5}{1} \gvac{1} \gcl{2} \gnl
\gob{1}{H} \gob{5}{M}
\gend=
\gbeg{6}{10}
\gvac{1} \got{1}{M} \got{4}{H} \gnl
\gvac{1} \gcl{2} \gvac{1} \gcmu \gnl
\gvac{1} \gcl{1} \gcn{1}{1}{3}{2} \gvac{1} \gcl{1} \gnl
\glcm \gcmu \gmp{-} \gnl
\gcl{1} \gbr \gcl{1} \gcl{1} \gnl
\gmu \grm \gcn{1}{1}{1}{-1} \gnl
\gcn{1}{1}{2}{3} \gvac{1} \gbr \gnl
\gvac{1} \gbr \gcl{2} \gnl
\gvac{1} \gmu \gnl
\gvac{1} \gob{2}{H} \gob{1}{M}
\gend
\end{eqnarray}
\end{tabular}
\end{center}
The category ${}^H\YD(\C)_{H^{cop}}$ is monoidal if $\Phi_{H,N}$ is symmetric
for all $N\in {}^H\YD(\C)_{H^{cop}}$. This is a braided monoidal category with braiding:
$$
\Phi^{3+}_{M, N}=
\gbeg{4}{6}
\got{1}{M} \got{3}{N} \gnl
\gcn{1}{1}{1}{1} \glcm \gnl
\grm \gcn{1}{1}{1}{1} \gnl
\gcn{1}{1}{1}{2} \gcn{1}{1}{3}{2} \gnl
\gvac{1} \hspace{-0,34cm} \gbr \gnl
\gvac{1} \gob{1}{N} \gob{1}{M}
\gend
$$
for $M,N\in {}^H\YD(\C)_{H^{cop}}$. Another possibility for the braiding is $\Phi^{3-}$
(similarly as in \rmref{Phi-mix-changed}). Using the fact that $\Phi_{H,H}$ is
symmetric, one may show that the functor $\A:{}^H\YD(\C)_{H^{cop}}\to {}_H\YD(\C)^{H^{op}}$
\label{funktor A} given by:
$$
\scalebox{0.9}[0.9]{
\gbeg{2}{4}
\got{1}{\hspace{-0,24cm}H} \got{1}{\hspace{0,1cm}\A(M)} \gnl
\glm \gnl
\gvac{1} \gcl{1} \gnl
\gvac{1} \gob{1}{\A(M)}
\gend} = \scalebox{0.9}[0.9]{
\gbeg{2}{5}
\got{1}{H} \got{1}{M} \gnl
\gibr \gnl
\gcl{1} \gmp{+} \gnl
\grm \gnl
\gob{1}{M}
\gend}\qquad\textnormal{and}\quad
\scalebox{0.9}[0.9]{
\gbeg{2}{4}
\got{1}{\hspace{0,3cm}\A(M)} \gnl
\grcm \gnl
\gcl{1} \gcl{1} \gnl
\gob{1}{\A(M)} \gob{1}{\hspace{0,14cm}H}
\gend} = \scalebox{0.9}[0.9]{
\gbeg{3}{5}
\got{1}{} \got{1}{M} \gnl
\glcm \gnl\gmp{-} \gcl{1} \gnl
\gbr \gnl
\gob{1}{M} \gob{1}{H}
\gend}
$$
for $M,N\in {}^H\YD(\C)_{H^{cop}}$ is an isomorphism of 
categories. We show that it is monoidal. For the right $H$-comodule structures we have:
$$\scalebox{0.9}[0.9]{
\gbeg{3}{6}
\got{1}{\A(M\ot N)} \gnl
\gcl{1} \gnl
\grcm \gnl
\gcl{2} \gcn{1}{1}{1}{3} \gnl
\gvac{2} \gcl{1} \gnl
\gob{1}{\A(M\ot N)} \gob{3}{H}
\gend}=\scalebox{0.9}[0.9]{
\gbeg{2}{5}
\got{1}{} \got{1}{M\ot N} \gnl
\glcm \gnl\gmp{-} \gcl{1} \gnl
\gbr \gnl
\gob{1}{M\ot N} \gob{1}{\hspace{0,24cm}H}
\gend}=\scalebox{0.9}[0.9]{
\gbeg{4}{9}
\gvac{1} \got{1}{M} \gvac{1} \got{1}{N} \gnl
\gvac{1} \gcl{1} \gvac{1} \gcl{1} \gnl
\glcm \glcm \gnl
\gcl{1} \gbr \gcl{2} \gnl
\gmu \gcl{1} \gnl
\gvac{1} \hspace{-0,34cm} \gmp{-} \gcn{1}{1}{2}{1} \gcn{1}{2}{2}{1} \gnl
\gvac{1} \gbr \gnl
\gvac{1} \gcl{1} \gbr \gnl
\gvac{1} \gob{1}{M} \gob{1}{N} \gob{1}{H}
\gend}=\scalebox{0.9}[0.9]{
\gbeg{4}{10}
\gvac{1} \got{1}{M} \gvac{1} \got{1}{N} \gnl
\gvac{1} \gcl{1} \gvac{1} \gcl{1} \gnl
\glcm \glcm \gnl
\gcl{1} \gbr \gcl{3} \gnl
\gmp{-} \gmp{-} \gcl{2} \gnl
\gibr \gnl
\gmu \gcn{1}{1}{1}{0} \gcn{1}{2}{1}{0} \gnl
\gvac{1} \hspace{-0,34cm} \gbr \gnl
\gvac{1} \gcl{1} \gbr \gnl
\gvac{1} \gob{1}{M} \gob{1}{N} \gob{1}{H}
\gend}=\scalebox{0.9}[0.9]{
\gbeg{4}{11}
\gvac{1} \got{1}{M} \gvac{1} \got{1}{N} \gnl
\gvac{1} \gcl{1} \gvac{1} \gcl{1} \gnl
\glcm \glcm \gnl
\gmp{-} \gcl{1} \gmp{-} \gcl{3} \gnl
\gcl{1} \gbr \gnl
\gcl{1} \gbr \gnl
\gbr \gbr \gnl
\gcl{1} \gbr \gcl{1} \gnl
\gcl{2} \gcl{2} \gibr \gnl
\gvac{2} \gmu \gnl
\gob{1}{M} \gob{1}{N} \gob{2}{H}
\gend}\stackrel{\Phi_{H,M}}{\stackrel{\Phi_{H,H}}{=}}
\scalebox{0.9}[0.9]{
\gbeg{4}{9}
\gvac{1} \got{1}{M} \gvac{1} \got{1}{N} \gnl
\gvac{1} \gcl{1} \gvac{1} \gcl{1} \gnl
\glcm \glcm \gnl
\gmp{-} \gcl{1} \gmp{-} \gcl{1} \gnl
\gbr \gbr \gnl
\gcl{1} \gbr \gcl{1} \gnl
\gcl{2} \gcl{2} \gbr \gnl
\gvac{2} \gmu \gnl
\gob{1}{M} \gob{1}{N} \gob{2}{H}
\gend}=\scalebox{0.9}[0.9]{
\gbeg{5}{7}
\got{1}{\A(M)} \gvac{1} \got{1}{\A(N)} \gnl
\gcl{1} \gvac{1} \gcl{1} \gnl
\grcm \grcm \gnl
\gcl{3} \gbr \gcl{1} \gnl
\gvac{1} \gcl{2} \gbr \gnl
\gvac{2} \gmu \gnl
\gob{1}{\hspace{-0,34cm}\A(M)} \gob{2}{\A(N)} \gob{1}{H.}
\gend}
$$
For the left $H$-module structures we find:
$$
\gbeg{5}{6}
\got{1}{H} \got{5}{\A(M\ot N)} \gnl
\gcn{2}{2}{1}{5} \gvac{1} \gcl{2} \gnl \gnl
\gvac{2} \glm \gnl
\gvac{3} \gcl{1} \gnl
\gvac{2} \gob{3}{\A(M\ot N)}
\gend=\scalebox{0.9}[0.9]{
\gbeg{2}{5}
\got{1}{H} \got{2}{M\ot N} \gnl
\gibr \gnl
\gcl{1} \gmp{+} \gnl
\grm \gnl
\gob{1}{M\ot N}
\gend}=\scalebox{0.9}[0.9]{
\gbeg{5}{10}
\got{1}{H} \got{1}{M} \got{1}{N} \gnl
\gibr \gcl{1} \gnl
\gcl{3} \gibr \gnl
\gvac{1} \gcl{2} \gmp{+} \gnl
\gvac{2} \gcn{1}{1}{1}{2} \gnl
\gcl{4} \gcl{2} \gcmu \gnl
\gvac{2} \gibr \gnl
\gvac{1} \gbr \gcl{1} \gnl
\grm \grm \gnl
\gob{1}{M} \gob{3}{N}
\gend} =
\scalebox{0.9}[0.9]{
\gbeg{5}{9}
\got{1}{H} \got{1}{M} \got{1}{N} \gnl
\gibr \gcl{1} \gnl
\gcl{5} \gibr \gnl
\gvac{1} \gcl{3} \gcn{1}{1}{1}{2} \gnl
\gvac{2} \gcmu \gnl
\gvac{2} \gmp{+} \gmp{+} \gnl
\gvac{1} \gbr \gcl{1} \gnl
\grm \grm \gnl
\gob{1}{M} \gob{3}{N}
\gend}\stackrel{nat.}{=}
\scalebox{0.9}[0.9]{
\gbeg{5}{8}
\got{2}{H} \got{1}{M} \got{1}{N} \gnl
\gcmu \gcl{1} \gcl{2} \gnl
\gcl{1} \gibr \gnl
\gibr \gibr \gnl
\gcl{1} \gmp{+} \gcl{1} \gmp{+} \gnl
\grm \grm \gnl
\gcl{1} \gvac{1} \gcl{1} \gnl
\gob{1}{M} \gob{3}{N}
\gend} \stackrel{\Phi_{H,M}}{=}
\scalebox{0.9}[0.9]{
\gbeg{5}{8}
\got{2}{\hspace{-0,12cm}H} \got{1}{\hspace{-0,3cm}\A(M)} \got{2}{\A(N)} \gnl
\gcmu \gcl{1} \gcl{2} \gnl
\gcl{1} \gbr \gnl
\glm \glm \gnl
\gvac{1} \gcl{3} \gvac{1} \gcl{3} \gnl
\gvac{1} \gob{1}{\A(M)} \gvac{1} \gob{1}{\A(N).}
\gend}
$$

\par\medskip

Analogously to the two versions of left-right YD-categories, we have two versions of righ-left
YD-categories, where the second one is: ${}^{H^{op}}\YD(\C)_H$. It is monoidal if $\Phi_{H,M}$ is
symmetric for all $M\in {}^{H^{op}}\YD(\C)_H$. This is a braided monoidal category with braiding:
$$
\Phi^{4+}_{M, N}=
\gbeg{4}{6}
\gvac{1} \got{1}{M} \got{1}{N} \gnl
\gvac{1} \gbr \gnl
\gcn{1}{1}{3}{2} \gcn{1}{1}{3}{4} \gnl
\gcn{1}{1}{2}{2} \gvac{1} \hspace{-0,34cm} \glcm \gnl
\gvac{1} \grm \gcl{1} \gnl
\gob{3}{N} \gob{1}{M}
\gend
$$
for $M,N\in {}^{H^{op}}\YD(\C)_H$. As in \rmref{lr-Hcop-cat} we have that $\Phi^{3\pm}$ is not
a braiding for ${}^{H^{op}}\YD(\C)_H$.

\vspace{0,2cm}


Let us next examine the relation between the categories ${}_H\YD(\C)^{H^{op}}$ and
${}_{H^{cop}}\YD(\C)^H$, on the one hand, and ${}^{H^{op}}\YD(\C)_H$ and ${}^H\YD(\C)_{H^{cop}}$,
on the other hand. First of all recall that the corresponding identity functors are not isomorphisms
of braided monoidal categories (\rmref{lr-Hcop-cat}). Take $M\in {}_H\YD(\C)^H$. The object
$\ch(M)=M$ with structures:
$$
\scalebox{0.9}[0.9]{
\gbeg{2}{4}
\got{1}{\ch(M)} \gnl
\grcm \gnl
\gcl{1} \gcl{1} \gnl
\gob{1}{\ch(M)} \gob{1}{\hspace{0,24cm}H}
\gend} = \scalebox{0.9}[0.9]{
\gbeg{2}{4}
\got{1}{M} \gnl
\grcm \gnl
\gcl{1} \gmp{-} \gnl
\gob{1}{M} \gob{1}{H}
\gend}\quad\textnormal{and}\quad
\scalebox{0.9}[0.9]{
\gbeg{3}{4}
\got{1}{\hspace{-0,24cm}H} \got{1}{\hspace{0,24cm}\ch(M)} \gnl
\glm \gnl
\gvac{1} \gcl{1} \gnl
\gob{3}{\ch(M)}
\gend} = \scalebox{0.9}[0.9]{
\gbeg{3}{4}
\got{1}{H} \got{1}{M} \gnl
\gmp{+} \gcl{1} \gnl
\glm \gnl
\gvac{1} \gob{1}{M}
\gend}
$$
is a right $H^{cop}$-comodule and a left $H^{op}$-module. This defines a (bijective) functor
$\ch:{}_H\YD(\C)^H \to {}_{H^{op, cop}}\YD(\C)^{H^{op, cop}}$ 
(the objects of ${}_{H^{op, cop}}\YD(\C)^{H^{op, cop}}$ are left-right YD-modules over the Hopf
algebra $H^{op, cop}$). Indeed,
$$\gbeg{5}{8}
\got{2}{H} \got{1}{M} \gnl
\gcmu \gcl{1} \gnl
\gibr \grcm \gnl
\gmp{+} \gbr \gmp{-} \gnl
\glm \gbr \gnl
\gvac{1} \gcl{2} \gmu \gnl
\gvac{3} \hspace{-0,2cm} \gmp{+} \gnl
\gvac{1} \gob{2}{M} \gob{1}{H}
\gend=
\gbeg{5}{8}
\got{2}{H} \got{1}{M} \gnl
\gcmu \gcl{1} \gnl
\gibr \grcm \gnl
\gmp{+} \gmp{+} \gcl{1} \gmp{-} \gnl
\gcl{1} \gbr \gmp{+} \gnl
\glm \gmu \gnl
\gvac{1} \gcl{1} \gcn{1}{1}{2}{2} \gnl
\gvac{1} \gob{1}{M} \gob{2}{H}
\gend=
\gbeg{4}{7}
\got{1}{H} \got{2}{M} \gnl
\gmp{+} \gcn{1}{1}{2}{2} \gnl
\hspace{-0,34cm} \gcmu \grcm \gnl
\gcl{1} \gbr \gcl{1} \gnl
\glm \gmu \gnl
\gcn{1}{1}{3}{3} \gcn{1}{1}{4}{4} \gnl
\gob{3}{M} \gob{1}{\hspace{-0,26cm}H}
\gend\stackrel{M\in {}_H\YD(\C)^H}{=}
\gbeg{3}{9}
\got{1}{H} \got{2}{M} \gnl
\gmp{+} \gvac{1} \hspace{-0,34cm} \gcl{2} \gnl
\gcmu \gnl
\gcn{1}{1}{1}{3} \glm \gnl
\gvac{1} \gibr \gnl
\gcn{1}{1}{3}{1} \gvac{1} \gcl{2} \gnl
\grcm \gnl
\gcl{1} \gmu \gnl
\gob{1}{M} \gob{2}{H}
\gend=
\gbeg{3}{10}
\got{2}{H} \got{1}{M} \gnl
\gcmu \gcl{3} \gnl
\gibr \gnl
\gmp{+} \gmp{+} \gnl
\gcn{1}{1}{1}{3} \glm \gnl
\gvac{1} \gibr \gnl
\gcn{1}{1}{3}{1} \gvac{1} \gcl{2} \gnl
\grcm \gnl
\gcl{1} \gmu \gnl
\gob{1}{M} \gob{2}{H}
\gend=
\gbeg{4}{12}
\got{2}{H} \got{1}{M} \gnl
\gcmu \gcl{3} \gnl
\gibr \gnl
\gcl{1} \gmp{+} \gnl
\gcn{1}{1}{1}{3} \glm \gnl
\gvac{1} \gibr \gnl
\gcn{1}{1}{3}{1} \gvac{1} \gcl{3} \gnl
\grcm \gnl
\gcl{1} \gmp{-} \gnl
\gcl{1} \gmp{+} \gmp{+} \gnl
\gcl{1} \gmu \gnl
\gob{1}{M} \gob{2}{H}
\gend=
\gbeg{4}{13}
\got{2}{H} \got{1}{M} \gnl
\gcmu \gcl{3} \gnl
\gibr \gnl
\gcl{1} \gmp{+} \gnl
\gcn{1}{1}{1}{3} \glm \gnl
\gvac{1} \gibr \gnl
\gcn{1}{1}{3}{1} \gvac{1} \gcl{3} \gnl
\grcm \gnl
\gcl{1} \gmp{-} \gnl
\gcl{1} \gbr \gnl
\gcl{1} \gmu \gnl
\gcl{1} \gvac{1} \hspace{-0,34cm} \gmp{+} \gnl
\gob{2}{M} \gob{1}{H}
\gend
$$
is equivalent to
$$
\gbeg{4}{7}
\got{2}{H} \got{1}{\ch(M)} \gnl
\gcmu \gcl{1} \gnl
\gibr \grcm \gnl
\gcl{1} \gbr \gcl{1} \gnl
\glm \gbr \gnl
\gvac{1} \gcl{1} \gmu \gnl
\gvac{1} \gob{1}{\ch(M)} \gob{2}{H}
\gend=
\gbeg{5}{9}
\got{2}{H} \got{1}{M} \gnl
\gcn{1}{1}{2}{2} \gvac{1} \gcl{2} \gnl
\gcmu \gnl
\gibr \grcm \gnl
\gmp{+} \gbr \gmp{-} \gnl
\glm \gbr \gnl
\gvac{1} \gcl{2} \gmu \gnl
\gvac{2} \gcn{1}{1}{2}{2} \gnl
\gvac{1} \gob{1}{M} \gob{2}{H}
\gend=
\gbeg{4}{12}
\got{2}{H} \got{1}{M} \gnl
\gcmu \gcl{3} \gnl
\gibr \gnl
\gcl{1} \gmp{+} \gnl
\gcn{1}{1}{1}{3} \glm \gnl
\gvac{1} \gibr \gnl
\gcn{1}{1}{3}{1} \gvac{1} \gcl{3} \gnl
\grcm \gnl
\gcl{1} \gmp{-} \gnl
\gcl{1} \gbr \gnl
\gcl{1} \gmu \gnl
\gob{1}{M} \gob{2}{H}
\gend=
\gbeg{4}{10}
\got{2}{H} \got{1}{\ch(M)} \gnl
\gcmu \gcl{3} \gnl
\gibr \gnl
\gcn{1}{1}{1}{3} \glm \gnl
\gvac{1} \gibr \gnl
\gcn{1}{1}{3}{1} \gvac{1} \gcl{2} \gnl
\grcm \gnl
\gcl{1} \gbr \gnl
\gcl{1} \gmu \gnl
\gob{1}{\ch(M)} \gob{2}{H.}
\gend
$$
The functor $\ch$ restricts to monoidal functors $\ch_1: {}_H\C\to {}_{H^{cop}}\C$ and
$\ch_2: \C^H\to \C^{H^{op}}$. (For $M,M'\in {}_H\YD(\C)^H$, the module structures of $\ch_i(M\ot M')$
and $\ch_i(M)\ot\ch_i(M')$, for $i=1$, are compatible since the antipode is a coalgebra anti-morphism
and since $\Phi_{H,H}$ is symmetric, while the corresponding comodule structures for $i=2$ are
compatible since the antipode of $H$ is an algebra anti-morphism.)
Hence $\ch$ induces a monoidal functor 
$\ch'$ from ${}_H\YD(\C)^{H^{op}}$ to the
category $\D$ with objects in ${}_{H^{op, cop}}\YD(\C)^{H^{op, cop}}$, whose
monoidal structure and the braiding are like the ones in ${}_{H^{cop}}\YD(\C)^H$. The category $\D$ is
shown to be indeed a braided monoidal category, 
however the functor $\ch'$ does not respect the braidings. It is easily seen that $\ch(\Phi^{1+})=\Phi^{1+}
\not=\Phi^{2+}$. Thus we will not consider that $\ch'$ induces a braided monoidal functor
${}_H\YD(\C)^{H^{op}} \to {}_{H^{cop}}\YD(\C)^H$.

In \cite[Lemma 3.5.4]{Besp} it is proved that ${}_H\YD(\C)^{H^{op}} \stackrel{(\Id, \Omega)}{\to}({}_{H^{cop}}\YD(\C)^H)^{cop}$ is an isomorphism of braided monoidal categories, where
$(\Id, \Omega)$ is the extension of the braided monoidal isomorphism functor $\C\to\C^{cop}$.
As announced in the introduction
of our paper, we do not make this kind of identifications, we stick to the original category $\C$.

\vspace{0,2cm}

Similarly, there is a functor $\B:{}^H\YD(\C)_H \to {}^{H^{op,cop}}\YD(\C)_{H^{op,cop}}$ defined via:
$$
\scalebox{0.9}[0.9]{
\gbeg{2}{4}
\got{3}{\B(M)} \gnl
\glcm \gnl
\gcl{1} \gcl{1} \gnl
\gob{1}{H} \gob{1}{\hspace{0,24cm}\B(M)}
\gend} = \scalebox{0.9}[0.9]{
\gbeg{2}{4}
\got{3}{M} \gnl
\glcm \gnl
\gmp{-} \gcl{1} \gnl
\gob{1}{H} \gob{1}{M}
\gend}\quad\textnormal{and}\quad
\scalebox{0.9}[0.9]{
\gbeg{3}{4}
\got{1}{\B(M)} \got{1}{\hspace{0,24cm}H} \gnl
\grm \gnl
\gcl{1} \gnl
\gob{1}{\B(M)}
\gend} = \scalebox{0.9}[0.9]{
\gbeg{3}{4}
\got{1}{M} \got{1}{H} \gnl
\gcl{1} \gmp{+} \gnl
\grm \gnl
\gob{1}{M}
\gend}
$$
for $M\in {}^H\YD(\C)_{H^{cop}}$. It induces monoidal functors $\B_1:{}^H\C \to {}^{H^{op}}\C$
and $\B_2:\C_H \to \C_{H^{cop}}$, 
but not a braided monoidal functor ${}^H\YD(\C)_{H^{cop}} \to {}^{H^{op}}\YD(\C)_H$.


\subsection{Comparing all the categories} \selabel{compare all}

To sum up the results of this section consider the following diagram:
\begin{equation}\eqlabel{first4categories}
\scalebox{0.84}{
\bfig
\putmorphism(-300,200)(1,0)[{}_H\YD(\C)^{H^{op}}`{}^H\YD(\C)_{H^{cop}}`\A^{-1}]{1140}1b
\putmorphism(880,885)(-2,-1)[``]{1310}1l
\putmorphism(880,910)(-2,-1)[``\F_1]{1310}0l
\putmorphism(820,880)(0,-1)[``\F_2]{680}1r
\putmorphism(-430,880)(1,0)[{}_{D(H)}\C`{}_H ^H\YD(\C)`\F_l]{1220}1a
\putmorphism(-380,880)(0,-1)[``\F]{660}1l
\put(-210,630){\fbox{1}}
\put(510,430){\fbox{2}}
\efig}
\end{equation}
We define the functors $\F_1$ and $\F_2$ so that the triangles $\langle 1\rangle$ and
$\langle 2\rangle$ commute. We write out the functor $\F_1$ explicitly:
$$
\scalebox{0.9}[0.9]{
\gbeg{2}{4}
\got{1}{\hspace{0,3cm}\F_1(M)} \gnl
\grcm \gnl
\gcl{1} \gcl{1} \gnl
\gob{1}{\hspace{-0,16cm}\F_1(M)} \gob{1}{\hspace{0,14cm}H}
\gend} = \scalebox{0.9}[0.9]{
\gbeg{3}{5}
\got{1}{} \got{1}{M} \gnl
\glcm \gnl\gmp{-} \gcl{1} \gnl
\gbr \gnl
\gob{1}{M} \gob{1}{H}
\gend}\quad\textnormal{with inverse}\quad
\scalebox{0.9}[0.9]{
\gbeg{2}{4}
\got{1}{\hspace{0,3cm}\F_1^{-1}(N)} \gnl
\glcm \gnl
\gcl{1} \gcl{1} \gnl
\gob{1}{\hspace{-0,24cm}\F_1^{-1}(N)} \gob{1}{\hspace{0,24cm}H}
\gend} = \scalebox{0.9}[0.9]{
\gbeg{3}{5}
\got{1}{N} \gnl
\grcm \gnl
\gcl{1} \gmp{+} \gnl
\gibr \gnl
\gob{1}{H} \gob{1}{N.}
\gend}
$$
We saw that the functors $\F_l, \F$ and $\A$ are monoidal isomorphisms, so we have four mutually
isomorphic monoidal categories.
We now compare their braidings. We have: 
$$\Phi_{M,N}^L=
\scalebox{0.9}[0.9]{\gbeg{3}{5}
\got{1}{} \got{1}{M} \got{1}{N} \gnl
\glcm \gcl{1} \gnl
\gcl{1} \gbr \gnl
\glm \gcl{1} \gnl
\gob{1}{} \gob{1}{N} \gob{1}{M}
\gend}\stackrel{\F_1^{-1}}{=}
\scalebox{0.9}[0.9]{\gbeg{3}{7}
\got{1}{M} \got{1}{} \got{1}{N} \gnl
\grcm \gcl{3} \gnl
\gcl{1} \gmp{+} \gnl
\gibr \gnl
\gcl{1} \gbr \gnl
\glm \gcl{1} \gnl
\gob{1}{} \gob{1}{N} \gob{1}{M}
\gend}\stackrel{\Phi_{H,M}}{\stackrel{nat.}{=}}
\scalebox{0.9}[0.9]{\gbeg{4}{7}
\got{2}{M} \got{2}{N} \gnl
\gvac{1} \hspace{-0,34cm} \grcm \gcn{1}{1}{1}{1} \gnl
\gvac{1} \gcl{1} \gmp{+} \gcl{1} \gnl
\gcn{1}{1}{3}{3} \gvac{1} \glm \gnl
\gcn{1}{1}{3}{4} \gcn{1}{1}{5}{4} \gnl
\gvac{2} \hspace{-0,34cm} \gbr \gnl
\gvac{2} \gob{1}{N} \gob{1}{M}
\gend}=(\Phi^{1-}_{N, M})^{-1}
$$
and
$$
\Phi^{3+}_{M, N}=
\gbeg{4}{6}
\got{1}{M} \got{3}{N} \gnl
\gcn{1}{1}{1}{1} \glcm \gnl
\grm \gcn{1}{1}{1}{1} \gnl
\gcn{1}{1}{1}{2} \gcn{1}{1}{3}{2} \gnl
\gvac{1} \hspace{-0,34cm} \gbr \gnl
\gob{3}{N} \gob{1}{\hspace{-0,6cm}M}
\gend\stackrel{\A^{-1}}{=}
\gbeg{4}{9}
\got{1}{M} \got{1}{N} \gnl
\gcl{4} \grcm \gnl
\gvac{1} \gcl{1} \gmp{+} \gnl
\gvac{1} \gibr \gnl
\gvac{1} \gmp{-} \gcl{3} \gnl
\gbr \gnl
\glm \gnl
\gvac{1} \gbr \gnl
\gvac{1} \gob{1}{N} \gob{1}{M}
\gend=
\gbeg{4}{7}
\got{1}{M} \got{1}{N} \gnl
\gcl{2} \grcm \gnl
\gvac{1} \gibr \gnl
\gbr \gcl{2} \gnl
\glm \gnl
\gvac{1} \gbr \gnl
\gvac{1} \gob{1}{N} \gob{1}{M}
\gend\stackrel{nat.}{=}
\gbeg{4}{6}
\gvac{1} \got{1}{M} \got{1}{N} \gnl
\gvac{1} \gbr \gnl
\gcn{1}{1}{3}{2} \gcn{1}{1}{3}{4} \gnl
\gvac{1} \hspace{-0,34cm} \grcm \gcn{1}{1}{1}{1} \gnl
\gcn{1}{1}{3}{3} \gvac{1} \glm \gnl
\gob{3}{N} \gob{1}{M}
\gend=\Phi^{1+}_{M, N}.
$$
This proves that the functors $\F_1:{}_H ^H\YD(\C)\to {}_H\YD(\C)^{H^{op}}$ and
$\A:{}^H\YD(\C)_{H^{cop}}\to {}_H\YD(\C)^{H^{op}}$ are isomorphisms of braided monoidal
categories. By \prref{YD-DH}, $\F: {}_{D(H)}\C \to {}_H\YD(\C)^{H^{op}}$ is also such a
functor. Then by commutativity of $\langle 1\rangle$ and $\langle 2\rangle$
in \equref{first4categories} we have four mutually isomorphic braided monoidal categories.
\par\medskip

Symmetrically as in \equref{first4categories}, we may consider:
\begin{equation}\eqlabel{other4categories}
\scalebox{0.84}{
\bfig
\putmorphism(-300,200)(1,0)[{}_{H^{cop}}\YD(\C)^H`{}^{H^{op}}\YD(\C)_H`\E]{1140}1b
\putmorphism(-460,885)(2,-1)[``]{1310}1l
\putmorphism(-480,910)(2,-1)[``\F_3]{1310}0r
\putmorphism(820,880)(0,-1)[``\F_4]{680}1r
\putmorphism(-430,880)(1,0)[\C_{D(H)}`\YD(\C)_H ^H`\S]{1220}1a
\putmorphism(-380,880)(0,-1)[``\T]{660}1l
\put(-210,430){\fbox{3}}
\put(510,630){\fbox{4}}
\efig}
\end{equation}
The functors $\S: \C_{D(H)}\to \YD(\C)_H ^H, \quad\T:\C_{D(H)}\to {}_{H^{cop}}\YD(\C)^H$ and
$\E: {}_{H^{cop}}\YD(\C)^H \to {}^{H^{op}}\YD(\C)_H$ are given by:
$$
\scalebox{0.9}[0.9]{
\gbeg{3}{4}
\got{1}{\S(M)} \gnl
\grcm \gnl
\gcl{1} \gcn{1}{1}{1}{3} \gnl
\gob{1}{\S(M)} \gob{3}{H}
\gend} = \scalebox{0.9}[0.9]{
\gbeg{7}{5}
\got{1}{M} \gnl
\gcl{2} \gdb \gnl
\grm \gmp{+} \gnl
\gcl{1} \gvac{1} \gcl{1} \gnl
\gob{1}{M} \gob{3}{H}
\gend}\hspace{-0,4cm}\textnormal{with}\qquad\quad
\scalebox{0.9}[0.9]{
\gbeg{3}{4}
\got{1}{\S^{-1}(N)} \gvac{1} \got{1}{H^*} \gnl
\gcl{1} \gcn{1}{1}{3}{1} \gnl
\grm \gnl
\gob{1}{\S^{-1}(N)}
\gend} = \scalebox{0.9}[0.9]{
\gbeg{2}{5}
\got{1}{N} \got{3}{H^*} \gnl
\grcm \gcl{2} \gnl
\gcl{2} \gmp{-} \gnl
\gvac{1} \gev \gnl
\gob{1}{N}
\gend}
$$
$$
\scalebox{0.9}[0.9]{
\gbeg{3}{4}
\got{1}{\T(M)} \gnl
\grcm \gnl
\gcl{1} \gcn{1}{1}{1}{3} \gnl
\gob{1}{\T(M)} \gob{3}{H}
\gend} = \scalebox{0.9}[0.9]{
\gbeg{7}{5}
\got{1}{M} \gnl
\gcl{2} \gdb \gnl
\grm \gmp{+} \gnl
\gcl{1} \gvac{1} \gcl{1} \gnl
\gob{1}{M} \gob{3}{H}
\gend}\hspace{-1cm}; \qquad
\scalebox{0.9}[0.9]{
\gbeg{3}{4}
\got{1}{H} \gvac{1} \got{1}{\T(M)} \gnl
\gcn{1}{1}{1}{3} \gvac{1} \gcl{1} \gnl
\gvac{1} \glm \gnl
\gvac{2} \gob{1}{\T(M)}
\gend} = \scalebox{0.9}[0.9]{
\gbeg{2}{5}
\got{1}{H} \got{1}{M} \gnl
\gbr \gnl
\gcl{1} \gmp{-} \gnl
\grm \gnl
\gob{1}{M}
\gend}
$$
and
$$\scalebox{0.9}[0.9]{
\gbeg{2}{4}\got{1}{} \got{1}{\hspace{0,3cm}\E(K)} \gnl\glcm \gnl\gcl{1} \gcl{1} \gnl
\gob{1}{\hspace{-0,14cm}H} \gob{1}{\E(K)}  \gend} = \scalebox{0.9}[0.9]{\gbeg{3}{5}
\got{1}{K} \gnl\grcm \gnl\gibr \gnl\gmp{+} \gcl{1} \gnl\gob{1}{H} \gob{1}{K}
\gend};\quad
\scalebox{0.9}[0.9]{\gbeg{3}{4}\got{1}{\E(K)} \got{3}{H} \gnl
\gcl{1} \gcn{1}{1}{3}{1} \gnl\grm \gnl\gob{1}{\E(K)} \gend} = \scalebox{0.9}[0.9]{\gbeg{4}{5}
\got{1}{K} \got{1}{H} \gnl\gcl{1} \gmp{-} \gnl\gbr \gnl\glm \gnl\gvac{1} \gob{1}{K} \gend}
\quad\textnormal{with}\quad
\scalebox{0.9}[0.9]{
\gbeg{2}{4}\got{1}{\hspace{0,3cm}\E^{-1}(L)} \gnl\grcm \gnl\gcl{1} \gcl{1} \gnl
\gob{1}{\hspace{-0,24cm}H} \gob{1}{\hspace{0,24cm}\E^{-1}(L)}  \gend} = \scalebox{0.9}[0.9]{\gbeg{3}{5}
\got{1}{} \got{1}{L} \gnl\glcm \gnl\gbr \gnl\gcl{1} \gmp{-} \gnl\gob{1}{L} \gob{1}{H}
\gend};\quad
\scalebox{0.9}[0.9]{\gbeg{3}{4}\got{1}{H} \got{3}{\E^{-1}(L)} \gnl
\gcn{1}{1}{1}{3} \gvac{1} \gcl{1} \gnl\gvac{1} \glm \gnl\gvac{1} \gob{2}{\E^{-1}(L)} \gend} =
\scalebox{0.9}[0.9]{\gbeg{4}{5}
\got{1}{H} \got{1}{L} \gnl\gmp{+} \gcl{1} \gnl\gibr \gnl\grm \gnl\gob{1}{L} \gend}
$$
(in the definitions of $\S$ and $\T$ the symbols \hspace{-0,7cm}
$\gbeg{1}{1}\got{1}{}\gdb\gnl\gob{1}{} \gob{1}{}\gend$ \hspace{0,7cm}
and \hspace{-0,7cm} $\gbeg{1}{1}\got{1}{}\gev\gnl\gob{1}{} \gob{1}{}\gend$ \hspace{0,7cm}
stand for the morphisms $d':I\to H^*\ot H$ and $e':H\ot H^*\to I$, recall \inref{r.adj. dual}).
The functors
$\F_3$ and $\F_4$ are defined so that the triangles $\langle 3\rangle$ and $\langle 4\rangle$ in
\equref{other4categories} commute. The proofs that $\S, \F_3$ and $\E$ are monoidal functors
are analogous to the corresponding proofs for the functors $\F_l, \F$ and $\A$, respectively.
Then clearly also $\T$ and $\F_4$ are monoidal. The braiding in $\C_{D(H)}$ is given by:
$$\Psi^R_{M, N}:=
\scalebox{0.9}[0.9]{
\gbeg{7}{9}
\got{1}{M} \got{1}{N} \gnl
\gcl{4} \gcl{3} \gu{1} \gvac{2} \gu{1} \gnl
\gvac{2} \gcl{1} \gdb \gcl{1} \gnl
\gvac{2} \glmptb \gnot{\hspace{-0,3cm}D(H)} \grmpt
 \glmptb \gnot{\hspace{-0,3cm}D(H)} \grmpt \gnl
\gvac{1} \gbr \gcn{1}{1}{3}{1} \gnl
\grm \grm \gnl
\gcn{1}{1}{1}{2} \gcn{1}{1}{3}{2} \gnl
\gvac{1} \hspace{-0,34cm} \gibr \gnl
\gvac{1} \gob{1}{N} \gob{2}{\hspace{-0,2cm}M}
\gend}=
\scalebox{0.9}[0.9]{
\gbeg{4}{7}
\got{1}{M} \got{1}{N} \gnl
\gcl{2} \gcl{1} \gdb \gnl
\gvac{1} \gbr \gcl{1} \gnl
\grm \grm \gnl
\gcn{1}{1}{1}{2} \gvac{1} \gcn{1}{1}{1}{0} \gnl
\gvac{1} \hspace{-0,34cm} \gibr \gnl
\gvac{1} \gob{1}{N} \gob{1}{M}
\gend}
$$
and we have:
$$\Psi^R_{M,N}=
\scalebox{0.9}[0.9]{
\gbeg{4}{7}
\got{1}{M} \got{1}{N} \gnl
\gcl{2} \gcl{1} \gdb \gnl
\gvac{1} \gbr \gcl{1} \gnl
\grm \grm \gnl
\gcn{1}{1}{1}{2} \gvac{1} \gcn{1}{1}{1}{0} \gnl
\gvac{1} \hspace{-0,34cm} \gibr \gnl
\gvac{1} \gob{1}{N} \gob{1}{M}
\gend}\stackrel{\S^{-1}}{=}
\scalebox{0.9}[0.9]{
\gbeg{6}{7}
\got{1}{M} \gvac{1} \got{1}{N} \gnl
\grcm \gcl{1} \gdb \gnl
\gcl{2} \gmp{-} \gbr \gcl{1} \gnl
\gvac{1} \gev \grm \gnl
\gcn{1}{1}{1}{3} \gvac{1} \gcn{1}{1}{3}{1} \gnl
\gvac{1} \gibr \gnl
\gvac{1} \gob{1}{N} \gob{1}{M}
\gend}=
\scalebox{0.9}[0.9]{
\gbeg{4}{7}
\got{1}{M} \gvac{1} \got{1}{N} \gnl
\grcm \gcl{1} \gnl
\gcl{3} \gibr \gnl
\gvac{1} \gcl{1} \gmp{-} \gnl
\gvac{1} \grm \gnl
\gibr \gnl
\gob{1}{N} \gob{1}{M}
\gend}\stackrel{nat.}{=}
\scalebox{0.9}[0.9]{
\gbeg{4}{7}
\got{1}{M} \got{1}{N} \gnl
\gbr \gnl
\gcl{3} \grcm \gnl
\gvac{1} \gcl{1} \gmp{-} \gnl
\gvac{1} \gibr \gnl
\grm \gcl{1} \gnl
\gob{1}{N} \gob{3}{M}
\gend}=(\Phi^R_{N,M})^{-1}
$$

$$\Psi^R_{M,N}=
\scalebox{0.9}[0.9]{
\gbeg{4}{7}
\got{1}{M} \got{1}{N} \gnl
\gcl{2} \gcl{1} \gdb \gnl
\gvac{1} \gbr \gcl{1} \gnl
\grm \grm \gnl
\gcn{1}{1}{1}{2} \gvac{1} \gcn{1}{1}{1}{0} \gnl
\gvac{1} \hspace{-0,34cm} \gibr \gnl
\gvac{1} \gob{1}{N} \gob{1}{M}
\gend}\stackrel{\T^{-1}}{=}
\scalebox{0.9}[0.9]{
\gbeg{6}{9}
\got{1}{M} \gvac{1} \got{1}{N} \gnl
\grcm \gcl{1} \gdb \gnl
\gcl{3} \gmp{-} \gbr \gcl{1} \gnl
\gvac{1} \gev \gibr \gnl
\gvac{3} \gmp{+} \gcl{1} \gnl
\gcn{2}{2}{1}{7} \gvac{1} \glm \gnl
\gvac{4} \gcl{1} \gnl
\gvac{3} \gibr \gnl
\gvac{3} \gob{1}{N} \gob{1}{M}
\gend}\stackrel{\Phi_{H, N}}{=}
\gbeg{4}{6}
\got{1}{M} \got{3}{N} \gnl
\grcm \gcn{1}{1}{1}{1} \gnl
\gcn{1}{1}{1}{1} \glm \gnl
\gcn{1}{1}{1}{2} \gcn{1}{1}{3}{2} \gnl
\gvac{1} \hspace{-0,34cm} \gibr \gnl
\gob{3}{N} \gob{1}{\hspace{-0,6cm}M}
\gend=\Phi^{2-}_{M, N}
$$
and
$$
\Phi^{2+}_{M, N}=
\gbeg{4}{6}
\got{1}{M} \got{3}{N} \gnl
\grcm \gcn{1}{1}{1}{1} \gnl
\gcn{1}{1}{1}{1} \glm \gnl
\gcn{1}{1}{1}{2} \gcn{1}{1}{3}{2} \gnl
\gvac{1} \hspace{-0,34cm} \gbr \gnl
\gob{3}{N} \gob{1}{\hspace{-0,6cm}M}
\gend\stackrel{\E^{-1}}{=}
\scalebox{0.9}[0.9]{
\gbeg{4}{9}
\gvac{1} \got{1}{M} \got{1}{N} \gnl
\glcm \gcl{4} \gnl
\gbr \gnl
\gcl{4} \gmp{-} \gnl
\gvac{1} \gmp{+} \gnl
\gvac{1} \gibr \gnl
\gvac{1} \grm \gnl
\gbr \gnl
\gob{1}{N} \gob{1}{M}
\gend}\stackrel{\Phi_{H,M}}{\stackrel{\Phi_{H,N}}{=}}
\scalebox{0.9}[0.9]{
\gbeg{4}{7}
\gvac{1} \got{1}{M} \got{1}{N} \gnl
\glcm \gcl{2} \gnl
\gibr \gnl
\gcl{2} \gbr \gnl
\gvac{1} \grm \gnl
\gbr \gnl
\gob{1}{N} \gob{1}{M}
\gend}\stackrel{nat.}{=}
\scalebox{0.9}[0.9]{
\gbeg{4}{9}
\got{1}{M} \got{1}{N} \gnl
\gbr \gnl
\gcl{3} \gcn{1}{1}{1}{3} \gnl
\gvac{1} \glcm \gnl
\gvac{1} \gibr \gnl
\gibr \gcl{1} \gnl
\gcl{1} \grm \gnl
\gbr \gnl
\gob{1}{N} \gob{1}{M}
\gend}=
\gbeg{4}{6}
\gvac{1} \got{1}{M} \got{1}{N} \gnl
\gvac{1} \gbr \gnl
\gcn{1}{1}{3}{2} \gcn{1}{1}{3}{4} \gnl
\gcn{1}{1}{2}{2} \gvac{1} \hspace{-0,34cm} \glcm \gnl
\gvac{1} \grm \gcl{1} \gnl
\gob{3}{N} \gob{1}{M}
\gend=\Phi^{4+}_{M, N}.
$$
(The braiding $\Phi^{2+}_{M, N}$ is the one from \rmref{lr-Hcop-cat}.) This proves that the functors
$\S, 
\T$ 
and $\E$ 
respect the braidings.

Note that our result that $\E:{}_{H^{cop}}\YD(\C)^H\to {}^{H^{op}}\YD(\C)_H$ is an
isomorphisms of braided monoidal categories generalizes \cite[Lemma 3.5.2]{Besp}, where the braided
monoidal isomorphism functor ${}^{H^{op}}\YD(\C)_H\to ({}_{H^{cop}}\YD(\C)^H)^{op,cop}$ is given if
$\C$ has right duals. It sends an object from the source category to its dual object.
\par\medskip

Finally, let us 
record that we do not find any braided monoidal functor which would connect the two groups of categories
from \equref{first4categories} and \equref{other4categories}.
At the end of \ssref{mixing} we showed that a natural candidate $\ch'$
for a monoidal functor from ${}_H\YD(\C)^{H^{op}}$ to ${}_{H^{cop}}\YD(\C)^H$ is not a braided functor.
Likewise, at the end of \ssref{left-left} we showed that $\Ll: {}_H ^H\YD(\C)\to \YD(\C)^H_H$ is a
monoidal but not a braided functor. In the relation (3.5.1) after \cite[Corollary 3.5.5]{Besp} two
(mutually isomorphic) isomorphism functors $\G_1, \G_2: \YD(\C)_H ^H \to {}_H ^H\YD(\C)$ are given.
For $M\in \YD(\C)_H ^H$ with right module and comodule structure morphisms $\mu$ and $\rho$ respectively,
the functors $\G_1$ and $\G_2$ are defined by
$\G_1(M, \nu, \rho)=(M, \mu_1=\nu\Phi^{-1}(S^{-1}\ot M), \lambda_1=(S\ot M)\Phi\rho)$ and
$\G_2(M, \nu, \rho)=(M, \mu_2=\nu\Phi(S\ot M), \lambda_2=(S^{-1}\ot M)\Phi^{-1}\rho)$, respectively.
Here $\mu_i$ and $\lambda_i$ denote the left module and comodule structure morphisms of $\G_i(M)=M$,
respectively, for $i=1,2$.
That these functors are well-defined one can check directly applying \equref{right YD-oth}. However,
that they are not monoidal we can see even when $\C=Vec$, the category of vector spaces. Let us see this for $\G_1$:
$h \tr (m\ot n)=(m\ot n)\tl S^{-1}(h) =m\tl S^{-1}(h_{(2)}) \ot n\tl S^{-1}(h_{(1)}) = h_{(2)}\tr m \ot h_{(1)}\tr n$,
which shows that $\G_1$ restricts to a monoidal functor $\M_H \to {}_{H^{cop}}\M$.
Moreover, a direct check shows that if $\Phi_{H,M}$ is symmetric for any $M\in \YD(\C)_H ^H$, the
functor $\G_1$ is a braided monoidal isomorphism $\YD(\C)_H ^H \to {}_{H^{cop}} ^{H^{op}}\YD(\C)$,
where ${}_{H^{cop}} ^{H^{op}}\YD(\C)$ is a braided monoidal category with braiding:
$$
\scalebox{0.9}[0.9]{
\gbeg{4}{6}
\gvac{1} \got{1}{M} \got{1}{N} \gnl
\gvac{1} \gbr \gnl
\glcm \gcl{2} \gnl
\gbr \gnl
\gcl{1} \glm \gnl
\gob{1}{N} \gvac{1} \gob{1}{M.}
\gend}
$$
Thus, we can complete \equref{other4categories}, and symmetrically \equref{first4categories},
into commutative diagrams of isomorphic braided monoidal categories:
$$
\scalebox{0.84}{
\bfig
\putmorphism(-300,-160)(1,0)[{}_H\YD(\C)^{H^{op}}`{}^H\YD(\C)_{H^{cop}}`]{1000}1b
\putmorphism(650,330)(0,-1)[``]{480}1r
\putmorphism(-430,330)(1,0)[{}_{D(H)}\C`{}_H ^H\YD(\C)`]{1080}1a
\putmorphism(-350,330)(0,-1)[``]{460}1l
\putmorphism(-460,-160)(2,-1)[` `]{640}1b
\putmorphism(-570,-150)(2,-1)[`\YD(\C)^{H^{op}}_{H^{cop}}`]{700}0b
\putmorphism(330,-340)(2,1)[``]{90}1b
\efig}
\qquad\textnormal{ and}\qquad
\scalebox{0.84}{
\bfig
\putmorphism(-300,-160)(1,0)[{}_{H^{cop}}\YD(\C)^H`{}^{H^{op}}\YD(\C)_H`]{1000}1b
\putmorphism(620,330)(0,-1)[``]{480}1r
\putmorphism(-430,330)(1,0)[\C_{D(H)}`\YD(\C)_H ^H`]{1080}1a
\putmorphism(-380,330)(0,-1)[``]{460}1l
\putmorphism(-460,-160)(2,-1)[` `]{640}1b
\putmorphism(-570,-150)(2,-1)[` {}_{H^{cop}}^{H^{op}}\YD(\C)`]{700}0b
\putmorphism(330,-340)(2,1)[``]{90}1b
\efig}
$$
\par\bigskip
\par\bigskip

There are further monoidal isomorphisms for YD-categories. In \cite[Lemma 3.5.6]{Besp}
there is given a monoidal isomorphism ${}_H\YD(\C)^{H^{op}} \to {}_H\YD(\C)_A$, where $A$ is a
further bialgebra with a bialgebra pairing $\ro: H\ot A\to I$. Here the latter is a
monoidal category without any symmetricity conditions, but the former requires some. On the other hand,
we checked that there is a monoidal isomorphism ${}_H^H\YD(\C) \to {}_{H, A^{cop}}\YD(\C)$, where the
latter does require some symmetricity conditions whereas the former does not. The objects $M$ of
${}_{H, A^{cop}}\YD(\C)$ satisfy the condition:
$$
\gbeg{5}{10}
\got{2}{A} \got{2}{H} \got{1}{M} \gnl
\gcmu \gcmu \gcl{7} \gnl
\gcl{1} \glmpt \gnot{\hspace{-0,4cm}\ro} \grmpt  \gcl{1} \gnl
\gcn{1}{1}{1}{3} \gcn{1}{1}{5}{3}  \gnl
\gvac{1} \gbr \gnl
\gvac{1} \gcl{1} \gcn{1}{1}{1}{3} \gnl
\gvac{1} \gcn{1}{2}{1}{5} \gvac{1} \glm \gnl
\gvac{4} \gcl{1} \gnl
\gvac{3} \glm \gnl
\gvac{3} \gob{3}{M}
\gend=
\gbeg{5}{10}
\got{2}{A} \got{2}{H} \got{1}{M} \gnl
\gcmu \gcmu \gcl{1} \gnl
\gcl{1} \gcl{1} \gcl{1} \gbr \gnl
\gcl{1} \gcn{1}{1}{1}{3} \glm \gcl{2} \gnl
\gcn{1}{2}{1}{5} \gvac{1} \glm \gnl
\gvac{3} \gbr \gnl
\gvac{2} \glmpt \gnot{\hspace{-0,4cm}\ro} \grmpt \gcl{3} \gnl
\gvac{3} \gob{3}{M}
\gend
$$
where $\ro:A\ot H\to I$ is a bialgebra pairing. This is another example of the apeearance that
a (braided) monoidal isomorphism functor from a YD-category in $\C$ necessarily requires that the braiding
in $\C$ be symmetric between $H$ and any object of the category.

\section{Center construction} \selabel{center}
\setcounter{equation}{0}

The center construction for monoidal categories has been introduced independently by Drinfel'd 
\footnote{Private communication to Majid in response to the preprint of \cite{Maj1}, February 1990.}
and Joyal and Street \cite{JS}. It consists of assigning a braided monoidal category
called {\em center of $\C$} to a monoidal category $\C$. We will differ the left $\Z_l(\C)$ and
the right $\Z_r(\C)$ center of $\C$. We recall here the definition of the (right) center from
\cite[Definition XIII.4.1]{K}.

\begin{propdefn}
For a monoidal category $\C$ the objects of $\Z_r(\C)$ are pairs $(V, c_{-, V})$ with
$V\in\C$, where $c_{-, V}$ is a family of natural isomorphisms
$c_{X, V}: X\ot V\to V\ot X$ for $X\in\C$ such that for all
$Y\in\C$ it is
\begin{eqnarray}\eqlabel{braid-rel}
c_{X\ot Y, V}=(c_{X, V}\ot Y)(X\ot c_{Y, V}).
\end{eqnarray}
A morphism between $(V, c_{-, V})$ and $(W, c_{-, W})$ is a morphism
$f:V\to W$ in $\C$ such that for all $X\in\C$ it is
\begin{eqnarray}\eqlabel{c-morf}
(f\ot X)c_{X, V}=c_{X, W}(X\ot f).
\end{eqnarray}
The identity morphism in $\C$ is the identity morphism in $\Z_r(\C)$ and the composition
of two morphisms in $\C$ is a morphisms in $\Z_r(\C)$. Thus $\Z_r(\C)$ is a category,
called {\em the right center of $\C$}.
\end{propdefn}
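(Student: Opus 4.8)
The statement asserts that the data described assemble into a category, so the plan is simply to verify the three category axioms for $\Z_r(\C)$: that every object carries an identity morphism lying in $\Z_r(\C)$, that the composite of two $\Z_r(\C)$-morphisms is again a $\Z_r(\C)$-morphism, and that associativity and the unit laws then come for free from $\C$.

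First I would check that $\Id_V$ defines a morphism $(V, c_{-,V}) \to (V, c_{-,V})$ in $\Z_r(\C)$: for each $X \in \C$ one has $(\Id_V \ot X)\, c_{X,V} = c_{X,V} = c_{X,V}\,(X \ot \Id_V)$, which is exactly condition \equref{c-morf}.

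The main (and only genuinely computational) step is stability under composition. Given morphisms $f \colon (V, c_{-,V}) \to (W, c_{-,W})$ and $g \colon (W, c_{-,W}) \to (U, c_{-,U})$ in $\Z_r(\C)$, I would compute, for every $X \in \C$,
\begin{align*}
((g \comp f) \ot X)\, c_{X,V}
&= (g \ot X)\big((f \ot X)\, c_{X,V}\big)
= (g \ot X)\big(c_{X,W}\,(X \ot f)\big) \\
&= c_{X,U}\,(X \ot g)\,(X \ot f)
= c_{X,U}\,\big(X \ot (g \comp f)\big),
\end{align*}
where I have used bifunctoriality of $\ot$ and applied \equref{c-morf} first to $f$ and then to $g$; hence $g \comp f$ is again a morphism in $\Z_r(\C)$.

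Finally, since identities and composition in $\Z_r(\C)$ are literally those of $\C$, the associativity of composition and the two unit laws hold in $\Z_r(\C)$ because they hold in $\C$. I do not expect any obstacle here: condition \equref{braid-rel} on objects plays no role in this verification — it is needed only afterwards, when $\Z_r(\C)$ is equipped with its tensor product, its associativity and unit constraints, and its braiding (for which \equref{braid-rel} is precisely the compatibility of the half-braiding $c_{-,V}$ with tensor products, and from which $c_{I,V} = \Id_V$ follows by idempotence together with invertibility of $c_{I,V}$).
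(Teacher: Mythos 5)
Your verification is correct and is the standard (essentially the only) argument: the identity check, the closure of $\Z_r(\C)$-morphisms under composition via bifunctoriality of $\ot$ together with two applications of \equref{c-morf}, and the inheritance of associativity and unit laws from $\C$. The paper itself gives no proof, simply recalling the definition from Kassel, so there is nothing to contrast; your side remark that \equref{braid-rel} is irrelevant to the category axioms and only enters when the monoidal and braided structure is installed is also accurate.
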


From the definition it is clear that $c_{-, -}$ is a transformation natural in both arguments.
In \cite[Theorem XIII.4.2]{K} it is proved that $\Z_r(\C)$ is a braided monoidal category. The unit
object is $(I, \Id)$, the tensor product of $(V, c_{-, V})$ and $(W, c_{-, W})$ is
$(V\ot W, c_{-, V\ot W})$, where $c_{X, V\ot W}: X\ot V\ot W\to V\ot W\ot X$ is a morphism
in $\C$ defined for all $X\in\C$ by
\begin{eqnarray}\eqlabel{c-monoid}
c_{X, V\ot W}=(V\ot c_{X, W})(c_{X, V}\ot W).
\end{eqnarray}
The braiding in $\Z_r(\C)$ is given by:
$$c_{V, W}: (V, c_{-, V}) \ot (W, c_{-, W}) \to (W, c_{-, W})\ot (V, c_{-, V}).$$

The left center $\Z_l(\C)$ of $\C$ is defined analogously -- an object in $\Z_l(\C)$
has the form $(V, c_{V, -})$ with $V\in\C$.
\par\medskip



For a Hopf algebra $H$ over a field the left center of the category of left modules over $H$ is
isomorphic to ${}_H ^H\YD$ 
\cite[Example 1.3]{Maj4}, 
and the right center of the category of left modules over $H$ is isomorphic to ${}_H \YD^H$
\cite[Theorem XIII.5.1]{K}. 
Generalizing these results to a braided monoidal category $\C$, Bespalov indicated in
\cite[Proposition 3.6.1]{Besp} that 
${}_H ^H\YD(\C)$ is isomorphic as a braided monoidal category to a subcategory $\Z_l^{\C}({}_H\C)$
of the (left) center of ${}_H\C$. The condition that the objects $(V, c_{V, -})$ of $\Z_l^{\C}({}_H\C)$
fulfill is that for every $X\in\C$ with trivial $H$-action (via the counit)
the morphism $c_{V, X}$ coincides with the braiding $\Phi_{V, X}$ in $\C$.
In other words, with the forgetful functor $\U: {}_H\C \to \C$ one has that $c_{V, \U(X)}=\Phi_{V, \U(X)}$
for every $X\in {}_H\C$. For completeness we present below the proof for an analogous statement.

\begin{prop} \prlabel{Besp helped}
The categories $\Z^{\C}_r({}_H\C)$ and ${}_H\YD(\C)^{H^{op}}$ are isomorphic as braided
monoidal categories.
\end{prop}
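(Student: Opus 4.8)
The plan is to construct a pair of mutually inverse functors
$$\Phi\colon {}_H\YD(\C)^{H^{op}} \longrightarrow \Z^{\C}_r({}_H\C), \qquad \Psi\colon \Z^{\C}_r({}_H\C)\longrightarrow {}_H\YD(\C)^{H^{op}},$$
acting as the identity on underlying objects of $\C$ and on morphisms, and then to verify that they are inverse braided monoidal isomorphisms. Concretely, given a left-right YD-module $M$ (so $M$ is a left $H$-module and right $H$-comodule satisfying the compatibility from the Remark before \prref{prva cat}), I keep the $H$-action on $M$ and define, for every $X\in{}_H\C$, a natural isomorphism $c_{X,M}\colon X\ot M\to M\ot X$ by
$$c_{X,M}=(\mu_M\ot X)(H\ot\Phi_{X,H}^{-1})(H\ot X\ot\text{(part of }\rho_M))\cdots$$
— more precisely, $c_{X,M}$ should be built from $\rho_M$, the braiding $\Phi$ and the $H$-action on $X$, mimicking the braiding $\Phi^{1+}$ of \prref{prva cat} but with the generic object $X$ in place of a YD-module $N$; the point is that on $X$ only the $H$-action (not a coaction) is used. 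Conversely, given $(V,c_{-,V})\in\Z^{\C}_r({}_H\C)$, I equip $V=\Psi(V)$ with the given $H$-action and with the right $H$-coaction obtained by feeding $c_{H,V}$ (where $H$ carries its regular left action) against the unit, i.e. $\rho_V:=(V\ot\varepsilon\text{-type contraction})\circ c_{H,V}\circ(\eta_H\ot V)$-style formula, exactly as one recovers a coaction from a half-braiding in the classical center$=$YD correspondence.

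The verification proceeds in the following steps, in order. First, check $\Phi$ is well defined: the hexagon-type relation \equref{braid-rel} for $c_{-,M}$ follows from coassociativity of $\rho_M$ and the module axiom on $X$; naturality of $c_{X,M}$ in $X$ follows from naturality of $\Phi$; and crucially $c_{X,M}$ must be $H$-linear as a morphism in ${}_H\C$ with the diagonal actions — this is where the left-right YD-compatibility condition \equref{YD-mix*} is consumed, and is essentially the computation already done in \prref{prva cat} for the $H$-linearity of $\Phi^*$ with $N$ replaced by a general module $X$. Also, $c_{V,\U(X)}=\Phi_{V,\U(X)}$ when $X$ has trivial action, because the trivial action collapses the formula to the bare braiding; this places $\Phi(M)$ in the subcategory $\Z^{\C}_r$. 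Second, check $\Psi$ is well defined: that $\rho_V$ is coassociative and counital follows from \equref{braid-rel} and naturality applied to the multiplication and unit of $H$; that $\rho_V$ is an $H$-comodule structure for which $(V,\mu_V,\rho_V)$ satisfies \equref{YD-mix*} follows by reversing the computation of step one, again using \equref{braid-rel} and the defining condition $c_{V,\U(X)}=\Phi$; and that $\rho_V$ lands in $H^{op}$-comodules (i.e. the codiagonal on tensor products uses $\nabla_{H^{op}}$) is forced by the way half-braidings compose via \equref{c-monoid}. Third, check $\Phi\Psi=\Id$ and $\Psi\Phi=\Id$: one direction is immediate since the action is untouched and the coaction is reconstructed tautologically; the other uses \equref{dbev=id}, \equref{evdb=id} and the explicit formula for $c_{X,M}$ to recover $c$ from $\rho$, i.e. evaluating the reconstructed half-braiding at a general $X$ gives back the original, which is a diagram chase with duals (here the assumption that $H$ is finite and that $\Phi_{H,M}$ is symmetric for all YD-modules $M$ is used, matching the hypotheses of \prref{prva cat} and \prref{YD-DH}).

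Finally, I verify compatibility with the monoidal and braided structures. Monoidality: the tensor product in $\Z^{\C}_r({}_H\C)$ uses \equref{c-monoid}, which under $\Psi$ becomes precisely the codiagonal $H^{op}$-coaction on $M\ot N$ used in \prref{prva cat} — this is a direct comparison of the two formulas, using that $H$ is transparent so that $\nabla_{H^{op}}=\nabla_H\Phi_{H,H}^{\pm 1}$ and the ambient braiding interact correctly. The unit object $(I,\Id)$ corresponds to the trivial YD-module, and associativity/unit constraints are inherited from $\C$ since both functors act as the identity on objects. Braiding: the braiding in $\Z^{\C}_r({}_H\C)$ is the half-braiding $c_{V,W}$ itself, and by construction $\Phi$ sends the YD-braiding $\Phi^{1+}_{M,N}$ of \prref{prva cat} to exactly $c_{N,M}$ evaluated with $N$ regarded as an object of ${}_H\C$ — so the braidings match on the nose. \textbf{The main obstacle} I anticipate is the well-definedness of $\Psi$, specifically proving that the coaction $\rho_V$ reconstructed from a half-braiding automatically satisfies the YD-mix condition \equref{YD-mix*} \emph{and} is an $H^{op}$- rather than $H$-comodule for the tensor structure; this requires carefully tracking where the inverse braiding $\Phi_{H,H}^{-1}$ enters and genuinely uses the transparency hypothesis $\Phi_{H,X}=\Phi_{X,H}^{-1}$, so it is the step where the symmetricity assumption cannot be avoided and the diagrammatic bookkeeping is heaviest. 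The rest is, modulo patience with braided string diagrams, a reorganization of computations already present in \prref{prva cat}.
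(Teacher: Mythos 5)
Your proposal is correct and follows essentially the same route as the paper: the coaction is recovered as $c_{H,V}(\eta_H\ot V)$, the half-braiding is rebuilt from the coaction as $\Phi^{1+}_{-,M}$, the YD-compatibility corresponds to $H$-linearity of the half-braiding, and monoidality and braiding compatibility come from \equref{c-monoid} and the subcategory condition $c_{V,\U(X)}=\Phi_{V,\U(X)}$. The only cosmetic quibbles are that no counit contraction is needed in defining $\rho_V$ (the composite $c_{H,V}(\eta_H\ot V)$ already has type $V\to V\ot H$), and that finiteness of $H$ and the dual-basis identities \equref{dbev=id}--\equref{evdb=id} play no role in this proposition.
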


\begin{proof}
First of all, note that for $(V, c_{-, V})\in\Z^{\C}_r({}_H\C)$ we have:
\begin{equation} \eqlabel{moj uslov}
c_{H,V}\stackrel{nat.}{=}
\gbeg{3}{5}
\gvac{1} \got{1}{H} \got{1}{V} \gnl
\gu{1} \gcl{1} \gcl{1} \gnl
\glmptb \gcmptb \gnot{\hspace{-0,76cm} c_{H\ot H, V}} \grmptb \gnl
\gcl{1} \gmu \gnl
\gob{1}{V} \gob{2}{H}
\gend\stackrel{\equref{braid-rel}}{=}
\gbeg{3}{5}
\gvac{1} \got{1}{H} \got{1}{V} \gnl
\gu{1} \glmptb \gnot{\hspace{-0,36cm} c_{H, V}} \grmptb \gnl
\glmptb \gnot{\hspace{-0,36cm} c_{H, V}} \grmptb \gcl{1} \gnl
\gcl{1} \gmu \gnl
\gob{1}{V} \gob{2}{H}
\gend=
\gbeg{3}{5}
\gvac{1} \got{1}{H} \got{1}{V} \gnl
\gu{1} \gbr \gnl
\glmptb \gnot{\hspace{-0,36cm} c_{H, V}} \grmptb \gcl{1} \gnl
\gcl{1} \gmu \gnl
\gob{1}{V} \gob{2}{H.}
\gend
\end{equation}
The morphism $\rho:=c_{H, V}(\eta_H\ot V):V\to V\ot H$
defines a right $H$-comodule structure on $V$:
$$
\gbeg{4}{5}
\got{1}{V} \gnl
\grcm \gnl
\gcl{1} \gcn{2}{2}{1}{5} \gnl
\grcm \gnl
\gob{1}{V} \gob{1}{H} \gob{3}{H}
\gend=
\gbeg{4}{6}
\got{1}{} \got{1}{} \got{1}{V} \gnl
\gvac{1} \gu{1} \gcl{1} \gnl
\gvac{1} \glmptb \gnot{\hspace{-0,32cm} c_{H, V}} \grmptb \gnl
\gu{1} \gcl{1} \gcl{2} \gnl
\glmptb \gnot{\hspace{-0,32cm} c_{H, V}} \grmptb \gnl
\gob{1}{V} \gob{1}{H} \gob{1}{H}
\gend\stackrel{\equref{braid-rel}}{=}
\gbeg{3}{6}
\got{1}{} \got{1}{} \got{1}{V} \gnl
\gu{1} \gu{1} \gcl{1} \gnl
\glmptb \gcmptb \gnot{\hspace{-0,76cm} c_{H\ot H, V}} \grmptb \gnl
\gcl{2} \gcl{2} \gcl{2} \gnl
\gob{1}{V} \gob{1}{H} \gob{1}{H}
\gend=
\gbeg{4}{6}
\got{2}{} \got{2}{V} \gnl
\gvac{1} \gu{1} \gcn{1}{2}{2}{2} \gnl
\gvac{1} \hspace{-0,36cm} \gcmu \gnl
\gvac{1} \glmptb \gcmptb \gnot{\hspace{-0,76cm} c_{H\ot H, V}} \grmptb \gnl
\gvac{1} \gcl{1} \gcl{1} \gcl{1} \gnl
\gvac{1} \gob{1}{V} \gob{1}{H} \gob{1}{H}
\gend\stackrel{nat.}{=}
\gbeg{3}{6}
\got{1}{} \got{1}{V} \gnl
\gu{1} \gcl{1} \gnl
\glmptb \gnot{\hspace{-0,36cm} c_{H, V}} \grmptb \gnl
\gcl{2} \gcn{1}{1}{1}{2} \gnl
\gvac{1} \gcmu \gnl
\gob{1}{V} \gob{1}{H} \gob{1}{H}
\gend=
\gbeg{4}{5}
\got{1}{V} \gnl
\grcm \gnl
\gcl{2} \gcn{1}{1}{1}{2} \gnl
\gvac{1} \gcmu \gnl
\gob{1}{V} \gob{1}{H} \gob{1}{H.}
\gend
$$
The counit property follows from $c_{I,V}=id_V$ (see \equref{braid-rel}).
With this $H$-comodule and the existing $H$-module structure $V$ is a left-right YD-module:
$$
\gbeg{4}{9}
\got{3}{H} \got{1}{V} \gnl
\gwcm{3} \gcl{1} \gnl
\gcn{1}{2}{1}{3} \gvac{1} \glm \gnl
\gvac{1} \gcn{1}{1}{5}{3} \gnl
\gvac{1} \gbr \gnl
\gcn{1}{1}{3}{1} \gcn{1}{1}{3}{5} \gnl
\grcm \gcn{1}{1}{3}{3} \gnl
\gcl{1} \gwmu{3} \gnl
\gob{1}{V} \gob{3}{H}
\gend=
\gbeg{4}{9}
\got{3}{H} \got{1}{V} \gnl
\gwcm{3} \gcl{1} \gnl
\gcn{1}{2}{1}{3} \gvac{1} \glm \gnl
\gvac{1} \gcn{1}{1}{5}{3} \gnl
\gvac{1} \gbr \gnl
\gu{1} \gcl{1} \gcn{1}{1}{1}{3} \gnl
\glmptb \gnot{\hspace{-0,36cm} c_{H, V}} \grmptb \gcn{1}{1}{3}{3} \gnl
\gcl{1} \gwmu{3} \gnl
\gob{1}{V} \gob{3}{H}
\gend\stackrel{\equref{moj uslov}}{=}
\gbeg{4}{7}
\got{2}{H} \got{1}{V} \gnl
\gcmu \gcl{1} \gnl
\gcl{1} \glm \gnl
\gcn{1}{1}{1}{3} \gvac{1} \gcl{1} \gnl
\gvac{1} \glmptb \gnot{\hspace{-0,34cm} c_{H, V}} \grmptb \gnl
\gvac{1} \gcl{1} \gcl{1} \gnl
\gvac{1} \gob{1}{V} \gob{1}{H}
\gend=
\gbeg{4}{7}
\got{2}{H} \got{3}{V} \gnl
\gcmu \gu{1} \gcl{1} \gnl
\gcl{1} \gbr \gcl{1} \gnl
\gmu \glm \gnl
\gcn{1}{1}{2}{3} \gcn{1}{1}{5}{3} \gnl
\gvac{1} \glmptb \gnot{\hspace{-0,34cm} c_{H, V}} \grmptb \gnl
\gvac{1} \gob{1}{V} \gob{1}{H}
\gend\stackrel{c_{H, V}\in {}_H\C}{=}
\gbeg{4}{7}
\got{2}{H} \got{3}{V} \gnl
\gcn{1}{2}{2}{2} \gvac{1} \gu{1} \gcl{1} \gnl
\gvac{2} \glmptb \gnot{\hspace{-0,34cm} c_{H, V}} \grmptb \gnl
\gcmu \gcl{1} \gcl{1} \gnl
\gcl{1} \gbr \gcl{1} \gnl
\glm \gmu \gnl
\gob{3}{V} \gob{1}{\hspace{-0,32cm}H}
\gend=
\gbeg{3}{7}
\got{2}{H} \got{1}{V} \gnl
\gcn{1}{1}{2}{2} \gcn{1}{1}{3}{3} \gnl
\gcmu \grcm \gnl
\gcl{1} \gbr \gcl{1} \gnl
\glm \gmu \gnl
\gcn{1}{1}{3}{3} \gcn{1}{1}{4}{4} \gnl
\gob{3}{V} \gob{1}{\hspace{-0,26cm}H.}
\gend
$$
A morphism $f:V\to W$ in $\Z^{\C}_r({}_H\C)$ becomes a morphism of left-right YD-modules
-- it is right $H$-colinear because of \equref{c-morf}. This defines a functor $\K$ from
$\Z^{\C}_r({}_H\C)$ to the category of left-right YD-modules. We now prove that
$\K:\Z^{\C}_r({}_H\C)\to {}_H\YD(\C)^{H^{op}}$ is monoidal. Let $(V, c_{-, V})$
and $(W, c_{-, W})$ be in $\Z^{\C}_r({}_H\C)$. Then we have:
$$
\gbeg{4}{5}
\got{1}{\K(V\ot W)} \gnl
\grcm \gnl
\gcl{2} \gcn{2}{2}{1}{5} \gnl \gnl
\gob{1}{\K(V\ot W)} \gob{5}{H}
\gend=
\gbeg{4}{5}
\got{1}{} \got{3}{V\ot W} \gnl
\gu{1} \gvac{1} \gcl{1} \gnl
\glmptb \gcmp \gnot{\hspace{-0,76cm} c_{H, V\ot W}} \grmptb \gnl
\gcl{1} \gvac{1} \gcl{1} \gnl
\gob{1}{V\ot W} \gob{3}{H}
\gend\stackrel{\equref{c-monoid}}{=}
\gbeg{4}{6}
\got{1}{} \got{1}{V} \got{1}{W} \gnl
\gu{1} \gcl{1} \gcl{2} \gnl
\glmptb \gnot{\hspace{-0,32cm} c_{H, V}} \grmptb \gnl
\gcl{1} \glmptb \gnot{\hspace{-0,32cm} c_{H, W}} \grmptb \gnl
\gcl{1} \gcl{1} \gcl{1} \gnl
\gob{1}{V} \gob{1}{W} \gob{1}{H}
\gend\stackrel{\equref{moj uslov}}{=}
\gbeg{4}{8}
\got{1}{} \got{1}{V} \got{1}{W} \gnl
\gu{1} \gcl{1} \gcl{2} \gnl
\glmptb \gnot{\hspace{-0,32cm} c_{H, V}} \grmptb \gnl
\gcl{1} \gcn{1}{1}{1}{3} \gcn{1}{1}{1}{3} \gnl
\gcl{1} \gu{1} \gbr \gnl
\gcl{1} \glmptb \gnot{\hspace{-0,36cm} c_{H, W}} \grmptb \gcl{1} \gnl
\gcl{1} \gcl{1} \gmu \gnl
\gob{1}{V} \gob{1}{W} \gob{2}{H}
\gend\stackrel{nat.}{=}
\gbeg{4}{7}
\got{1}{} \got{1}{V} \got{3}{W} \gnl
\gu{1} \gcl{1} \gu{1} \gcl{1} \gnl
\glmptb \gnot{\hspace{-0,32cm} c_{H, V}} \grmptb \glmptb \gnot{\hspace{-0,32cm} c_{H, W}} \grmptb \gnl
\gcl{1} \gbr \gcl{1} \gnl
\gcl{2} \gcl{2} \gbr \gnl
\gvac{2} \gmu \gnl
\gob{1}{V} \gob{1}{W} \gob{2}{H}
\gend=
\gbeg{5}{7}
\gvac{1} \got{1}{\K(V)} \got{1}{} \got{1}{\K(W)} \gnl
\gvac{1} \gcl{1} \gvac{1} \gcl{1} \gnl
\gvac{1} \grcm \grcm \gnl
\gvac{1} \gcl{1} \gbr \gcl{1} \gnl
\gvac{1} \gcl{1} \gcl{2} \gbr \gnl
\gcn{1}{1}{3}{1} \gvac{2} \gmu \gnl
\gob{1}{\K(V)} \gob{2}{\hspace{0,32cm}\K(W)} \gob{2}{H}
\gend
$$
If $(V, c_{-, V})\in\Z^{\C}_r({}_H\C)$, then $\Phi^{1+}_{-, V}=c_{-, V}$ because of
\equref{moj uslov}. On the other hand, for $M\in {}_H\YD(\C)^{H^{op}}$its
comodule structure morphism is obviously equal to $\Phi^{1+}_{H, M}(\eta_H\ot M)$. Hence
the inverse functor of $\K$ is given by sending a YD-module $M$ into the
pair $(M, \Phi^{1+}_{-, M})$. 
Consequently, $\K$ respects the braiding and this finishes the proof.
\qed\end{proof}

Similarly, one may prove 
that the following categories are braided monoidally isomorphic:
$$\Z^{\C}_l({}_H\C)\iso {}_H ^H\YD(\C)\iso \Z^{\C}_r({}^H\C),
\qquad \qquad \Z^{\C}_r(\C_H)\iso \YD(\C)_H ^H\iso \Z^{\C}_l(\C^H)$$
\vspace{0,1cm}
$${}^H\YD(\C)_{H^{cop}}\iso \Z^{\C}_l({}^H\C),
\qquad {}_{H^{cop}}\YD(\C)^H\iso \Z^{\C}_r(\C^H), \qquad
{}^{H^{op}}\YD(\C)_H\iso \Z^{\C}_l(\C_H).$$
The above center subcategories are defined analogously to $\Z^{\C}_r({}_H\C)$. Adding to this list the
categories ${}_{(H^{op})^*\bowtie H}\C$ and $\C_{(H^{op})^*\bowtie H}$, we may identify
\begin{eqnarray}\eqlabel{center-double}
{}_{(H^{op})^*\bowtie H}\C \iso\Z^{\C}_l({}_H\C)\quad\textnormal{and}\quad\C_{(H^{op})^*\bowtie H}\iso \Z^{\C}_r(\C_H)
\end{eqnarray}
having in mind that the corresponding $H$-module structures remain unchanged by the isomorphism functors.
Then due to \equref{first4categories} and \equref{other4categories} we obtain the following
diagrams of isomorphic braided monoidal categories:
$$
\scalebox{0.84}{
\bfig
\putmorphism(-400,-130)(1,0)[\Z^{\C}_r({}_H\C)`\Z^{\C}_l({}^H\C)`]{1000}1b
\putmorphism(630,330)(0,-1)[``]{430}1r
\putmorphism(-430,330)(1,0)[\Z^{\C}_l({}_H\C)`\Z^{\C}_r({}^H\C)`]{1030}1a
\putmorphism(-380,320)(0,-1)[``]{420}1l
\efig}
\qquad\quad\textnormal{ and}\qquad
\scalebox{0.84}{
\bfig
\putmorphism(-400,-140)(1,0)[\Z^{\C}_r(\C^H)`\Z^{\C}_l(\C_H).`]{1020}1b
\putmorphism(660,330)(0,-1)[``]{450}1r
\putmorphism(-380,330)(1,0)[\Z^{\C}_r(\C_H)`\Z^{\C}_l(\C^H)`]{1000}1a
\putmorphism(-350,330)(0,-1)[``]{440}1l
\efig}
$$
\par\bigskip

\subsection{Transparency and M\"uger's centers $\Z_1$ and $\Z_2$}

Throughout the paper we have used the condition that $\Phi_{H, M}$ is symmetric for every $M\in\C$.
This means that {\em $H$ is transparent in $\C$} in terms of \cite{Al1}, or that $H$ belongs to M\"uger's
center $\Z_2(\C)=\{X\in\C \vert \Phi_{Y,X}\Phi_{X,Y}=id_{X\ot Y} \hspace{0,2cm}\textnormal{for all}
\hspace{0,2cm} Y\in\C\}$, \cite[Definition 2.9]{M}.
Note that due to \leref{H - H* transp.}, $H$ is transparent if and only if so is $H^*$.
The center of a monoidal category $\D$ that we studied above is denoted by $\Z_1(\D)$ in \cite{M}
(neglecting the difference between the left and the right center). Then we may state:

\begin{prop}
Let $H$ be a finite Hopf algebra with a bijective antipode in a braided monoidal category $\C$.
If $H\in\Z_2(\C)$, then there are embeddings of braided monoidal categories:
$${}_H ^H\YD(\C)\iso {}_{D(H)}\C \hookrightarrow \Z_{1, l}({}_H\C) \qquad
\textnormal{and}\qquad \YD(\C)_H ^H\iso \C_{D(H)} \hookrightarrow \Z_{1, r}(\C_H).$$
\end{prop}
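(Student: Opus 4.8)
The plan is to assemble identifications that have already been obtained in the paper, so the argument will be short. First I would unwind the hypothesis: saying $H\in\Z_2(\C)$ means exactly that $\Phi_{H,M}$ is symmetric for every $M\in\C$; by \leref{H - H* transp.} this forces $\Phi_{H^*,M}$ to be symmetric as well, and by \leref{Zhang lemma} all three of $\Phi_{H,H}$, $\Phi_{H,H^*}$, $\Phi_{H^*,H^*}$ are symmetric. Since $H$ is moreover finite with bijective antipode, the Drinfel'd double $D(H)$ is defined, and every symmetricity requirement occurring in \prref{YD-DH}, in \prref{Besp helped} (and in its $\Z^{\C}_l$-analogue recorded just after it), and in the constructions of the functors $\F_l$ and $\S$ is automatically in force.

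The two claimed isomorphisms are then immediate. By the commutative diagram \equref{first4categories}, the categories ${}_{D(H)}\C$ and ${}_H ^H\YD(\C)$ (together with ${}_H\YD(\C)^{H^{op}}$ and ${}^H\YD(\C)_{H^{cop}}$) are mutually isomorphic as braided monoidal categories, the isomorphism ${}_{D(H)}\C\iso{}_H ^H\YD(\C)$ being the functor $\F_l$; symmetrically, \equref{other4categories} gives $\C_{D(H)}\iso\YD(\C)_H ^H$ via $\S$. Moreover \equref{center-double} --- which is obtained by combining \prref{YD-DH}, \prref{Besp helped} and their analogues --- already records the braided monoidal isomorphisms ${}_{D(H)}\C\iso\Z^{\C}_l({}_H\C)$ and $\C_{D(H)}\iso\Z^{\C}_r(\C_H)$, where $\Z^{\C}_l({}_H\C)$ (resp.\ $\Z^{\C}_r(\C_H)$) is the subcategory of the left (resp.\ right) centre of ${}_H\C$ (resp.\ $\C_H$) cut out by the condition that the half-braiding against any object with trivial $H$-action agree with the ambient braiding $\Phi$.

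It remains only to observe that $\Z^{\C}_l({}_H\C)$ is a \emph{full braided monoidal subcategory} of the full centre $\Z_{1,l}({}_H\C)=\Z_l({}_H\C)$, so that the inclusion $\Z^{\C}_l({}_H\C)\hookrightarrow\Z_{1,l}({}_H\C)$ is a fully faithful braided monoidal functor, i.e.\ an embedding of braided monoidal categories. The unit $(I,\Id)$ plainly satisfies the defining condition, and if $(V,c_{V,-})$ and $(W,c_{W,-})$ do too, then so does their tensor product: using the tensor-product half-braiding formula of the centre (the left-center analogue of \equref{c-monoid}) together with the hexagon identity for $\Phi$ in $\C$ one gets $c_{V\ot W,X}=\Phi_{V\ot W,X}$ for every $X$ with trivial action. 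Composing ${}_H ^H\YD(\C)\iso{}_{D(H)}\C\iso\Z^{\C}_l({}_H\C)$ with this inclusion yields the left-hand embedding, and running the identical argument with $\C_H$ in place of ${}_H\C$ and the right centre in place of the left one yields $\YD(\C)_H ^H\iso\C_{D(H)}\hookrightarrow\Z_{1,r}(\C_H)$. I do not expect a genuine obstacle here: all the substantial content was proved earlier, and what remains is the small stability check just described together with careful bookkeeping of the left/right and op/cop conventions.
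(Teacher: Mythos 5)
Your argument is correct and follows exactly the route the paper intends: the proposition is stated as a direct consequence of \prref{YD-DH}, \prref{Besp helped} and its left-center analogue, \equref{center-double}, and the diagrams \equref{first4categories} and \equref{other4categories}, with the hypothesis $H\in\Z_2(\C)$ (via \leref{H - H* transp.} and \leref{Zhang lemma}) supplying all the symmetricity conditions those results require. Your explicit check that $\Z^{\C}_l({}_H\C)$ is a full braided monoidal subcategory of $\Z_{1,l}({}_H\C)$ (closure of the defining condition under tensor product via the hexagon) is a detail the paper leaves implicit, but it is the right and only remaining verification.
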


\par\bigskip

\subsection{The whole center category and the coend}

The center category of a monoidal category $\C$ is a particular case of the Pontryagin dual monoidal
category introduced by Majid in \cite[Section 3]{Maj1}. For $\C$ braided, rigid and
cocomplete from \cite[Theorem 3.2]{Maj1a} one deduces that there is an isomorphism of monoidal categories:
\begin{eqnarray}\eqlabel{Majid}
\Z_l(\C)\iso\C_{\Aut(\C)}
\end{eqnarray}
where
$$
\Aut(\C)\iso \int^X X^*\ot X
$$
is the coend in $\C$. It has a structure of a bialgebra in $\C$ and if $\C$ is rigid, it is a
Hopf algebra. 
As we observed in \seref{YD-DH}, if $H$ is a quasitriangular Hopf algebra such that $\Phi_{H, M}$ is
symmetric for all $M\in\C$, i.e. $H\in\Z_2(\C)$, then the whole category
$\C_H$ is braided. Thus for $\C$ rigid $\Aut(\C_H)$ becomes a Hopf algebra in $\C_H$
and according to \prref{cross prod isom} the categories $(\C_H)_{\Aut(\C_H)}$ and
$\C_{H\ltimes\Aut(\C_H)}$ are monoidally isomorphic.
By the identity \equref{Majid} we then have:

\begin{prop}
Let $\C$ be a rigid braided monoidal category and $H\in\C$ a quasitriangular Hopf algebra such that
$H\in\Z_2(\C)$. There is a monoidal isomorphism of categories:
$$\Z_l(\C_H)\iso\C_{H\ltimes\Aut(\C_H)}.$$
\end{prop}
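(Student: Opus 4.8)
The plan is to deduce the statement by specialising Majid's description \equref{Majid} of the center of a rigid braided cocomplete category to $\C_H$, and then feeding the result into the bosonization isomorphism of \prref{cross prod isom}.

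First I would make sure that $\C_H$ is itself a braided, rigid (and cocomplete) monoidal category, so that \equref{Majid} applies with $\C$ replaced by $\C_H$. Braidedness: since $H\in\Z_2(\C)$ the morphism $\Phi_{H,M}$ is symmetric for every $M\in\C$, and as observed in \seref{YD-DH} just after \prref{YD-DH} this makes $\Delta^{op}:=\Phi_{H,H}\Delta_H$ an opposite comultiplication for $H$ with respect to all of $\C_H$, i.e. $\Oo(H,\Delta^{op})=\C_H$; hence the quasitriangular structure $\R$ of $H$ induces a braiding $\Phi(\R)$ on the whole category $\C_H$. Rigidity: for $M\in\C_H$ the dual $M^*$ formed in $\C$ carries a right $H$-action built from the (bijective) antipode and the braiding, and the evaluation and dual basis of $\C$ turn out to be right $H$-linear, so $M^*$ is a dual of $M$ in $\C_H$ and $\C_H$ is rigid. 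Cocompleteness: the forgetful functor $\U:\C_H\to\C$ creates colimits (since $-\ot H$ preserves them, $\ot$ being adjoint on each side in a rigid category), so $\C_H$ is cocomplete whenever $\C$ is. Consequently the coend $\Aut(\C_H)\iso\int^{M\in\C_H}M^*\ot M$ exists, is a Hopf algebra in $\C_H$, and \equref{Majid} yields a monoidal isomorphism $\Z_l(\C_H)\iso(\C_H)_{\Aut(\C_H)}$.

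The remaining step is an immediate application of \prref{cross prod isom} with $B=\Aut(\C_H)$: this $B$ is a Hopf algebra in the braided category $\C_H$, and the hypothesis that $\Phi_{H,M}$ be symmetric for all $M\in\C_H$ holds because $H\in\Z_2(\C)$. Hence $H\ltimes\Aut(\C_H)$ is a Hopf algebra in $\C$ and $\C_{H\ltimes\Aut(\C_H)}\iso(\C_H)_{\Aut(\C_H)}$ as monoidal categories. Composing the two isomorphisms,
$$\Z_l(\C_H)\iso(\C_H)_{\Aut(\C_H)}\iso\C_{H\ltimes\Aut(\C_H)},$$
which is the assertion. The main obstacle is precisely the middle part of the second paragraph: one must check carefully that $\C_H$ genuinely is a rigid braided (cocomplete) category — in particular that the left and right $H$-module duals agree, which is where the bijectivity of the antipode of $H$ standing throughout the paper is used, and that the coend $\Aut(\C_H)$ is formed inside $\C_H$ rather than in $\C$. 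Once this is secured, the statement is a formal concatenation of \equref{Majid} and \prref{cross prod isom}.
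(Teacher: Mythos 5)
Your proposal follows exactly the paper's route: apply Majid's isomorphism \equref{Majid} to the braided rigid (cocomplete) category $\C_H$ to get $\Z_l(\C_H)\iso(\C_H)_{\Aut(\C_H)}$, then compose with the bosonization isomorphism of \prref{cross prod isom}. The extra care you take in verifying that $\C_H$ inherits rigidity, braidedness and cocompleteness is a welcome elaboration of steps the paper leaves implicit, but it is the same argument.
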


\par\medskip

When $\C=Vec$ and $H$ is a finite-dimensional quasitriangular Hopf algebra, $\Aut(\M_H)=H^*$ as a
vector space with a modified multiplication, \cite{Maj1a}, and the above yields $\Z_l(\M_H)\iso
\M_{D'(H)}$, where $D'(H)=H\bowtie H^{* op}$ is a version of the Drinfel'd double.
Symmetrically to \equref{Majid} one has $\Z_r(\C)\iso {}_{\Aut(\C)}\C$. For $H\in\Z_2(\C)$
this yields the monoidal isomorphism $\Z_r({}_H\C)\iso {}_{\Aut({}_H\C)\rtimes H}\C$. Here
$\Aut({}_H\C)\rtimes H$ is the bosonization of the braided Hopf algebra $\Aut({}_H\C)$ in ${}_H\C$.
\par\bigskip

Another approach to the center construction of monoidal categories and the Drinfel'd double uses
monads \cite{Al}. Assume  $T$ is a Hopf monad in a rigid monoidal category $\C$ for which the
coend $C_T(X)=\int^{Y\in\C} T(Y)^*\ot X\ot Y$ exists for every $X\in\C$. The authors construct a
quasitriangular Hopf monad $\Dd_T$, called {\em the double of $T$}, and prove the braided
monoidal isomorphism ${}_{\Dd_T}\C\iso\Z({}_T\C)$, \cite[Theorem 6.5]{Al}. Relying on monads,
this construction generalizes the Drinfel'd double to a fully non-braided setting. In the particular
case when a Hopf monad is associated to a Hopf algebra $H$ in a rigid braided monoidal category $\C$,
the underlying object of the double $\Dd_H$ is $H\ot H^*\ot \Aut(\C)$,
assuming that $\C$ admits the coend, e.g. $\C$ is cocomplete. (When $\C=Vec$, one recovers the
usual Drinfel'd double.) 
In this case one has the braided monoidal isomorphisms (\cite[Theorem 8.13]{Al}):
\begin{eqnarray}\eqlabel{Alain}
\Z_l(\C_H)\iso\C_{\Dd_H}\iso {}_{\Dd_H}\C\iso\Z_r({}_H\C).
\end{eqnarray} 
To prove the isomorphism between the left and the right 
hand-side categories one applies identifications with objects in $\C^{cop}$. Moreover, the
isomorphism in the middle is possible since $\Dd_H$ is quasitriangular.
For $H=I$ the trivial Hopf algebra, it is $\Dd_I=\Aut(\C)$ and one recovers \equref{Majid}. On the
other hand, observe that for $H=I$ the center subcategory becomes $\Z_r^{\C}({}_I\C)\iso\C$.

We point out that the notions of a quasitriangular structure in \cite{Maj1a} and \cite{Al} differ. In the
latter case an R-matrix for a Hopf algebra $H\in\C$ is a morphism $\r:C\ot C\to H\ot H$ defined in such a way
that $H$ is quasitriangular if and only if the category of $H$-modules in $\C$ is braided. The R-matrix that
Majid uses \cite[Definition 1.3]{Maj2} (and which we apply) is a morphism $\R:I\to H\ot H$ obtained by
straightforward extension of the axioms in the classical case. Its existence implies that the {\em subcategory}
$\Oo(H, \Delta^{op})$ of the category of $H$-modules in $\C$ is braided. 
Though, both constructions recover the classical notion of a quasitriangular structure for the
category of vector spaces (in this case the coend is just the field).

\section{Particular cases and examples} \selabel{center}
\setcounter{equation}{0}

When a Hopf algebra $H\in\C$ is commutative or/and cocommutative, the symmetricity condition on
$\Phi_{H, X}$ for any $X\in\C$ that emerges throughout the paper obtains a certain interpretation.

\begin{prop} \cite[Proposition 3.12]{CF1}  \prlabel{braid-lin}
Let $H\in\C$ be a Hopf algebra.
\begin{enumerate}
\item[(i)] The braiding $\Phi$ of $\C$ is left $H$-linear if and only if $\Phi_{H, X}=\Phi^{-1}_{X, H}$ for
any $X\in\C$ and $H$ is cocommutative.

\item[(ii)] The braiding $\Phi$ of $\C$ is left $H$-colinear if and only if $\Phi_{H, X}=\Phi^{-1}_{X, H}$ for any $X\in\C$ and $H$ is commutative.
\end{enumerate}
\end{prop}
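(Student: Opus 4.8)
\textbf{Proof strategy for \prref{braid-lin}.}

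The plan is to prove the two statements by purely diagrammatic computations, using the defining relations for modules, comodules, and the antipode, together with naturality of the braiding. I would treat part (i) in detail and obtain part (ii) by a completely symmetric argument (replacing actions by coactions, multiplication by comultiplication, and ``commutative'' by ``cocommutative'' throughout), or alternatively by passing to the Hopf algebra $H$ in the mirror-reversed category where the roles of module/comodule are interchanged. Since both directions of each equivalence are needed, I would organize the work as: (a) left $H$-linearity of $\Phi$ implies cocommutativity of $H$ and $\Phi_{H,X}=\Phi_{X,H}^{-1}$ for all $X$; (b) the converse.

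For direction (a), suppose $\Phi$ is left $H$-linear, meaning that for all left $H$-modules $M,N$ the braiding $\Phi_{M,N}\colon M\ot N\to N\ot M$ is a morphism of $H$-modules, where $M\ot N$ carries the diagonal action. First I would feed this into a convenient test object. Taking $M=H$ (regular action) and $N=X$ with trivial action via the counit forces, after using the counit axiom, the relation that expresses $\Phi_{H,X}$ commuting with the left regular action — which, combined with the unit, should pin down that $\Phi_{H,X}$ is determined and in fact symmetric, i.e. $\Phi_{H,X}\Phi_{X,H}=\id$; this is the same manipulation appearing in \equref{moj uslov} in the center-construction section, so I can reuse that bookkeeping. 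To extract cocommutativity, I would instead test $H$-linearity on $M=H$ and $N=H$ with the regular action on both: $H$-linearity of $\Phi_{H,H}$ against the diagonal action, together with the fact that $\Delta$ is an algebra map and the antipode axioms, should collapse to $\Phi_{H,H}\Delta=\Delta$, i.e. cocommutativity. The key move is sliding a multiplication node through a braiding using naturality, then using $\nabla(\Id\ot\eta)=\Id$ to kill redundant strands.

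For the converse (b), assume $\Phi_{H,X}=\Phi_{X,H}^{-1}$ for all $X$ and $H$ cocommutative; I must show $\Phi_{M,N}$ is left $H$-linear for arbitrary $H$-modules $M,N$. Starting from the left-hand side $\Phi_{M,N}\circ(\text{diagonal action of }H\text{ on }M\ot N)$, I would use naturality of $\Phi$ in both variables to pull the two $H$-input strands around the braiding, producing an expression with $\Phi_{H,M}$ and $\Phi_{H,N}$ appearing. Then the symmetricity hypothesis $\Phi_{H,-}=\Phi_{-,H}^{-1}$ lets me cancel a braiding against its inverse on one side, and cocommutativity of $H$ ($\Phi_{H,H}\Delta=\Delta$, equivalently $\Delta$ is unchanged under swapping the two outputs) lets me match the two copies of the comultiplication that arise when the single $H$ acts diagonally. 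Reassembling yields exactly the diagonal action on $N\ot M$ composed with $\Phi_{M,N}$, which is $H$-linearity.

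\textbf{Main obstacle.} The nontrivial point is direction (a): extracting \emph{cocommutativity} of $H$ (as opposed to just the symmetricity relation) from $H$-linearity of $\Phi$. The symmetricity relation $\Phi_{H,X}=\Phi_{X,H}^{-1}$ falls out almost formally from naturality and the unit/counit, but forcing $\Phi_{H,H}\Delta=\Delta$ requires choosing the right test module and carefully using that the regular representation is faithful in the appropriate categorical sense — concretely, one applies the $H$-linearity identity to $H\ot H$, precomposes with $\eta\ot\eta$ on suitable inputs, and uses the antipode to solve for $\Delta$. Getting the diagram manipulations to actually isolate $\Delta$ on both sides (rather than some twisted version of it) is where the care is needed; everything else is routine naturality and module-axiom bookkeeping. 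I would cite \cite[Proposition 3.12]{CF1} for the full computational details rather than reproduce every braided diagram.
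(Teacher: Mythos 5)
The paper does not actually prove this proposition; it is quoted verbatim from \cite[Proposition 3.12]{CF1}, so there is no in-text argument to compare yours with. Your overall architecture is the natural one: prove (i) by testing $H$-linearity of $\Phi$ on well-chosen modules for the forward direction and by a naturality/hexagon computation for the converse, and obtain (ii) dually. The cocommutativity extraction (test $\Phi_{H,H}$ on $H^{\mathrm{reg}}\ot H^{\mathrm{reg}}$ and precompose with $H\ot\eta\ot\eta$, giving $\Phi_{H,H}\Delta=\Delta$) and the converse direction are both sound as sketched; note that the antipode is not needed anywhere, only the (co)unit axioms and naturality.

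However, the step you propose for extracting the symmetricity relation would fail as written. With $M=H^{\mathrm{reg}}$ and $N=X^{\mathrm{triv}}$, the diagonal actions reduce via the counit to left multiplication on the $H$-factor on both $H\ot X$ and $X\ot H$, and $H$-linearity of $\Phi_{H,X}$ becomes
$$\Phi_{H,X}(\nabla\ot X)=(X\ot\nabla)(\Phi_{H,X}\ot H)(H\ot\Phi_{H,X}),$$
which is exactly the hexagon identity $\Phi_{H\ot H,X}=(\Phi_{H,X}\ot H)(H\ot\Phi_{H,X})$ combined with naturality of $\Phi_{-,X}$ with respect to $\nabla$; it holds for every braiding and forces nothing. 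Likewise \equref{moj uslov} only expresses a half-braiding through its value at the unit and is contentless when $c=\Phi$. The test must be taken in the opposite order: put $M=X^{\mathrm{triv}}$, $N=H^{\mathrm{reg}}$, so that on the source $X\ot H$ the acting copy of $H$ must first braid past $X$ (the diagonal action there is $(X\ot\nabla)(\Phi_{H,X}\ot H)$), whereas on the target $H\ot X$ it multiplies directly. Writing out $H$-linearity of $\Phi_{X,H}$, using naturality with respect to $\nabla$, and precomposing with $H\ot X\ot\eta$ then yields $\Phi_{X,H}\Phi_{H,X}=\id_{H\ot X}$, hence $\Phi_{H,X}=\Phi_{X,H}^{-1}$ since braidings are invertible. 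With this correction the strategy goes through.
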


On the other hand, if the braiding $\Phi$ of $\C$ is left $H$-linear, then the category $_H\C$
is braided monoidal with the same braiding $\Phi$. Similarly, if $\Phi$ is left $H$-colinear, then the
category $^H\C$ is braided monoidal with the braiding $\Phi$.
\par\medskip

We illustrate the above cases by an example.
The following family of Hopf algebras was studied in \cite[Section 4]{N}. Let $n,m$ be
natural numbers, $k$ a field such that $char(k) \nmid 2m$ and $\omega$ a $2m$-th primitive root of unity.
For $i=1,...,n$ choose $1 \leq d_i < 2m$ odd numbers and set $d^{\leq n}=(d_1,...,d_n)$. Then
$$H(m,n,d^{\leq n})=k\langle g,x_1,...,x_n\vert g^{2m}=1, x_i^2=0, gx_i=\omega^{d_i} x_ig, x_ix_j=-x_jx_i \rangle$$
is a Hopf algebra, where $g$ is group-like and $x_i$ is a $(g^m, 1)$-primitive element, that is,
$\Delta(x_i)=1\ot x_i+x_i\ot g^m$ and $\Epsilon(x_i)=0$. The antipode is given by $S(g)=g^{-1}$ and
$S(x_i)=-x_ig^{m}$. We proved in \cite{CF1} that $H(m,n,d^{\leq n})$ decomposes as the Radford biproduct
(indeed a bosonization):
\begin{equation} \eqlabel{ex-biproduct}
H(m,n,d^{\leq n})\iso B\rtimes H(m,n-1,d^{\leq n-1})
\end{equation}
where the braided Hopf algebra is the exterior algebra $B=K[x_n]/(x_n^2)$. The isomorphism is given by:
$G \mapsto 1\times g, X_i \mapsto 1 \times x_i, X_n \mapsto x_n \times g^m$. We have that $B$
is a module over $H=H(m,n-1,d^{\leq n-1})$ by the action $g\cdot x_n=\omega^{d_n}x_n$ and $x_i \cdot x_n=0$
for $i=1,...,n-1$. It becomes a commutative and cocommutative Hopf algebra in $_H\M$ with $x_n$ being a
primitive element, i.e., $\Delta_B(x_n)=1\ot x_n+x_n\ot 1,\ \Epsilon_B(x_n)=0$ and $S_B(x_n)=-x_n.$
The Hopf algebra $H(m,n,d^{\leq n})$ is quasitriangular with the family of quasitriangular structures
\cite[(6.4) on p. 69]{CF1}:
\begin{eqnarray} \label{qtr-Hnu}
\R_s^n=\frac{1}{2m}\Big(\sum_{j,t=0}^{2m-1}\omega^{-jt}g^j \otimes g^{st}\Big)
\end{eqnarray}
where $0\leq s< 2m$ is such that $sd_i \equiv m \ (mod.\ 2m)$ for every $i=1,...,n$. Moreover, $\R_s^n$
is triangular if and only if $s=m$.
As it is well known (\cite{Maj}), every left $H$-module $M$ belongs to ${\ }^H_H\YD$ with the coaction
\begin{eqnarray} \eqlabel{leftcomMaj}
\lambda(m)=\R^{(2)}\ot \R^{(1)} m, \quad m\in M
\end{eqnarray}
- we denote $\R=\R_s^{n-1}$ for brevity -
and $({}_H\M, \Phi_{\R})$ can be seen as a braided monoidal subcategory of $({\ }^H_H\YD, \Phi^L)$.
Here $\Phi^L$ is given by \equref{braid-LR}, that is $\Phi^L(x\ot y)=x_{[-1]}\cdot y\ot x_{[0]}$,
whereas $\Phi_{\R}$ and its inverse are given by:
\begin{equation} \eqlabel{braid-Hnd}
\Phi_{\R}(x\ot y)=\R^{(2)} y\ot\R^{(1)} x; \qquad \Phi_{\R}^{-1}(x\ot y)=\R^{(1)} y\ot S^{-1}(\R^{(2)}) x.
\end{equation}
Thus $B$ becomes a Hopf algebra in $({\ }^H_H\YD, \Phi^L)$.
\par\medskip

Set $\C={\ }^H_H\YD$. Let us now prove that $\Phi^L_{B,M}$ is symmetric for any $M$ in $\C$.
Take $m\in M$ and let us check if $\Phi^L(b\ot m)=(\Phi^L)^{-1}(b\ot m)$ (see \equref{braid-Hnd} and
(\ref{qtr-Hnu})). For $b=1$ the computation is easier, we compute here the case $b=x_n$. We find:
$$\begin{array}{rl}
\Phi_{\R}(x_n\ot m)\hskip-1em&=
\frac{1}{2m}\Big(\sum_{j,t=0}^{2m-1}\omega^{-jt}g^{st} \cdot m \ot g^j\cdot x_n\Big) \\
&=\frac{1}{2m}\Big(\sum_{j,t=0}^{2m-1}\omega^{-jt}g^{st} \cdot m \ot \omega^{d_n j} x_n\Big) \\
&=\frac{1}{2m}\Big(\sum_{t=0}^{2m-1}[\sum_{j=0}^{2m-1}(\omega^{d_n-t})^j] g^{st} \cdot m\Big) \ot x_n \\
&=g^{sd_n}\cdot m\ot x_n
\end{array}$$
(the sum in the bracket in the penultimate expression is different from $0$ only for $j=-sd_n\ (mod.\ 2m)$,
when it equals $2m$). Similarly, it is:
$$\begin{array}{rl}
\Phi_{\R}^{-1}(x_n\ot m)\hskip-1em&=\frac{1}{2m}\Big(\sum_{j,t=0}^{2m-1}\omega^{-jt}g^j \cdot m \ot
 S^{-1}(g^{st})\cdot x_n\Big) \\
&=\frac{1}{2m}\Big(\sum_{j,t=0}^{2m-1}\omega^{-jt}g^j \cdot m \ot
 g^{-st}\cdot x_n\Big) \\
&=\frac{1}{2m}\Big(\sum_{j,t=0}^{2m-1}\omega^{-jt}g^j \cdot m \ot
 \omega^{-d_n st} x_n\Big) \\
&=\frac{1}{2m}\Big(\sum_{j=0}^{2m-1}[\sum_{t=0}^{2m-1}(\omega^{-(j+s d_n)})^t] g^j \cdot m\Big) \ot x_n \\
&=g^{-sd_n}\cdot m\ot x_n.
\end{array}$$
Recall that 
$sd_i \equiv m \ (mod.\ 2m)$ for every $i=1,...,n$. Hence $g^{sd_n}=-1$ and the two expressions we
computed above are equal. Thus the wanted symmetricity condition is fulfilled for the described family
of Hopf algebras.
\par\medskip

This together with the fact that $B$ is both commutative and cocommutative in $\C$ means due to
\prref{braid-lin} that $\Phi^L$ is $B$-linear and $B$-colinear. Hence $_B\C$ and $^B\C$ are braided by
$\Phi^L$. Actually, we have more.
In \equref{ex-biproduct} the quasitriangular structure $\R$ extends from $H$ to $B\rtimes H$.
The extension is given by $\crta{\R}=(\iota\ot \iota)\R$, where $\iota: H \to B\rtimes H$ is the
Hopf algebra embedding. Consequently, the braiding $\Phi_{\R}$ in $_H\M$
- which determines simultaneously the braiding in $\C$ - extends to the braiding $\Phi_{\crta\R}$ in
$_{B\rtimes H}\M$ - which determines the braiding in ${}_{B\rtimes H} ^{B\rtimes H}\YD$. In other words, the
braiding in $\C$ extends to the braiding in ${}_{B\rtimes H} ^{B\rtimes H}\YD\iso {}_B ^B\YD(\C)$
(extension by trivial $B$-(co)actions). The latter braided monoidal isomorphism is due to
the left version of \cite[Proposition 4.2.3]{Besp}.

\par\bigskip

\end{document}